\renewcommand{\geq}{\geqslant}
\renewcommand{\leq}{\leqslant}
\newtheorem{theorem}{Theorem}
\newtheorem{lemma}[theorem]{Lemma}
\newtheorem{cor}[theorem]{Corollary}
\newtheorem*{cor*}{Corollary}
\begin{document}

\title{$PD_3$-pairs with compressible boundary}

\author{Jonathan A. Hillman }
\address{School of Mathematics and Statistics\\
     University of Sydney, NSW 2006\\
      Australia }

\email{jonathan.hillman@sydney.edu.au}

\begin{abstract}
We extend work of Turaev and Bleile to relax the $\pi_1$-injectivity hypothesis
in the characterization of the fundamental triples of 
$PD_3$-pairs with all boundary components aspherical. 
This is further extended to pairs $(P,\partial{P})$ which also have 
spherical boundary components and with $c.d.\pi_1(P)\leq2$.
\end{abstract}

\keywords{Algebraic Loop Theorem, free factor, $PD_3$-pair, peripheral system}

\subjclass{57P10}

\maketitle

A {\it $PD_n $-complex} is a space homotopy equivalent to a cell complex 
which satisfies Poincar\'e duality of formal dimension $n$ with local coefficients. 
Such spaces model the homotopy types of $n$-manifolds.
The lowest dimension in which there are $PD_n$-complexes which 
are not homotopy equivalent to closed $n$-manifolds is $n=3$.
The homotopy type of a $PD_3$-complex $P$ is determined by 
$\pi=\pi_1(P)$,
the orientation character $w=w_1(P)$ and the image $\mu$ 
of the fundamental class in $H_3(\pi;\mathbb{Z}^w)$ \cite{He}.
Turaev formulated and proved a Realization Theorem, 
characterizing the triples $[\pi,w,\mu]$ which arise in this way,
in terms of homotopy equivalences of modules.
He also gave a new proof of Hendriks' Classification Theorem, 
and used these results to establish Splitting and 
Unique Factorization Theorems parallel to those known for 3-manifolds 
\cite{Tu}.

$PD_3$-pairs model the homotopy types of 3-manifolds with boundary.
The basic invariant is again a fundamental triple $[(\pi,\{\kappa_j\}),w,\mu]$,
where the fundamental group data now includes
the homomorphisms induced by the inclusions of the boundary components.
These triples satisfy an extension of the Turaev condition,
and also a boundary compatibility condition, 
that $w$ restricts to the orientation characters of the boundary components,
and the image of $\mu$ under the connecting homomorphism 
(for an appropriate long exact sequence of homology) 
is a fundamental class for the boundary.
Bleile extended the Classification and Realization Theorems 
to $PD_3$-pairs with aspherical boundary components.
Her version of the Realization Theorem requires that the homomorphisms 
$\kappa_j$ be injective.
(For a 3-manifold this corresponds to having incompressible boundary.)
She also gave two Decomposition Theorems, 
corresponding to interior and boundary connected sums, respectively \cite{Bl}.

We shall show that in the orientable case the $\pi_1$-injectivity 
restriction may be replaced by necessary conditions imposed by 
the Algebraic Loop Theorem of Crisp,
which asserts that the kernel of the map on fundamental groups induced by the inclusion of a boundary component $Y$ of a $PD_3$-pair is 
the normal closure of finitely many disjoint essential simple closed curves 
on $Y$ \cite{Cr07}.
Our Realization Theorem (Theorem \ref{asph boundary} below) asserts that  
a fundamental triple is realizable by a $PD_3$-pair with aspherical boundary 
if and only if it satisfies the Turaev and boundary compatibility conditions 
and the conclusion of the Algebraic Loop Theorem,
provided that $\pi$ has a large enough free factor
and has no orientation-reversing element of order 2.
(The condition on free factors is known to be necessary for
3-manifold pairs.)
The key ideas are a criterion for recognizing free factors and the inductive application of the arguments of the Decomposition Theorems of \cite{Bl}.

We expect that orientable pairs $(P,\partial{P})$ with aspherical boundary
should be  interior
connected sums of $PD_3$-complexes and pairs $(P_i,\partial{P}_i)$ 
with $P_i$ aspherical.
This expectation holds after stabilization by connected sums with copies of $S^2\times{S^1}$,
and would hold unreservedly if we could establish an inequality 
suggested by Lemma \ref{min gen} below.
(The need for stabilization arises from the difference between 
the algebraic and topological  Loop Theorems, as presently understood. 
See \S6 below.)
Much of the argument applies also to non-orientable pairs,
at least if there is no orientation-reversing element of order 2 in $\pi_1(P)$.

The Realization Theorem extends immediately to allow some 
$S^2$ boundary components, 
since capping off spheres does not change the fundamental group, 
and the fundamental class extends uniquely to the resulting pair 
with aspherical boundary components.
The fundamental triple remains a complete invariant when $\pi$ has cohomological dimension $\leq2$.
(This is so when $\pi$ is torsion free and the pair has no summand 
which is an aspherical $PD_3$-complex.)
In particular, $PD_3$-pairs with free fundamental group 
are homotopy equivalent to 3-manifolds with boundary.
However in the remaining cases we appear to need also a $k$-invariant.
Beyond this, there remains the issue of classification and realization 
of $PD_3$-pairs with $RP^2$ boundary components. 
This seems just out of reach for the moment.

We present the main definitions and summarize the results of Bleile, 
Crisp and Turaev in \S1.
The next three sections contain preparatory steps towards the main result.
In \S2 we define the end module $R(\pi)$ and in \S3 we estimate 
the rank of the maximal free factor of $\pi$ in terms of the peripheral data.
This is applied  in conjunction with a criterion of Dunwoody \cite[Corollary IV.5.3]{DD}.
The next section presents two notions of connected sum 
and the related notion of adding a handle, 
and states the results of Bleile that we shall use inductively in this and 
the next section.
We also prove a key lemma.
This is the one point in this paper in which the Turaev condition is invoked explicitly,
and our argument is modelled in part on that of \cite[\S5]{Bl}.
The next section \S5 contains the main result (Theorem \ref{asph boundary}),
and the following short section has several examples indicating the independence of the hypotheses.
In \S7 we show that there are only a handful of non-orientable
$PD_3$-pairs with $\pi$ indecomposable and virtually free.
The remaining sections consider pairs whose boundary is not aspherical.
In \S8 we give a Classification Theorem for pairs with no $RP^2$
boundary components,
and in \S9 we sketch a difficulty in extending this result further.
In \S10 we settle the cases with $\pi$ finite
and in the final section we consider briefly the role 
of $\pi$ alone in determining the pair.

\section{necessary conditions}

Let $(P,\partial{P})$ be a $PD_3$-pair, 
and let $w=w_1(P):\pi=\pi_1(P)\to\mathbb{Z}^\times$ be the orientation character.
Then $H_3(P;\partial{P};\mathbb{Z}^w)\cong\mathbb{Z}$,
and cap product with a fundamental class $[P,\partial{P}]\in
{H_3(P;\partial{P};\mathbb{Z}^w)}$ induces isomorphisms
from $H^i(P;\overline{\mathcal{R}})$ to $H_{3-i}(P,\partial{P};{\mathcal{R}})$
for any right $\mathbb{Z}[\pi]$-module $\mathcal{R}$.
(Here $\overline{\mathcal{R}}$ is the left module obtained from 
$\mathcal{R}$ via the $w$-twisted involution,
with $\pi$-action given by $g.r=w(g)r.g^{-1}$ for $r\in\mathcal{R}$ 
and $g\in\pi$.)
We shall reserve the term ``$PD_3$-complex" for $PD_3$-pairs 
with empty boundary.
It is a standard consequence of Poincar\'e duality and the long exact sequence
of homology for the pair that $\chi(\partial{P})=2\chi(P)$.

We shall assume always that the  {\it ambient\/} space $P$ is connected, 
but in general the {\it boundary\/} $\partial{P}$ may have several components.
Each component of $\partial{P}$ is a $PD_2$-complex with 
orientation character the restriction of $w$, and the choice of
a fundamental class $[P,\partial{P}]$ determines fundamental classes 
for the boundary components whose sum is the image of $[P,\partial{P}]$ 
in $H_2(\partial{P};\mathbb{Z}^w)$. 
We may assume that each component of $\partial{P}$ 
is a closed, connected 2-manifold with a collar neighbourhood.
(This can always be arranged, by a mapping cylinder construction,
since $PD_2$-complexes are homotopy equivalent to closed surfaces.)
Let $\partial\widetilde{P}$ be the preimage of $\partial{P}$
 in the universal cover $\widetilde{P}$.
 
Let $\partial{P}=\amalg_{j\in{J}}{Y_j}$ and let
$\kappa_j:\pi_1(Y_j)\to\pi$ be the homomorphism induced by inclusion, 
and let $B_j=\mathrm{Im}(\kappa_j)$ be the corresponding {\it peripheral subgroup},
for all $j\in{J}$.
(We include the trivial homomorphisms corresponding to $S^2$ boundary 
components, to record such components.
However, in \S3--\S7 we shall assume that all the boundary components are aspherical.)
Since we must choose paths connecting  each boundary component 
to the base-point for $P$,
the homomorphisms $\kappa_j$ are only well-defined up to conjugacy.
We shall assume that a fixed choice is made, when necessary.
The {\it peripheral system} $\{\kappa_j|j\in{J}\}$ is $\pi_1$-{\it injective}
if $\kappa_j$ is injective for all $j\in{J}$,
while if the subgroups $B_j$ are all torsion free then $(P,\partial{P})$
is {\it peripherally torsion free}.

It is easy to see that if $P$ is aspherical and $\pi=1$ then 
$(P,\partial{P})\simeq(D^3,S^2)$. 
The pair $(P,\partial{P})$ is {\it aspherical\/} if $P$ is aspherical and 
$\pi\not=1$. 
It has {\it aspherical boundary\/} if every component 
of $\partial{P}$ is aspherical.
Aspherical $PD_3$-pairs have aspherical boundary \cite[Lemma 3.1]{Hi}.

We shall say that the orientation character $w$ {\it splits\/} if $w(g)=-1$ 
for some $g\in\pi$ such that $g^2=1$.
Pairs for which $w$ does not split are peripherally torsion free,
but the converse is false.
For instance, $P=RP^2\times{S^1}$ has no boundary, 
and so is peripherally torsion free,
but the inclusion of $RP^2\times\{*\}$ splits $w_1(P)$.
Let $(P^+,\partial{P}^+)$ be the orientable covering pair
with fundamental group $\pi^+=\mathrm{Ker}(w)$.

The {\it fundamental triple\/} of the pair is $[(\pi,\{\kappa_j\}),w,\mu]$,
where $\mu$ is the image of $[P,\partial{P}]$ 
in $H_3(\pi,\{\kappa_j\};\mathbb{Z}^w)$.
The {\it Classification Theorem\/} asserts that two $PD_3$-pairs with aspherical boundary are homotopy equivalent if and only if 
their fundamental triples are isomorphic \cite[Theorem 1.1]{Bl}.

There are three types of condition which are necessary for a triple 
to be realized by a $PD_3$-pair: 
those imposed by the Algebraic Loop Theorem,
the boundary compatibilities of fundamental classes, 
and the Turaev condition on the fundamental class.
We shall outline these conditions (and describe the relative homology group
$H_3(\pi,\{\kappa_j\};\mathbb{Z}^w)$) in the following paragraphs. 

The {\it Algebraic Loop Theorem\/} asserts that if $(P,\partial{P})$ is a $PD_3$-pair and $Y$ is an aspherical boundary component then 
there is a finite maximal family $Ess(Y)$ of free homotopy classes of 
disjoint essential simple closed curves on $Y$ which are 
each null-homotopic in $P$ \cite{Cr07}.
If $\gamma$ is a simple closed curve on $Y$ which is null-homotopic in $P$ 
then $w(\gamma)=1$, and so $\gamma$ is orientation-preserving.
If it is non-separating, 
then there is an associated separating curve in $Ess(Y)$,
bounding a torus or Klein bottle summand. 
The subgroup $\mathrm{Im}(\kappa)=\kappa(\pi_1(Y))$ is a 
non-trivial free product of $PD_2$-groups,
copies of $\mathbb{Z}/2\mathbb{Z}$ and free groups,
by the Algebraic Loop Theorem and the argument of 
\cite[Proposition 1.1]{JS76}.
(See also \cite[Lemma 3.1 and Corollary 3.10.2]{Hi}.)
Each indecomposable factor of $\mathrm{Im}(\kappa)$
which is not infinite cyclic is conjugate in $\pi$ 
to a subgroup of one of the factors of $\pi$, 
by the Kurosh Subgroup Theorem.
(If each curve in $Ess(Y)$ separates $Y$ then $\mathrm{Im}(\kappa)$ 
has no free factors.)

It is a familiar consequence of the topological Loop and Sphere Theorems 
that every compact orientable 3-manifold with boundary is 
the interior connected sum of indecomposable 3-manifolds, 
and these in turn either have empty boundary or may be assembled from 
aspherical 3-manifolds with $\pi_1$-injective boundary by adding 1-handles.
However, in our setting we do not know whether there is a Poincar\'e embedding of a family of 2-discs representing the classes in $Ess(Y)$,
along which we could reduce the pair to one with $\pi_1$-injective boundary.
The  Decomposition Theorems of Bleile allow for connected sum decompositions,
provided that the potential summands are indeed $PD_3$-pairs \cite{Bl}.
(When the boundary is aspherical and $\pi_1$-injective this 
is ensured by the Realization Theorem.)

Let $Y$ be a closed surface.
A homomorphism $\kappa:S=\pi_1(Y)\to{G}$ is {\it geometric\/} 
if there is a finite family $\Phi$ of disjoint, 
2-sided simple closed curves on $Y$ such that $\mathrm{Ker}(\kappa)$ 
is normally generated by the image of $\Phi$ in $S$.
It is {\it torsion free geometric\/} if $\kappa(S)$ is torsion free.
A {\it geometric group system\/} is a pair $(G,\mathcal{K})$, 
where $G$ is a finitely presentable group and $\mathcal{K}$ is 
a finite set of geometric homomorphisms from $PD_2$-groups to $G$.
It is {\it trivial\/} if $G=1$.
Two geometric group systems $(G_1,\mathcal{K}_1)$ and 
$(G_2,\mathcal{K}_2)$  are {\it isomorphic\/} if there is an isomorphism 
$\theta:G_1\to{G_2}$ and a bijection $b:\mathcal{K}_1\to\mathcal{K}_2$
such that $\theta\circ\kappa$ and $b(\kappa)$ have the same domains
and are conjugate as homomorphisms into $G_2$, 
for all $\kappa\in\mathcal{K}_1$.
A homomorphism $w:G\to\mathbb{Z}^\times$ is an {\it orientation character\/} for $(G,\mathcal{K})$ if $w\circ\kappa=w_1(Y)$ for each 
$\kappa\in\mathcal{K}$.

The peripheral system of a $PD_3$-pair with aspherical boundary
is a geometric group system, 
by the Algebraic Loop Theorem.
It is torsion free geometric if the pair is peripherally torsion free.

Let $(G,\{\kappa_j|j\in{J}\})$ be a geometric group system,
and let $f_j:Y_j=K(S_j,1)\to{K(G,1)}$ be maps realizing the homomorphisms 
$\kappa_j$.
Let $K$ be the mapping cylinder of $\amalg{f_j}:Y=\amalg{Y_j}\to {K(G,1)}$,
and let 
$H_*(G,\{\kappa_j\};M)=H_*(M\otimes_{\mathbb{Z}[G]}C_*(K,Y;\mathbb{Z}[G]))$,
for any right $\mathbb{Z}[G]$-module $M$.
If $G$ and the $S_j$s are $FP_2$ then $C_*(K,Y;\mathbb{Z}[G])$
is chain homotopy equivalent to a free complex which is finitely generated 
in degrees $\leq2$.
Let $w$ be an orientation character for $(G,\mathcal{K})$ 
and let $\mu\in{H_3(G,\{\kappa_j\};\mathbb{Z}^w)}$.
Then  $\mu$ {\it satisfies the boundary compatibility condition\/} 
if its image in $H_2(Y;\mathbb{Z}^w)$ under the connecting homomorphism
for the exact sequence of the pair $(K,Y)$ is a fundamental class for $Y$.

Let $I(G)$ be the kernel of the augmentation homomorphism 
from $\mathbb{Z}[G]$ to $\mathbb{Z}$,
and let  $C_*$ be a free left $\mathbb{Z}[G]$-chain complex which is finitely generated in degrees $\leq2$.
Let $C^*$ be the dual cochain complex,
defined by $C^q=Hom(C_q,\mathbb{Z}[G])$, for all $q$.
Let $F^2(C_*)=C^2/\delta^1(C^1)$.
Then Turaev defined a homomorphism 
\[
\nu_{C_*,2}:H_3(\mathbb{Z}^w\otimes_{\mathbb{Z}[G]}C_*)
\to[F^2(C_*),I(G)],
\]
where $[A,B]$ is the abelian group of projective homotopy equivalence 
classes of $\mathbb{Z}[G]$-module homomorphisms.
A class $\mu\in{H_3(G,\{\kappa_j\};\mathbb{Z}^w)}$ 
{\it satisfies the Turaev condition\/} if $\nu_{C_*,2}(\mu)$ is 
a projective homotopy equivalence for some free chain complex 
$C_*\simeq{C_*(K,Y;\mathbb{Z}[G])}$
which is finitely generated in degrees $\leq2$.
This condition does not depend on the choice of $C_*$
within the chain homotopy type.

The {\it Realization Theorem\/} of Turaev asserts that a triple $[G,w,\mu]$ 
is the fundamental triple of a $PD_3$-complex if and only if $\mu$
satisfies the Turaev condition.
In this case $\nu_{C_*,2}$ is an isomorphism,
and so a group $G$ is the fundamental group of a $PD_3$-complex
if and only if certain modules are stably isomorphic \cite{Tu}.
(See also \cite[Theorem 2.4]{Hi}.)

The fundamental triple of a $PD_3$-pair satisfies 
the boundary compatibility and Turaev conditions,
and Bleile's extension of the Realization Theorem asserts that triples with 
$\pi_1$-injective peripheral system and which satisfy these conditions
are realizable by $PD_3$-pairs \cite{Bl}.
The Algebraic Loop Theorem says nothing of interest in this case.
However,  it is the key to a much wider result.
(In the $\pi_1$-injective case $\nu_{C_*,2}$ is again an isomorphism.
In general, this is not so.
As a consequence, 
we do not yet have a realization theorem for the 
peripheral system alone.)

Let $(P,\partial{P})\sharp(Q,\partial{Q})$ 
and $(P,\partial{P})\natural(Q,\partial{Q})$ 
be the interior and boundary connected sums of pairs $(P,\partial{P})$ and
$(Q,\partial{Q})$, respectively. (See \cite{Bl,Wa}.)
All Betti numbers shall refer to homology with coefficients $\mathbb{F}_2$,
and shall be written as $\beta_i(X)$, rather than $\beta_i(X;\mathbb{F}_2)$,
for convenience in including the non-orientable case,
and for simplicity of notation.
We shall abbreviate ``cohomological dimension of $\pi$" as ``$c.d.\pi$".

Much of this material is in Chapters 2 and 3 of the book \cite{Hi}.

\section{the end module}

Let $(P,\partial{P})$ be a $PD_3$-pair, and let $\pi=\pi_1(P)$.
We begin with a very simple lemma.

\begin{lemma}
\label{null boundary}
Let $(P,\partial{P})$ be a $PD_3$-pair with a 
null-homotopic boundary component.
Then $(P,\partial{P})\simeq(D^3,S^2)$.
Hence an aspherical $PD_3$-pair is not a proper interior connected sum.
\end{lemma}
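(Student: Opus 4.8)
The plan is to prove the homotopy equivalence first, and to deduce the statement about interior connected sums from it.

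\emph{The homotopy equivalence.} Let $\iota\colon Y\hookrightarrow P$ be the inclusion, which is null-homotopic by hypothesis. The first observation is that $w_1(Y)=w\circ\iota_*$ is the trivial homomorphism, so $Y$ is orientable and $\iota$ pulls every coefficient system on $P$ back to a constant one on $Y$; in particular $H_2(Y;\iota^*\mathbb{Z}[\pi])\cong H_2(Y;\mathbb{Z})\otimes_{\mathbb{Z}}\mathbb{Z}[\pi]\cong\mathbb{Z}[\pi]$, and $\iota_*$ on $H_2$ with these coefficients vanishes, as it factors through $H_2$ of a point. Hence in the homology exact sequence of $(P,\partial P)$ with coefficients $\mathbb{Z}[\pi]$ the connecting homomorphism $\partial$ maps onto the direct summand $H_2(Y;\iota^*\mathbb{Z}[\pi])\cong\mathbb{Z}[\pi]$ of $H_2(\partial P;\mathbb{Z}[\pi])$. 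On the other hand Poincar\'e duality identifies $H_3(P,\partial P;\mathbb{Z}[\pi])$ with $H^0(P;\overline{\mathbb{Z}[\pi]})$, which is the group of $\pi$-invariants of $\mathbb{Z}[\pi]$, hence cyclic as an abelian group --- equal to $\mathbb{Z}$ when $\pi$ is finite and to $0$ when $\pi$ is infinite. A cyclic group can surject onto $\mathbb{Z}[\pi]$ only when $\pi=1$, so $\pi$ is trivial.

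\emph{Finishing, with $\pi=1$.} Now work with integral coefficients and trivial orientation character. Using $H_3(\partial P)=0$ and Poincar\'e duality $H_3(P,\partial P)\cong H^0(P)=\mathbb{Z}$, $H_2(P,\partial P)\cong H^1(P)=0$, the exact sequence of $(P,\partial P)$ reads $0\to H_3(P)\to\mathbb{Z}\xrightarrow{\ \partial\ }H_2(\partial P)\to H_2(P)\to0$; since $\partial$ sends the fundamental class to the tuple of fundamental classes of the boundary components, which is nonzero, we get $H_3(P)=0$ and $\partial$ injective. All boundary components are orientable, so $H_2(\partial P)$ is free of rank the number $c$ of components and the image of $\partial$ is spanned by $(\pm1,\dots,\pm1)$. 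Feeding in $\iota_*[Y]=0$ forces $(\pm1,0,\dots,0)$ into this span, hence $c=1$; so $\partial P=Y$ and $H_2(P)=0$. Thus $P$ is simply connected with vanishing reduced homology, i.e.\ contractible, and then $\widetilde H_{k-1}(Y)\cong H_k(P,Y)\cong H^{3-k}(P)$ is $\mathbb{Z}$ for $k=3$ and $0$ otherwise, so $H_1(Y)=0$ and the orientable closed surface $Y$ is $S^2$. Finally, pick a homeomorphism $S^2\to\partial P$ and, using contractibility of $P$, extend it over $D^3$ to a map of pairs $(D^3,S^2)\to(P,\partial P)$; it is a homotopy equivalence on total spaces and a homeomorphism on boundaries, hence a homotopy equivalence of pairs.

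\emph{The consequence.} Suppose $(P,\partial P)=(P_1,\partial P_1)\sharp(Q,\partial Q)$ with $P$ aspherical, so $P=P_1^{\,\circ}\cup_S Q^{\,\circ}$ with $S\cong S^2$ separating, $P_i^{\,\circ}=P_i$ with an open $3$-ball removed, and $\pi_1(P)=\pi_1(P_1)*\pi_1(Q)$. Since $\widetilde P$ is contractible, $\pi_2(P)=0$; as $\pi_2$ of an interior connected sum contains a free $\mathbb{Z}[\pi_1(P)]$-summand whenever both factor groups are nontrivial, one of them is trivial, say $\pi_1(Q)=1$. In the tree-of-spaces picture of $\widetilde P$ the copy of $Q^{\,\circ}$ is then a leaf, glued to the rest along a single lift $\widehat S\cong S^2$ of $S$, and Mayer--Vietoris with $\widetilde P$ contractible forces $\widetilde H_*(Q^{\,\circ})=0$ or $\widetilde H_*(Q^{\,\circ})\cong\widetilde H_*(S^2)$. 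If $Q^{\,\circ}$ is contractible then $S\hookrightarrow Q^{\,\circ}$ is null-homotopic, and the first part of the lemma applied to $(Q^{\,\circ},\partial Q\sqcup S)$ gives $(Q^{\,\circ},\partial Q\sqcup S)\simeq(D^3,S^2)$, so $Q\simeq S^3$. If $Q^{\,\circ}\simeq S^2$ then the constraints on fundamental classes in the $PD_3$-pair $(Q^{\,\circ},\partial Q\sqcup S)$ force $\partial Q$ connected and $S\hookrightarrow Q^{\,\circ}$ of degree $\pm1$, so $(Q,\partial Q)\simeq(D^3,S^2)$ by the first part applied to $Q$; but then $P\simeq P_1^{\,\circ}$, which is aspherical only when $P_1\simeq S^3$. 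Either way one of the summands is $S^3$, so the decomposition is not proper.

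\emph{The main obstacle.} The homotopy equivalence is the easy half: the only idea is that null-homotopy makes the relevant coefficient system constant and keeps the fundamental class of $Y$ nonzero, after which Poincar\'e duality kills $\pi$ and the rest is bookkeeping. The substantial work is in the consequence. The key input there is that $\pi_2$ of a proper interior connected sum is nonzero --- this is what isolates a simply connected summand --- and the delicate point is the Mayer--Vietoris analysis inside the contractible universal cover, in particular the borderline case $Q^{\,\circ}\simeq S^2$, where one must rule out $(Q,\partial Q)$ being a nontrivial ``fake ball''. Doing so rests on the Poincar\'e-duality constraint that $H_0(Q^{\,\circ},\partial Q^{\,\circ})$ is free abelian (equivalently, that the boundary fundamental classes of a $PD_3$-pair are nonzero and span the kernel of the map to $H_2$ of the interior), and that is the step I would carry out with the most care.
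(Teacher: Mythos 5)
Your proof is correct and follows essentially the same route as the paper: the long exact sequences of the pair with $\mathbb{Z}[\pi]$ and $\mathbb{Z}$ coefficients, combined with Poincar\'e duality, force $\pi=1$, $\partial{P}=Y$ and $P$ contractible, while the second assertion again rests on the essentialness of the connecting sphere, detected by a Mayer--Vietoris argument in the universal cover. The paper's proof is simply a terser version of yours, folding your case analysis of a simply connected summand into the meaning of ``proper''.
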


\begin{proof}
On considering the long exact sequences of homology for the pairs 
$(P,\partial{P})$ and $(\widetilde{P},\partial\widetilde{P})$ we see that
$\partial{P}$ is connected and then that $\pi_1(P)=1$. 
Hence $H_2(P,\partial{P};\mathbb{Z})=0$, by Poincar\'e duality,
and so $H_2(P;\mathbb{Z})=0$, 
by the long exact sequence of the pair and the hypothesis on $\partial{P}$.
Therefore $P$ is contractible and so $(P,\partial{P})\simeq(D^3,S^2)$.

If $(Q,\partial{Q})$ is a proper interior connected sum then 
the separating 2-sphere is essential
on each side, by the above result.
Hence it is essential in $Q$,  
by a Mayer-Vietoris argument in $\widetilde{Q}$,
and so $\pi_2(Q)\not=0$.
\end{proof}

The equivariant chain complex $C_*(P;\mathbb{Z}[\pi])$
is chain homotopy equivalent to a finite projective chain complex,
since cohomology of $P$ is isomorphic to homology of the pair,
by Poincar\'e duality,
and so commutes with direct limits of coefficient modules.
Therefore $\pi$ is $FP_2$.
If $\partial{P}\not=\emptyset$ then $P$ is a retract of the double 
$DP=P\cup_{\partial{P}}P$, and so $\pi$ is a retract of $\pi_1(DP)$.
(In order to study $\pi$ we may assume that $\partial{P}$ 
has no $S^2$ components.)

\begin{lemma}
\label{indec nonor}
Let $(P,\partial{P})$ be a $PD_3$-pair such that $w=w_1(P)$ does not split.
If $G$ is an indecomposable factor of $\pi=\pi_1(M)$ then either $G$ 
is a $PD_3$-group,
or $G$ has one end and $c.d.G=2$, or $G$ is virtually free.
\end{lemma}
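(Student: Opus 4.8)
The plan is to exploit the fact that when $w$ does not split, the pair $(P,\partial P)$ is peripherally torsion free, so every peripheral subgroup $B_j$ is a free product of $PD_2$-groups and free groups (no $\mathbb{Z}/2\mathbb{Z}$ factors arise). I would begin by recalling that $\pi$ is $FP_2$ and finitely generated, hence by Grushko's theorem it has a finite free-product decomposition $\pi=G_1*\dots*G_k*F_r$ into indecomposable factors, with each $G_i$ freely indecomposable and not infinite cyclic. Fix one such $G=G_i$. Since $\pi$ is $FP_2$, each factor $G$ is $FP_2$, hence finitely generated and finitely presented; in particular $c.d.\,G$ is defined (possibly infinite a priori) and $G$ has a well-defined number of ends.

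The core of the argument is a case analysis on the number of ends of $G$. If $G$ has two ends then $G$ is virtually $\mathbb{Z}$, hence virtually free, and we are done; similarly if $G$ is finite (zero ends) it is virtually free — though here I would want to invoke the hypothesis that $w$ does not split to rule out, or rather to accommodate, finite factors, since a finite $G$ is trivially virtually free. So the substantive case is when $G$ has \emph{one} end. Here I would use Poincar\'e duality for the pair: the equivariant chain complex $C_*(P;\mathbb{Z}[\pi])$ is chain homotopy equivalent to a finite projective complex of length $3$, and restricting/coinducing along $G\hookrightarrow\pi$ (using that $G$ is a free factor, so $\mathbb{Z}[\pi]$ is free as a $\mathbb{Z}[G]$-module) shows $G$ has a finite projective resolution, so $c.d.\,G\leq 3$. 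It remains to show that if $c.d.\,G=3$ then $G$ is a $PD_3$-group. For this I would argue that $G$, being an indecomposable one-ended $FP_2$ free factor of the $PD_3$-group-like $\pi$, inherits enough duality: the standard approach is to show $H^k(G;\mathbb{Z}[G])=0$ for $k<3$ and $H^3(G;\mathbb{Z}[G])\cong\mathbb{Z}$, whence $G$ is a $PD_3$-group by Bieri's or Johnson--Wall's criterion. The vanishing in low degrees follows from one-endedness ($H^0=H^1=0$ for infinite groups with one end, in the relevant sense) combined with the duality isomorphisms transported from $P$; the top-degree computation uses that $c.d.\,G=3$ forces $H^3(G;\mathbb{Z}[G])$ to be nonzero and, via the $PD_3$-structure of $(P,\partial P)$ away from the boundary, to be infinite cyclic. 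The peripheral subgroups $B_j$, being free products of $PD_2$-groups and free groups, contribute only to the boundary and their indecomposable $PD_2$-pieces are conjugate into the $G_i$ by the Kurosh subgroup theorem, but a $PD_2$-group has $c.d.=2$, not $3$, so these cannot be the $c.d.=3$ factors — consistent with the trichotomy.

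The main obstacle I anticipate is the last step: showing $c.d.\,G=3$ implies $G$ is a $PD_3$-group, rather than merely a $3$-dimensional duality group or something weaker. The delicate point is controlling $H^*(G;\mathbb{Z}[G])$ precisely — one must rule out $H^3(G;\mathbb{Z}[G])$ being a more complicated module than $\mathbb{Z}$, and one must ensure the lower cohomology genuinely vanishes rather than just being torsion or finitely generated. I would handle this by passing to the double $DP=P\cup_{\partial P}P$, which is a $PD_3$-complex with $\pi_1(DP)$ retracting onto $\pi$, hence onto $G$; then $G$ is a free factor (up to the retraction) of a $PD_3$-group, and I can cite the known structure theory for indecomposable factors of $PD_3$-complex groups — essentially Turaev's splitting theorem together with Crisp's and Hillman's analysis — to conclude that a one-ended such factor with $c.d.=3$ is a $PD_3$-group. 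The hypothesis that $w$ does not split enters precisely to guarantee peripheral torsion-freeness, which is what allows the doubling argument to avoid $\mathbb{Z}/2\mathbb{Z}$-complications and keeps the factor list clean.
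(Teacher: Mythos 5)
Your closing paragraph --- double the pair, observe that $\pi$ is a retract of $\pi_1(DP)$ and that $w_1(DP)$ cannot split, and then invoke the absolute classification of indecomposable factors of $PD_3$-complex groups together with the Kurosh Subgroup Theorem --- is exactly the paper's proof, and it is the only part of your proposal that actually closes the argument. The ``core'' argument you present before that has two genuine gaps. First, the claim that restricting the finite projective complex $C_*(P;\mathbb{Z}[\pi])$ to $\mathbb{Z}[G]$ gives $c.d.\,G\leq 3$ fails: that complex is a resolution of $\mathbb{Z}$ only when $P$ is aspherical; in general its homology contains $\pi_2(P)$ in degree $2$, so no bound on $c.d.\,G$ follows this way. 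Second, the proposed direct verification that $H^k(G;\mathbb{Z}[G])=0$ for $k<3$ and $\cong\mathbb{Z}$ for $k=3$ ``via duality transported from $P$'' is not carried out and is not available for a free factor $G$ of $\pi$: Poincar\'e duality for the pair relates $H^*(P;-)$ to $H_{3-*}(P,\partial P;-)$, and the boundary terms interfere precisely in degrees $1$ and $2$ (this is the content of the exact sequence relating $\Pi$, $\overline{E(\pi)}$ and $H_*(\partial P;\mathbb{Z}[\pi])$ in \S2 of the paper).

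You also misstate the mechanism that produces the $c.d.\,G=2$ alternative. After doubling, $G$ is a \emph{subgroup} of $\pi_1(DP)$ (a retract gives a subgroup, not a free factor), and Kurosh then places the indecomposable, non--virtually-free $G$ inside an indecomposable factor of $\pi_1(DP)$, which by the absolute case (Crisp in the orientable case, Hillman's Theorem 7.10 when $w_1(DP)$ does not split) is a $PD_3$-group. The trichotomy then comes from Strebel's theorem: a subgroup of a $PD_3$-group is either of finite index, hence itself a $PD_3$-group, or of infinite index, hence of cohomological dimension at most $2$; being $FP_2$, indecomposable and not virtually free it then has one end and $c.d.\,G=2$. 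You never name this step, and without it the split into the three cases of the statement does not emerge. Finally, the role of the hypothesis that $w$ does not split is not merely to ensure peripheral torsion-freeness for the doubling; it is needed so that $w_1(DP)$ does not split, since otherwise $\pi_1(DP)$ may have indecomposable factors with infinitely many ends that are not virtually free, and the absolute classification you want to cite is false in that generality.
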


\begin{proof}
If $\partial{P}$ is empty then indecomposable factors of $\pi$ 
which are not virtually free are $PD_3$-groups \cite[Theorem 14]{Cr00}
(see also \cite[Theorem 4.8]{Hi}),
if the pair is orientable, and by \cite[Theorem 7.10]{Hi} otherwise.
If $\partial{P}\not=\emptyset$ then $\pi$ is a retract of $\pi(DP)$.
A splitting of $w_1(DP)$ would induce a splitting of $w$,
and so  $w_1(DP)$ does not split.
The lemma now follows from the Kurosh Subgroup Theorem,
as indecomposable factors of $\pi$ which are not virtually free
must be conjugate to subgroups of factors of $\pi_1(DP)$ which are $PD_3$-groups.
Thus if $G$ is indecomposable and not virtually free it is either 
a $PD_3$-group or has one end and $c.d.G=2$.
\end{proof}

If $w$ splits, 
a similar reduction to the absolute case and \cite[Theorem 7.10]{Hi} 
shows that $\pi$ may have indecomposable factors with infinitely many ends,
but the intersection of any non-orientable factor with $\pi^+=\mathrm{Ker}(w)$ 
is torsion free.
In general, $\pi$ is $vFP$ and $v.c.d.\pi\leq3$. 

If $\pi$ is a $PD_3$-group then the components of $\partial{P}$ 
are copies of $S^2$,
while if $c.d.\pi=2$ and $\pi$ has one end then 
$\partial{P}$ has at least one aspherical component,
and the peripheral system is $\pi_1$-injective \cite[Lemma 3.1]{Hi}.

Let $E(\pi)=H^1(\pi;\mathbb{Z}[\pi])$ be the {\it end module\/} of $\pi$,
and let $\Pi=\pi_2(P)=H_2(P;\mathbb{Z}[\pi])$.
The end module is naturally a right $\mathbb{Z}[\pi]$-module,
and the conjugate dual $\overline{E(\pi)}$ is isomorphic to 
$H_2(P,\partial{P};\mathbb{Z}[\pi])$, 
by Poincar\'e duality.
The interaction of $P$ and $\partial{P}$ are largely reflected in the exact sequence
\[
0\to{H_2(\partial{P};\mathbb{Z}[\pi])}\to\Pi\to\overline{E(\pi)}
\to{H_1(\partial{P};\mathbb{Z}[\pi])}\to0.
\]
derived from the exact sequence of homology for the pair,
with coefficients $\mathbb{Z}[\pi]$.
The group $H_1(\partial{P};\mathbb{Z}[\pi])$ is determined by the peripheral system,
and is 0 if the peripheral system is $\pi_1$-injective.
In general, it is a direct sum of terms corresponding to the 
compressible aspherical boundary components.
The group $H_2(\partial{P};\mathbb{Z}[\pi])$ is 0 if the pair has aspherical boundary
\cite[Lemma 3.1]{Hi}.

\begin{lemma}
\label{end module}
Let $G$ be a finitely generated group such that $G=*_{i=1}^mG_i*F(n)$, 
where $G_i$ has one end for $i\leq{m}$ and $m>0$.
Then $E(G)\cong\mathbb{Z}[G]^{m+n-1}$ as a right $\mathbb{Z}[G]$-module. 
\end{lemma}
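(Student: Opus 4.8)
The plan is to realise $G$ as the fundamental group of a graph of groups whose vertex groups are exactly the one-ended factors $G_1,\dots,G_m$, and to read $E(G)=H^1(G;\mathbb{Z}[G])$ off the associated Bass--Serre tree. Since $m\geq1$, take $\Gamma$ to be the graph with vertices $v_1,\dots,v_m$, one edge joining $v_1$ to $v_i$ for each $i$ with $2\leq i\leq m$, and $n$ further loops at $v_1$; give $v_i$ the vertex group $G_i$ and every edge the trivial group. Then the fundamental group of this graph of groups is $G_1*\dots*G_m*F(n)=G$, and the corresponding Bass--Serre tree $T$ is a contractible $G$-tree with trivial edge stabilisers, $m+n-1$ orbits of edges, and vertex stabilisers conjugate to $G_1,\dots,G_m$. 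Hence the augmented simplicial chain complex of $T$ is a short exact sequence of left $\mathbb{Z}[G]$-modules
\[
0\to\mathbb{Z}[G]^{m+n-1}\xrightarrow{\ \partial\ }\bigoplus_{i=1}^{m}\mathbb{Z}[G/G_i]\xrightarrow{\ \varepsilon\ }\mathbb{Z}\to0,
\]
exactness being just $\widetilde H_*(T)=0$. (When $m=1$ there are no tree edges, so $C_1(T)=\mathbb{Z}[G]^n$; if moreover $n=0$ then $G=G_1$ and the assertion reads $E(G)=0$, which holds as $G_1$ has one end.)

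I would then apply $\mathrm{Ext}^{*}_{\mathbb{Z}[G]}(-,\mathbb{Z}[G])$ to this sequence. By Eckmann--Shapiro, $\mathrm{Ext}^{q}_{\mathbb{Z}[G]}(\mathbb{Z}[G/G_i],\mathbb{Z}[G])\cong H^{q}(G_i;\mathbb{Z}[G])$ for all $q$, since $\mathbb{Z}[G/G_i]=\mathbb{Z}[G]\otimes_{\mathbb{Z}[G_i]}\mathbb{Z}$ and induction along $G_i\leq G$ is exact and carries free modules to free modules. Thus in low degrees the long exact sequence reads
\[
0\to H^{0}(G;\mathbb{Z}[G])\to\bigoplus_{i}H^{0}(G_i;\mathbb{Z}[G])\to\mathbb{Z}[G]^{m+n-1}\to E(G)\to\bigoplus_{i}H^{1}(G_i;\mathbb{Z}[G])\to0.
\]
Now $H^{0}(G;\mathbb{Z}[G])=0$ and $H^{0}(G_i;\mathbb{Z}[G])=\mathbb{Z}[G]^{G_i}=0$ because $G$ and each $G_i$ are infinite (here $m\geq1$ forces $G$ infinite); and, as a left $\mathbb{Z}[G_i]$-module, $\mathbb{Z}[G]$ is free on a set of representatives of the right cosets $G_i\backslash G$, so $H^{1}(G_i;\mathbb{Z}[G])\cong\bigoplus_{G_i\backslash G}H^{1}(G_i;\mathbb{Z}[G_i])=\bigoplus_{G_i\backslash G}E(G_i)=0$ since $G_i$ has one end. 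With these three group-cohomology terms zero the sequence collapses to an isomorphism $\mathbb{Z}[G]^{m+n-1}\xrightarrow{\ \cong\ }E(G)$ of right $\mathbb{Z}[G]$-modules, which is the claim.

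The one step requiring care is the computation $H^{1}(G_i;\mathbb{Z}[G])=0$: it is not enough that $G_i$ has one end, one also needs $H^{1}(G_i;-)$ to commute with the (in general infinite) direct sum occurring in the Shapiro decomposition of $\mathbb{Z}[G]$ over $\mathbb{Z}[G_i]$. This is where finite generation enters — each $G_i$, being a free factor of the finitely generated group $G$, is finitely generated, hence of type $FP_1$, so $H^{1}(G_i;-)$ does commute with direct sums. Nothing else, and in particular none of the Turaev, boundary-compatibility or Algebraic Loop Theorem conditions, is used.
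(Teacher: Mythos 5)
Your argument is correct and is essentially the paper's proof: you use the same graph of groups ($m$ vertices, $m+n-1$ trivial edge groups), and the long exact sequence you extract from the augmented chain complex of the Bass--Serre tree via Shapiro's lemma is precisely the Chiswell Mayer--Vietoris sequence that the paper cites. You have simply unpacked that citation, correctly noting the two points it rests on (finite generation of the free factors, so that $H^1(G_i;-)$ commutes with the direct sum, and one-endedness killing the end terms).
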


\begin{proof}
The group $G$ is the fundamental group of a graph of groups whose underlying graph 
has $m$ vertices and $m+n-1$ edges, 
the vertex groups being the factors $G_i$ and the edge groups all being trivial.
The lemma follows immediately from the Chiswell Mayer-Vietoris sequence
for such graphs of groups.
\end{proof}

The hypothesis $m>0$ is necessary,
since $E(F(n))$ has a short free resolution with $n$ generators and one relator, 
but has no free direct summand.

We shall use this lemma with Poincar\'e duality to determine the rank 
of the maximal free factor of $\pi$ in terms of peripheral data.

\section{ estimating the maximal free factor}

Let $\kappa:S=\pi_1(Y)\to{G}$ be a geometric homomorphism.
We may assume that the associated family $\Phi$ of disjoint, 
2-sided simple closed curves on $Y$ is minimal, and shall then say that
$\Phi$ is a {\it geometric basis\/} for $\mathrm{Ker}(\kappa)$.
Let $r=r(\kappa)$ be the number of non-separating curves in $\Phi$.
Then $\kappa(S)\leq{G}$ is a free product $\kappa(S)\cong(*_{p=1}^aS_p)*F(r)$, 
where the $S_p$s are $PD_2$-groups or copies of $\mathbb{Z}/2\mathbb{Z}$ and 
$\Sigma_{p=1}^a\beta_1(S_p)=\beta_1(S)-2r(\kappa)$.
Moreover, $|\Phi|=r(\kappa)$, if $a=0$, and $|\Phi|=a+r(\kappa)-1$ otherwise. 
The  homomorphism $\kappa$ is  torsion free geometric if
and only if  no curve in $\Phi$ bounds a M\"obius band in $Y$.
Stallings' method of ``binding ties" \cite{St65} may be used to show that 
if $S\to{A*B}$ is an epimorphism then $Y$ decomposes accordingly 
as a connected sum. 
This together with the hopficity of $PD_2$-groups implies that a homomorphism 
$\kappa:S\to {G}$ is geometric if and only if
$\kappa(S)\cong(*_{p=1}^aS_p)*F(r)$, 
for some finite set $\{S_p|1\leq{p}\leq{a}\}$ as above and 
$r=\frac12(\beta_1(S)-\Sigma_{p=1}^a\beta_1(S_p))$.

\begin{lemma}
\label{short free}
Let $Y$ be an aspherical closed surface and $\kappa:S=\pi_1(Y)\to{B}$ 
a geometric homomorphism with geometric basis $\Phi$.
Assume that $\kappa$ is an epimorphism and $B$ is torsion free.
Then $H_1(S;\mathbb{Z}[B])$ has a short free resolution, 
$\mathbb{F}_2\otimes_BH_1(S;\mathbb{Z}[B])$ has dimension $|\Phi|$,
and $Tor_1^B(\mathbb{F}_2,H_1(S;\mathbb{Z}[B]))=\mathbb{F}_2$ if $B$ is free and is 
$0$ otherwise.
\end{lemma}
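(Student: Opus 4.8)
The plan is to compute $H_1(S;\mathbb{Z}[B])$ directly from a cell structure on $Y$ adapted to the geometric basis $\Phi$, by cutting $Y$ along the curves of $\Phi$. Since $\kappa$ is an epimorphism with $\ker(\kappa)$ normally generated by $\Phi$, the cover of $Y$ corresponding to $\ker(\kappa)$ is built from $|J|$ copies of the pieces $Y\setminus\Phi$, glued along lifts of the curves in $\Phi$; equivalently, $S$ is the fundamental group of a graph of groups over the dual graph of $\Phi$ in $Y$, with vertex groups the $\pi_1$ of the complementary pieces and edge groups $\mathbb{Z}$ (generated by the $\Phi$-curves), and $\ker(\kappa)$ is generated as a normal subgroup by these edge groups. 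I would use the Chiswell–Mayer–Vietoris sequence (as in Lemma \ref{end module}) for this graph of groups with coefficients $\mathbb{Z}[B]$, or equivalently analyze the relative chain complex $C_*(\widehat{Y},\widehat{\Phi};\mathbb{Z}[B])$ where $\widehat{\cdot}$ denotes the $\kappa$-cover.

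First I would set up the two-complex model: take the standard handle-decomposition of $Y$ in which the curves of $\Phi$ are among the $1$-cells, and observe that capping off $\Phi$ (collapsing each curve) gives exactly the presentation complex for $B = \kappa(S)\cong (*_{p=1}^aS_p)*F(r)$ — this is the content of the "binding ties'' remark preceding the lemma. So at the level of chain complexes over $\mathbb{Z}[B]$ we have a short exact sequence relating $C_*(\widetilde Y;\mathbb{Z}[B])$ (i.e. $\mathbb{Z}[B]\otimes_{\mathbb{Z}[S]}C_*(\widetilde Y)$), the chains $C_*$ of the collapsed complex, and a complex supported on the $|\Phi|$ curves, each contributing a free rank-one piece (a circle's worth of chains). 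Running the long exact homology sequence, and using that the collapsed complex computes $H_*(B;\mathbb{Z}[B])$ — which vanishes in positive degrees below $\mathrm{cd}(B)$ and has $H_0=\mathbb{Z}$ — one extracts that $H_1(S;\mathbb{Z}[B])$ sits in an exact sequence with a free module of rank $|\Phi|$ surjecting onto it, and with kernel a free module; this gives the short free resolution and pins down $\mathbb{F}_2\otimes_B H_1(S;\mathbb{Z}[B])$ to have dimension $|\Phi|$ after checking that the connecting map becomes zero mod $2$ (it is multiplication by the element $g^2-1$ type relators, or by $1+t$ for a Möbius curve, all of which die in $\mathbb{F}_2\otimes$ — here one uses that $\kappa$ torsion-free geometric or not only affects whether a curve is $2$-sided, not the $\mathbb{F}_2$ count).

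Then I would compute $\mathrm{Tor}_1^B(\mathbb{F}_2,H_1(S;\mathbb{Z}[B]))$ from the short free resolution $0\to P_1\to P_0\to H_1\to 0$: applying $\mathbb{F}_2\otimes_B(-)$, the Tor is the kernel of $\mathbb{F}_2\otimes P_1\to\mathbb{F}_2\otimes P_0$, so I need the rank of $P_1$ and the mod-$2$ rank of that boundary map. When $B$ is free, $\mathrm{cd}(B)\le 1$ forces one extra free summand to appear in the resolution coming from $H_2(Y;\mathbb{Z}[B])$-type terms (the top cell of $Y$ survives, since $H_2(S;\mathbb{Z}[B])=\mathbb{Z}$ for the orientable case and the nonorientable analysis is similar), and tracking this through shows $P_1$ has rank one more than it "should,'' with the boundary map hitting the augmentation ideal $I(B)$, whose $\mathbb{F}_2\otimes_B(-)$ vanishes — hence $\mathrm{Tor}_1=\mathbb{F}_2$. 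When $B$ is not free, $\mathrm{cd}(B)\ge 2$ (the nontrivial factors $S_p$ are $PD_2$-groups or $\mathbb{Z}/2$, and at least one is present, or $B$ is a nontrivial free product of such), so $H_2(B;\mathbb{Z}[B])$ contributes differently and the resolution is "tight'': $P_1$ has exactly rank $|\Phi| - \dim\ker$, and the map $\mathbb{F}_2\otimes P_1\to\mathbb{F}_2\otimes P_0$ turns out to be injective, giving $\mathrm{Tor}_1=0$.

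**Main obstacle.** The bookkeeping in the last paragraph is where I expect the real work: correctly identifying, in the long exact sequence, exactly how $H_2$ and $H_3$ of the surface (with $\mathbb{Z}[B]$ coefficients) feed the free resolution of $H_1(S;\mathbb{Z}[B])$, and doing this uniformly across the cases where $B$ is free, where $B$ has a $PD_2$ factor, and where $B$ has only $\mathbb{Z}/2$ factors — the torsion in $B$ is what makes $H_*(B;\mathbb{Z}[B])$ behave differently and is precisely what the $\mathrm{Tor}_1$ statement is detecting. A clean way to organize this would be to factor $\kappa$ as $S\twoheadrightarrow S/\!\langle\!\langle\Phi_{\mathrm{sep}}\rangle\!\rangle \to B$, i.e. first kill the separating curves (which decomposes $Y$ as a connected sum and makes $H_1$ transparent), then handle the non-separating curves (each contributing a free $\mathbb{Z}[B]$ to $H_1$ directly, via an HNN/Mayer-Vietoris step), so that the free resolution is assembled piece by piece and the $\mathrm{Tor}_1$ computation localizes to a single model computation for each factor type of $B$.
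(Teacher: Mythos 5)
Your overall strategy (a $2$-complex model adapted to $\Phi$, a length-one resolution of $H_1(S;\mathbb{Z}[B])$, and a rank count after applying $\mathbb{F}_2\otimes_B-$) is viable and close in spirit to the paper's, but the three points you leave vague are exactly where the content of the lemma lies, and two of your supporting claims are false. First, the kernel of your surjection $\mathbb{Z}[B]^{|\Phi|}\to H_1(S;\mathbb{Z}[B])$ is $\pi_2(V)=H_2(V;\mathbb{Z}[B])$, where $V=Y\cup|\Phi|e^2$; this complex is \emph{not} aspherical (for $Y=T^2$ and $\Phi$ a single non-separating curve, $V\simeq S^1\vee S^2$ and $\pi_2(V)\cong\mathbb{Z}[B]$), so it does not ``compute $H_*(B;\mathbb{Z}[B])$'', and you give no reason why this kernel should be free. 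Projectivity is precisely where the hypotheses enter: since $B$ is torsion free it is a free product of $PD_2$-groups and a free group, so $c.d.B\leq2$ and there is a finite $2$-dimensional $K(B,1)$ with one $0$-cell, $2g-r$ $1$-cells and $a$ $2$-cells; a Schanuel argument then shows the relevant syzygy is stably free and, crucially, computes its stable rank. (The paper runs this from below rather than from above, resolving $H_1(S;\mathbb{Z}[B])$ as $0\to\mathbb{Z}[B]\to Z_1\to H_1(S;\mathbb{Z}[B])\to0$ with $Z_1$ the module of $1$-cycles, stably free of stable rank $a+r$.) Repeatedly allowing $\mathbb{Z}/2\mathbb{Z}$ factors of $B$, as you do, is both excluded by hypothesis and fatal to this step, since it would force $c.d.B=\infty$.

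Second, your mechanism for the $Tor_1$ dichotomy rests on the claim that ``the top cell of $Y$ survives, since $H_2(S;\mathbb{Z}[B])=\mathbb{Z}$''. This is false: $B$ is infinite (it is nontrivial and torsion free), so the cover of $Y$ with group $\mathrm{Ker}(\kappa)$ is open and $H_2(S;\mathbb{Z}[B])=0$ --- indeed this vanishing is what makes the resolution short in the first place. The actual source of the free/non-free dichotomy is combinatorial, and you never isolate it: a minimal geometric basis has $|\Phi|=r$ when $B$ is free and $|\Phi|=a+r-1$ otherwise, so once one knows that $\dim\mathbb{F}_2\otimes_BH_1(S;\mathbb{Z}[B])=|\Phi|$ and that the middle term of the resolution has stable rank $a+r$, an Euler characteristic count gives $\dim Tor_1^B(\mathbb{F}_2,H_1(S;\mathbb{Z}[B]))=1-(a+r)+|\Phi|$, which is $1$ or $0$ in the two cases. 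Moreover the input $\dim\mathbb{F}_2\otimes_BH_1(S;\mathbb{Z}[B])=|\Phi|$ is itself a computation you defer to ``checking the connecting map becomes zero mod $2$''; the paper proves it via the five-term exact sequence of the LHS spectral sequence for $1\to\mathrm{Ker}(\kappa)\to S\to B\to1$, using $H_2(B;\mathbb{F}_2)\cong\mathbb{F}_2^a$ and the fact that the projections $S\to S_i$ have degree $1$, so that $H_2(S;\mathbb{F}_2)\to H_2(B;\mathbb{F}_2)$ is injective when $a>0$. As written, your proposal postpones all three of these points to its ``main obstacle'' paragraph without resolving them, so the argument is not yet a proof.
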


\begin{proof}
We may assume that $\Phi$ is non-empty.
Then $B\cong(*_{i=1}^aS_i)*F(r)$, where the $S_i$s are $PD_2$-groups,
and so there is a finite 2-dimensional $K(B,1)$ complex,
with one 0-cell, $2g-r$ 1-cells and $a$ 2-cells,
where $g=\frac12\beta_1(S)$.
The submodule $Z_1$ of 1-cycles in the chain complex $C_*(S;\mathbb{Z}[B])$ 
is a finitely generated stably free $\mathbb{Z}[B]$-module,
of stable rank $a+r$, 
by a Schanuel's Lemma argument, since $c.d.B\leq2$.
Since $B$ is infinite, $H_2(S;\mathbb{Z}[B])=0$.
Hence $H_1(S;\mathbb{Z}[B])$ has a short projective resolution
\[
0\to\mathbb{Z}[B]\to{Z_1}\to{H_1(S;\mathbb{Z}[B])}\to0.
\]
The 5-term exact sequence of low degree from the LHS homology spectral sequence 
for $S$ as an extension of $B$ by $K=\mathrm{Ker}(\kappa)$ is
\[
H_2(S;\mathbb{F}_2)\to{H_2(B;\mathbb{F}_2)}\to{H_0(B;H_1(K;\mathbb{F}_2))}\to
{H_1(S;\mathbb{F}_2)}\to{H_1(B;\mathbb{F}_2)}\to0.
\]
The central term is 
$H_0(B;H_1(K;\mathbb{F}_2))\cong\mathbb{F}_2\otimes_BH_1(S;\mathbb{Z}[B])$.

If $B$ is a free group then $H_2(B;\mathbb{F}_2)=0$,
$a=0$ and $r=g$, and so $H_0(B;H_1(K;\mathbb{F}_2))$ has dimension $r$.
If $a>0$ then $H_2(S;\mathbb{F}_2)$ maps injectively to 
$H_2(B;\mathbb{F}_2)\cong\mathbb{F}_2^a$,
since the natural epimorphisms from $S$ to the factors $S_i$ have degree 1,
and so $H_0(B;H_1(K;\mathbb{F}_2))$ has dimension $a+r-1$.
Since $|\Phi|=r$ if $B$ is a free group and $|\Phi|=a+r-1$ otherwise,
this proves the second assertion.

The final assertion follows on applying the tensor product 
$\mathbb{F}_2\otimes_B-$ to the above resolution of $H_1(S;\mathbb{Z}[B]))$ 
and using the fact that $Z_1$ is stably free of stable rank $a+r$.
\end{proof}

Note also that $Tor_i^B(\mathbb{F}_2,H_1(S;\mathbb{Z}[B]))=0$ for $i>1$.

Let $(G,\{\kappa_j|j\in{J}\})$ be a geometric group system such that 
$G\cong(*_{i\in{I}}G_i)*W$, 
where the factors $G_i$ each have one end and $W$ is virtually free,
and such that $\mathrm{Im}(\kappa_j)$ is torsion free, for all $j\in{J}$.
Let $\Gamma(G,\{\kappa_j\})$ be the bipartite graph with 
vertex set $I\sqcup{J}$ and an edge from $j\in{J}$ to $i\in{I}$ 
for each indecomposable factor of $\mathrm{Im}(\kappa_j)$ 
which is a $PD_2$-group and is conjugate to a subgroup of $G_i$.
This graph shall play the role of a group theoretic approximation to a template for the assembly of a $PD_3$-pair from simpler pieces by connected sums.

If $G$ is virtually free then $I$ is empty and so $\Gamma(G,\{\kappa_j\})$
has no edges.

\begin{lemma}
\label{min gen}
Let $(P,\partial{P})$ be a $PD_3$-pair with non-empty, 
aspherical boundary and peripheral system $(\pi,\{\kappa_j|j\in{J}\})$, 
and such that $\pi\cong\sigma*F(n)$,
where $\sigma\not=1$ and has no nontrivial virtually free factor.
Suppose also that $B_j=\mathrm{Im}(\kappa_j)$ is a free product 
of $PD_2$-groups, for $j\in{J}$.
Then $n\geq1-\chi(\Gamma(\pi,\{\kappa_j\}))$.
The pair is aspherical if and only if 
$c.d.\pi=2$ and $n=1-\chi(\Gamma(\pi,\{\kappa_j\}))$. 
\end{lemma}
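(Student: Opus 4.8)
The plan is to read off $\Pi=\pi_2(P)=H_2(P;\mathbb{Z}[\pi])$ from the exact sequence
\[
0\to\Pi\to\overline{E(\pi)}\to H_1(\partial P;\mathbb{Z}[\pi])\to0
\]
of \S2 (the term $H_2(\partial P;\mathbb{Z}[\pi])$ vanishes since the boundary is aspherical, by \cite[Lemma 3.1]{Hi}) and to compare the ranks of the three modules. First I would write $\sigma=*_{i=1}^{m}G_i$ with the $G_i$ indecomposable; since $\sigma$ has no nontrivial virtually free factor, each $G_i$ has one end (this is part of Lemma \ref{indec nonor}, and uses Dunwoody's accessibility \cite{DD}), and $m\geq1$ as $\sigma\neq1$. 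By Lemma \ref{end module}, $E(\pi)\cong\mathbb{Z}[\pi]^{m+n-1}$, hence $\overline{E(\pi)}\cong\mathbb{Z}[\pi]^{m+n-1}$. On the right, $H_1(\partial P;\mathbb{Z}[\pi])=\bigoplus_{j\in J}H_1(Y_j;\mathbb{Z}[\pi])$; since the $S_j=\pi_1(Y_j)$-action on $\mathbb{Z}[\pi]$ factors through $B_j$, over which $\mathbb{Z}[\pi]$ is free, flat base change gives $H_1(Y_j;\mathbb{Z}[\pi])\cong H_1(S_j;\mathbb{Z}[B_j])\otimes_{\mathbb{Z}[B_j]}\mathbb{Z}[\pi]$. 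As each $\kappa_j$ is geometric and nontrivial, $B_j$ is a nonempty free product of, say, $a_j\geq1$ $PD_2$-groups, so $r(\kappa_j)=0$, $|\Phi_j|=a_j-1$, and $B_j$ is torsion free and not free; Lemma \ref{short free} then provides a short free $\mathbb{Z}[B_j]$-resolution of $H_1(S_j;\mathbb{Z}[B_j])$ whose two free modules differ in rank by $|\Phi_j|=a_j-1$. Tensoring up and summing over $j$, the module $H:=H_1(\partial P;\mathbb{Z}[\pi])$ has a short free $\mathbb{Z}[\pi]$-resolution $0\to\mathbb{Z}[\pi]^{b}\to\mathbb{Z}[\pi]^{c}\to H\to0$ with $c-b=\sum_{j\in J}(a_j-1)$.

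Applying Schanuel's Lemma to this resolution and to $0\to\Pi\to\mathbb{Z}[\pi]^{m+n-1}\to H\to0$ gives $\Pi\oplus\mathbb{Z}[\pi]^{c}\cong\mathbb{Z}[\pi]^{b+m+n-1}$, so $\Pi$ is stably free of ``rank'' $\rho:=(m+n-1)-\sum_{j\in J}(a_j-1)$. The augmentation $\mathbb{Z}\otimes_{\mathbb{Z}[\pi]}-$ sends this isomorphism to $(\mathbb{Z}\otimes_{\mathbb{Z}[\pi]}\Pi)\oplus\mathbb{Z}^{c}\cong\mathbb{Z}^{b+m+n-1}$, in which the first summand is a summand of a free abelian group, hence free of rank $\rho\geq0$. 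Next I would compute $\chi(\Gamma(\pi,\{\kappa_j\}))$: the vertex set $I\sqcup J$ has $m+|J|$ elements, and since every indecomposable factor of every $B_j$ is a $PD_2$-group, by the Kurosh Subgroup Theorem it is conjugate into exactly one of the $G_i$ and into no infinite cyclic free factor of $\pi$, so $\Gamma$ has $\sum_{j\in J}a_j$ edges. Hence $\rho=n-(1-\chi(\Gamma))$, and $\rho\geq0$ is exactly the asserted inequality $n\geq1-\chi(\Gamma)$.

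For the final statement, note first that $P$ is aspherical if and only if $\Pi=0$: one direction is trivial, and if $\Pi=\pi_2(P)=0$ then $\widetilde P$ is $2$-connected with $H_k(\widetilde P)=H_k(P;\mathbb{Z}[\pi])=0$ for $k\geq3$ (since $H_3(P;\mathbb{Z}[\pi])\cong H^0(P;\mathbb{Z}[\pi])=0$ by Poincar\'e duality, and $C_*(P;\mathbb{Z}[\pi])$ is chain homotopy equivalent to a finite projective complex of length $\leq3$), so $\widetilde P$ is weakly contractible and $P$ is aspherical. Now $\Pi=0$ forces $\rho=0$, i.e. $n=1-\chi(\Gamma)$; and when $P$ is aspherical $\pi$ is $FP$ with $H^3(\pi;\mathbb{Z}[\pi])\cong H_0(P,\partial P;\mathbb{Z}[\pi])=0$ (Poincar\'e duality, and $H_0$ of the pair vanishes because $\partial P\neq\emptyset$), so $c.d.\pi\leq2$, while $c.d.\pi\geq c.d.G_1\geq2$. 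Conversely, if $n=1-\chi(\Gamma)$ then $\rho=0$, so $\Pi\oplus\mathbb{Z}[\pi]^{c}\cong\mathbb{Z}[\pi]^{c}$; since the group ring $\mathbb{Z}[\pi]$ is stably finite (Kaplansky), a stably free module of rank $0$ over it is trivial, so $\Pi=0$ and $P$ is aspherical. (The condition $c.d.\pi=2$ is in fact implied by $n=1-\chi(\Gamma)$, but recording it keeps the criterion self-contained.)

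The step requiring most care is converting Lemma \ref{short free} into the resolution of $H$ with the precise rank drop $\sum_{j\in J}(a_j-1)$ — handling both the merely stably free module of $1$-cycles that occurs there and the incompressible components (where $\Phi_j=\emptyset$ and $a_j=1$, contributing $0$) — together with the passage from ``$\Pi$ stably free of rank $0$'' to ``$\Pi=0$'', which is the one point where something beyond elementary homological algebra, namely stable finiteness of $\mathbb{Z}[\pi]$, is invoked.
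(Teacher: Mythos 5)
Your argument is correct and follows the paper's proof in all essentials: the same exact sequence $0\to\Pi\to\overline{E(\pi)}\to{H_1(\partial{P};\mathbb{Z}[\pi])}\to0$, the same inputs from Lemmas \ref{indec nonor}, \ref{end module} and \ref{short free}, and the same count of vertices and edges of $\Gamma(\pi,\{\kappa_j\})$. The one genuine divergence is in how $\Pi=0$ is extracted at the end. The paper applies $\mathbb{F}_2\otimes_\pi-$ to the sequence (using the vanishing of $Tor_1$ from Lemma \ref{short free}), records only that $\Pi$ is projective, and then must invoke Eckmann \cite{Ec86} -- which is exactly why the hypothesis $c.d.\pi=2$ appears in the converse direction of the statement. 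You instead keep track of the ranks in Schanuel's Lemma, so that $\Pi\oplus\mathbb{Z}[\pi]^c\cong\mathbb{Z}[\pi]^{b+m+n-1}$ exhibits $\Pi$ as stably free of well-defined rank $\rho=n-(1-\chi(\Gamma(\pi,\{\kappa_j\})))$, and then kill it when $\rho=0$ by stable finiteness of $\mathbb{Z}[\pi]$ (Kaplansky). This is legitimate (the resolution of $H_1(\partial{P};\mathbb{Z}[\pi])$ really can be taken free of finite rank after stabilizing $Z_1$, and your handling of the incompressible components with $\Phi_j=\emptyset$ is right), and it buys a marginally stronger conclusion: $n=1-\chi(\Gamma(\pi,\{\kappa_j\}))$ alone forces $P$ aspherical, with $c.d.\pi=2$ then a consequence rather than an extra hypothesis. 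Two cosmetic points: Poincar\'e duality gives $H_3(P;\mathbb{Z}[\pi])\cong{H^0(P,\partial{P};\mathbb{Z}[\pi])}$ rather than $H^0(P;\mathbb{Z}[\pi])$ (both vanish here, so nothing is affected), and deducing $c.d.\pi\leq2$ from $H^3(\pi;\mathbb{Z}[\pi])=0$ uses that $\pi$ is $FP$ of finite cohomological dimension, which you do note.
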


\begin{proof}
Since $\sigma$ has no virtually free factors it is a free product of groups
with one end, by Lemma \ref{indec nonor}, and so we may apply 
Lemmas \ref{end module} and \ref{short free}.
Let $L=H_1(\partial{P};\mathbb{Z}[\pi])$ and 
$L_j=H_1(S_j;\mathbb{Z}[\pi])$, for $j\in{J}$.
Then $L=\oplus_{j\in{J}}L_j$.
The exact sequence relating $\Pi=\pi_2(P)$ to $\overline{E(\pi)}$
given above reduces to a short exact sequence
\[
0\to\Pi\to\overline{E(\pi)}\to{L}\to0,
\]
since the pair has aspherical boundary.
Let $m$ be the number of one-ended factors of $\pi$.
An application of Schanuel's Lemma shows that $\Pi$ is projective,
since $L$ has a short free resolution,
by Lemma \ref{short free},
and $\overline{E(\pi)}=\mathbb{Z}[\pi]^{m+n-1}$,
by Lemma \ref{end module}.

Applying the tensor product $\mathbb{F}_2\otimes_\pi-$, 
we get a sequence
\[
0\to\mathbb{F}_2\otimes_\pi\Pi\to
\mathbb{F}_2\otimes_\pi\overline{E(\pi)}\to\mathbb{F}_2\otimes_\pi{L}\to0,
\]
since $Tor_1^\pi(\mathbb{F}_2,L)=0$, by Lemma \ref{short free}.

For each $j\in{J}$,
let $\Phi_j$ be a geometric basis for $\kappa_j$,
and let $s_j=|\Phi_j|$.
Then $B_j=\mathrm{Im}(\kappa_j)$ has $s_j+1$ factors.
Since $L_j\cong\mathbb{Z}[\pi]\otimes_{B_j}H_1(S_j;\mathbb{Z}[B_j])$,
we have
\[
\mathbb{F}_2\otimes_\pi{L_j}=
\mathbb{F}_2\otimes_\pi\mathbb{Z}[\pi]\otimes_{B_j}H_1(S_j;\mathbb{Z}[B_j])=
\mathbb{F}_2\otimes_{B_j}H_1(S_j;\mathbb{Z}[B_j]).
\]
Hence 
$\dim\,\mathbb{F}_2\otimes_\pi{L}=\Sigma_{j\in{J}}|\Phi_j|=\Sigma_{j\in{J}}s_j$,
by Lemma \ref{short free}.

Since $\mathbb{F}_2\otimes_\pi\overline{E(\pi)}$ is an extension of 
$\mathbb{F}_2\otimes_\pi{L}$ by $\mathbb{F}_2\otimes_\pi\Pi$,
we see that 
\[
m+n-1=\dim\,\mathbb{F}_2\otimes_\pi\Pi+\Sigma_{j\in{J}}s_j.
\]
Since $\Gamma(\pi,\{\kappa_j\})$ has $m+|J|$ vertices and 
$\Sigma_{j\in{J}}(s_j+1)$ edges,
it has Euler characteristic 
$m+|J|-\Sigma_{j\in{J}}(s_j+1)=m-\Sigma_{j\in{J}}s_j$,
and so 
\[
n=1+\dim\,\mathbb{F}_2\otimes_\pi\Pi-\chi(\Gamma(\pi,\{\kappa_j\}))
\geq1-\chi(\Gamma(\pi,\{\kappa_j\})).
\]
If $P$ is aspherical then $\Pi=0$ and so we have equality.
We then have $c.d.\pi=2$, since $\partial{P}$ is non-empty and $\pi$ is not free.
Conversely, if $n=1-\chi(\Gamma(\pi,\{\kappa_j\}))$ then
$\dim\,\mathbb{F}_2\otimes_\pi\Pi=0$.
If we also have $c.d.\pi=2$ then the projective module $\Pi$ is 0 \cite{Ec86},
and so $P$ is aspherical.
\end{proof}

If $P$ is aspherical  and $n=0$ then  $\chi(\Gamma(\pi,\{\kappa_j\}))=1$.
If, moreover, 
$(P,\partial{P})=\natural^r(P_i,\partial{P_i})$ where $\pi_1(P_i)$ has one end,
for all $i$, 
then $\Gamma(\pi,\{\kappa_j\})$ is connected,
and so it is a tree. 
The converse follows from Theorem \ref{asph boundary} below:
if $\pi$ has no nontrivial virtually free factor and $\Gamma(\pi,\{\kappa_j\})$ 
is a tree then $(P,\partial{P})$ is such a boundary-connected sum.

When $c.d.\pi=2$ the spectral sequence of the universal cover 
$\widetilde{P}\to{P}$ gives another exact sequence
\[
0\to\mathbb{F}_2\otimes_\pi\Pi\to
{H_2(P;\mathbb{F}_2)}\to{H_2(\pi;\mathbb{F}_2)}\to0,
\]
and so 
$\dim\,\mathbb{F}_2\otimes_\pi\Pi=\beta_2(P)-\beta_2(\pi)=\chi(P)-\chi(\pi)$.
If we could show that $\chi(P)-\chi(\pi)\geq{b-1}$, 
where $b=\beta_0(\Gamma(\pi,\{\kappa_j\}))$
then the condition in Theorem \ref{asph boundary}
below that there be a free factor of sufficiently large rank 
would be necessary.
It clearly holds if $b=1$, or if $P$ is a connected sum of $b$ such
pairs with non-empty boundary.
(Note also that forming connected sums with copies of $S^2\times{S^1}$
or $S^2\tilde\times{S^1}$ changes both $n$ and $\chi(\pi)$, 
but does not change the boundary $\partial{P}$,
the graph $\Gamma(\pi,\{\kappa_j\})$, the factor $\sigma$
or the sum $n+\chi(\pi)=\chi(\sigma)$.)

We show next that free factors of peripheral subgroups contribute
to free factors of $\pi$.

\begin{lemma}
\label{restriction}
Let $(P,\partial{P})$ be a $PD_3$-pair, and let $\pi=\pi_1(P)$.
Let $S$ be an aspherical boundary component, 
and let $B$ be the image of $\pi_1(S)$ in $\pi$. 
Then restriction maps $H^1(\pi;\mathbb{Z}[\pi])$ onto
$H^1(B;\mathbb{Z}[\pi])$.
\end{lemma}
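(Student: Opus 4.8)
The plan is to exploit Poincaré duality to translate both sides of the asserted surjectivity into a statement about a connecting homomorphism in the homology long exact sequence of the pair $(P,\partial P)$ with coefficients $\mathbb{Z}[\pi]$. By Poincaré duality, $H^1(\pi;\mathbb{Z}[\pi]) \cong H^1(P;\mathbb{Z}[\pi]) = \overline{E(\pi)}$ is isomorphic to $H_2(P,\partial P;\mathbb{Z}[\pi])$, as recorded in \S2. Similarly, since $S$ is an aspherical boundary component, $H^1(B;\mathbb{Z}[\pi])$ should be identified — via a shift/induction argument and duality for the $PD_2$-complex $S$ — with a direct summand of $H_1(\partial P;\mathbb{Z}[\pi])$ coming from that boundary component. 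Under these identifications, the restriction map $H^1(\pi;\mathbb{Z}[\pi]) \to H^1(B;\mathbb{Z}[\pi])$ should correspond (up to the conjugate-dual involution and sign) to the projection of the connecting homomorphism $\partial\colon H_2(P,\partial P;\mathbb{Z}[\pi]) \to H_1(\partial P;\mathbb{Z}[\pi])$ onto the summand indexed by $S$.

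Having set this up, surjectivity onto $H^1(B;\mathbb{Z}[\pi])$ amounts to showing that the relevant piece of this connecting map is onto, equivalently that the map $H_1(\partial P;\mathbb{Z}[\pi]) \to H_1(P;\mathbb{Z}[\pi])$ kills the $S$-summand, or more precisely that the image of $H_2(\partial P;\mathbb{Z}[\pi])$ does not obstruct surjectivity onto that summand. Here I would invoke the four-term exact sequence from \S2,
\[
0 \to H_2(\partial P;\mathbb{Z}[\pi]) \to \Pi \to \overline{E(\pi)} \to H_1(\partial P;\mathbb{Z}[\pi]) \to 0,
\]
whose last map is precisely the connecting homomorphism in question. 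Its exactness on the right already gives surjectivity of $\overline{E(\pi)} \to H_1(\partial P;\mathbb{Z}[\pi])$; composing with the projection onto the $S$-summand and re-dualizing gives the desired surjection $H^1(\pi;\mathbb{Z}[\pi]) \to H^1(B;\mathbb{Z}[\pi])$. The point that needs care is that restriction along $\kappa\colon \pi_1(S) \to \pi$ on cohomology with $\mathbb{Z}[\pi]$-coefficients genuinely matches the geometric boundary inclusion in homology after applying Poincaré duality on both $P$ and on $S$ (using that $S$ is an oriented-or-not $PD_2$-complex with $w_1(S) = w\circ\kappa$, so the twisted duality isomorphism for $S$ is available).

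The main obstacle I anticipate is this naturality/compatibility check: verifying that the square relating (i) restriction $H^1(P;\mathbb{Z}[\pi]) \to H^1(S;\mathbb{Z}[\pi])$, (ii) the duality isomorphisms for $P$ and for $S$ (the latter requiring a change-of-rings identification $H^1(S;\mathbb{Z}[\pi]) \cong H^1(S;\mathbb{Z}[B]\otimes_{\mathbb{Z}[B]}\mathbb{Z}[\pi])$ and then the $PD_2$ duality over $\mathbb{Z}[B]$ induced up to $\mathbb{Z}[\pi]$), and (iii) the inclusion-induced map $H_1(S;\mathbb{Z}[\pi]) \to H_1(\partial P;\mathbb{Z}[\pi]) \hookrightarrow$ the boundary of the pair — commutes up to the $w$-twisted involution. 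This is a standard but slightly delicate diagram chase with twisted coefficients; once it is in place, the result is immediate from exactness of the displayed sequence. An alternative, if the diagram chase proves awkward, is to argue directly at the chain level: restrict a finite projective model of $C_*(P;\mathbb{Z}[\pi])$ to the collar of $S$, use that $\widetilde{S} \hookrightarrow \widetilde{P}$ and a relative cocycle representing a class in $H^1(B;\mathbb{Z}[\pi])$ extends over $\widetilde{P}$ because the obstruction lives in $H^2$ of the pair, which duality identifies with $H_1(P;\mathbb{Z}[\pi])$, and that obstruction vanishes on the nose after the extension is chosen compatibly.
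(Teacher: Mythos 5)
Your first step is sound and is essentially the paper's, just phrased homologically: the surjectivity of $\overline{E(\pi)}\to H_1(\partial P;\mathbb{Z}[\pi])$ in the four-term sequence is (via the naturality of cap product with $[P,\partial P]$) the same statement as the surjectivity of the restriction $H^1(P;\mathbb{Z}[\pi])\to H^1(\partial P;\mathbb{Z}[\pi])$, which the paper gets in one line from the cohomology sequence of the pair together with $H^2(P,\partial P;\mathbb{Z}[\pi])\cong \overline{H_1(P;\mathbb{Z}[\pi])}=0$. So the "delicate naturality check" you worry about is real but standard, and either formulation works.

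The genuine gap is in your second step: you propose to identify $H^1(B;\mathbb{Z}[\pi])$ with the $S$-summand $H_1(S;\mathbb{Z}[\pi])$ of $H_1(\partial P;\mathbb{Z}[\pi])$ "via a shift/induction argument and duality for the $PD_2$-complex $S$". But $PD_2$-duality for $S$ identifies $\overline{H_1(S;\mathbb{Z}[\pi])}$ with $H^1(S;\mathbb{Z}[\pi])=H^1(\pi_1(S);\mathbb{Z}[\pi])$, the cohomology of the \emph{surface group}, not of its image $B=\kappa(\pi_1(S))$. These are different objects a priori: since $\kappa$ factors as $\pi_1(S)\twoheadrightarrow B\hookrightarrow\pi$, inflation gives only an injection $H^1(B;\mathbb{Z}[\pi])\hookrightarrow H^1(S;\mathbb{Z}[\pi])$ (injective by the five-term exact sequence for $1\to\mathrm{Ker}(\kappa)\to\pi_1(S)\to B\to1$, using that $\mathrm{Ker}(\kappa)$ acts trivially on $\mathbb{Z}[\pi]$), and nothing in your argument shows this injection is onto. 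As written, your duality argument proves surjectivity of restriction onto $H^1(S;\mathbb{Z}[\pi])$, which is not literally the statement of the lemma. The repair is exactly the paper's second observation, and it makes the identification you seek unnecessary: the restriction $H^1(\pi;\mathbb{Z}[\pi])\to H^1(S;\mathbb{Z}[\pi])$ factors through $H^1(B;\mathbb{Z}[\pi])$, so if the composite is onto and the second map is injective, the first map $H^1(\pi;\mathbb{Z}[\pi])\to H^1(B;\mathbb{Z}[\pi])$ is onto. You should add this factorization-plus-injectivity step; with it, your proof closes, and without it the conclusion about $H^1(B;\mathbb{Z}[\pi])$ does not follow from what you have established.
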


\begin{proof}
Since $H^2(P,\partial{P};\mathbb{Z}[\pi])=H_1(P;\mathbb{Z}[\pi])=0$,
restriction maps the end module
$H^1(P;\mathbb{Z}[\pi])=H^1(\pi;\mathbb{Z}[\pi])$ onto
$H^1(\partial{P};\mathbb{Z}[\pi])$.
The projection of $H^1(\partial{P};\mathbb{Z}[\pi])$ onto its summand 
$H^1(S;\mathbb{Z}[\pi])$ factors through $H^1(B;\mathbb{Z}[\pi])$, 
which is a subgroup of $H^1(S;\mathbb{Z}[\pi])$,
since $B$ is a quotient of $\pi_1(S)$.
Hence restriction maps $H^1(\pi;\mathbb{Z}[\pi])$ onto
$H^1(B;\mathbb{Z}[\pi])$.
\end{proof}

\begin{lemma}
\label{free factor}
Let $B<G$ be groups such that restriction maps $H^1(G;\mathbb{Z}[G])$ onto
$H^1(B;\mathbb{Z}[G])$. 
If $b\in{B}$ generates a free factor of $B$ then its image in
$G$ generates a free factor of $G$.
\end{lemma}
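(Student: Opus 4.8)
The plan is to use the standard cohomological characterization of free-factor
generators: if $b\in B$ generates a free factor, then $B\cong\langle b\rangle * B'$
for some complement $B'$, and the corresponding Mayer--Vietoris (Chiswell) sequence
for this free-product decomposition shows that the restriction of
$H^1(B;\mathbb{Z}[B])$ to the infinite-cyclic factor $\langle b\rangle$ is onto,
indeed identifies a free direct summand $\mathbb{Z}[B]$ of $H^1(B;\mathbb{Z}[B])$
dual to $b$. The key point is a criterion that detects a free-factor generator $b$
of $G$ in terms of the end module $H^1(G;\mathbb{Z}[G])$: there is an element of
$H^1(G;\mathbb{Z}[G])$ whose image in $H^1(\langle g\rangle;\mathbb{Z}[G])$
generates it as a $\mathbb{Z}[G]$-module, where $g$ is the image of $b$. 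Such a
criterion is exactly what Dunwoody's accessibility machinery
\cite[Corollary IV.5.3]{DD}, already invoked in the introduction, is designed to
supply; alternatively one argues directly via the Bass--Serre tree of a suitable
splitting of $G$.

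First I would reduce to showing that $g=\kappa(b)$ (the image of $b$ in $G$) has
infinite order and that the coset map $G\to\langle g\rangle\backslash G$ gives a
$\mathbb{Z}[G]$-module surjection from the end module. Concretely: by hypothesis
restriction $H^1(G;\mathbb{Z}[G])\twoheadrightarrow H^1(B;\mathbb{Z}[G])$ is onto.
Tensoring the short free resolution of the augmentation ideal $I(B)$ coming from the
free-product decomposition $B=\langle b\rangle * B'$ up to $\mathbb{Z}[G]$, and
using $H^1(B;\mathbb{Z}[G])\cong\mathrm{Ext}^1_{\mathbb{Z}[B]}(\mathbb{Z},\mathbb{Z}[G])$,
one sees that $H^1(B;\mathbb{Z}[G])$ has $\mathbb{Z}[G]\otimes_{\langle b\rangle}
H^1(\langle b\rangle;\mathbb{Z}[\langle b\rangle])\cong\mathbb{Z}[G]/(b-1)\mathbb{Z}[G]$
as a direct summand. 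Chasing this summand back through the restriction surjection
from $H^1(G;\mathbb{Z}[G])$ produces a class $e\in H^1(G;\mathbb{Z}[G])$ whose
restriction to $\langle g\rangle$ generates $H^1(\langle g\rangle;\mathbb{Z}[G])
\cong\mathbb{Z}[G]/(g-1)\mathbb{Z}[G]$.

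Second, I would invoke the structure theory for groups with such an end class.
A nonzero class in $H^1(G;\mathbb{Z}[G])$ restricting onto
$H^1(\langle g\rangle;\mathbb{Z}[G])$ is precisely the data of an action of $G$ on a
tree with trivial edge stabilizers in which $g$ acts hyperbolically with translation
length one (this is the content of Dunwoody's criterion, or of the Stallings--Swan
theory of ends combined with accessibility as in \cite[Corollary IV.5.3]{DD}). Such
a splitting exhibits $\langle g\rangle$ as a free factor of $G$: one gets a
decomposition $G\cong G_0 * \langle g\rangle$ where $G_0$ is the stabilizer of a
suitable subtree. Finally, translating back, since $G=\pi$ is finitely generated and
$FP_2$, everything is effective and the free factor is a genuine one, which gives the
conclusion.

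The main obstacle will be setting up the detection criterion cleanly — that is,
proving that a $\mathbb{Z}[G]$-module surjection
$H^1(G;\mathbb{Z}[G])\twoheadrightarrow\mathbb{Z}[G]/(g-1)\mathbb{Z}[G]$
for $g$ of infinite order forces $\langle g\rangle$ to be a free factor. The
delicate part is ruling out the possibility that $g$ lies in a one-ended (or
virtually free but non-cyclic) indecomposable factor of $G$: there the end module
restricts to something that is not free of rank one over the factor's group ring, so
a counting/rank argument using Lemma \ref{end module} (applied to the Grushko
decomposition of $G$) together with the Kurosh subgroup theorem should force $g$ into
an infinite-cyclic free factor. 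I would handle this by first splitting $G$ along its
Grushko decomposition, observing that restriction to each one-ended factor's group
ring kills the candidate summand $\mathbb{Z}[G]/(g-1)\mathbb{Z}[G]$ unless $g$ is
conjugate into the free part, and then finishing with the elementary fact that an
element of infinite order in $F(n)$ generating a free factor of $F(n)$ does so
compatibly with the ambient decomposition.
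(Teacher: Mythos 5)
Your plan is built on the right tool --- the characterization of free-factor generators in \cite[Corollary IV.5.3]{DD} --- and your first step (producing a class of $H^1(G;\mathbb{Z}[G])$ whose restriction to $\langle g\rangle$ is the class of $1$ in $\mathbb{Z}[G]/(g-1)\mathbb{Z}[G]$) is essentially the paper's first step. The paper's version is cleaner, though: it works with derivations rather than cohomology classes. Since restriction obviously carries principal derivations onto principal derivations, surjectivity of restriction on $H^1(-;\mathbb{Z}[G])$ upgrades to surjectivity on $\mathrm{Der}(-;\mathbb{Z}[G])$; the Dunwoody criterion gives a derivation $\delta:B\to\mathbb{Z}[B]\subset\mathbb{Z}[G]$ with $\delta(b)=1$ exactly, this extends to $\delta_G:G\to\mathbb{Z}[G]$ with $\delta_G(b)=1$, and the criterion applied a second time finishes the proof. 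Working at the level of cohomology classes, as you do, you must still descend to an actual derivation with value exactly $1$ (a generator of the cyclic module $\mathbb{Z}[G]/(g-1)\mathbb{Z}[G]$ is not enough; you need the class of $1$, and then an adjustment by a principal derivation), which is precisely the point the derivation-level argument makes transparent.

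The genuine gap is in your second step. Having produced the class $e$, you do not simply apply \cite[Corollary IV.5.3]{DD} in the reverse direction (which is an if-and-only-if and already contains the ``delicate part'' you worry about); instead you propose to re-derive that direction via the Grushko decomposition, Lemma \ref{end module}, and the Kurosh theorem. As sketched this does not close: your argument for one-ended factors (the end module of such a factor vanishes, so any derivation restricts to a principal one and cannot take the value $1$ on an infinite-order element) is fine, but it says nothing about $g$ lying in an indecomposable virtually free factor that is not infinite cyclic, and your final reduction --- ``an element of infinite order in $F(n)$ generating a free factor of $F(n)$ does so compatibly with the ambient decomposition'' --- assumes the very conclusion you are trying to establish for the free part (compare $g=x^2$ in $F(x)$, which is conjugate into the free part but generates no free factor; ruling this out is again the content of the criterion). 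Either cite the corollary for both implications, as the paper does, or be prepared to prove a substantial piece of Stallings--Dunwoody theory from scratch.
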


\begin{proof}
The first cohomology group $H^1(G;M)$ of $G$ with coefficients $M$ is 
the quotient of the group of $M$ valued derivations $Der(G;M)$ by the principal
derivations $Pr(G;M)$ \cite[Exercise III.1.2]{Br}.
Since restriction clearly maps $Pr(G;M)$ onto $Pr(B;M)$, for any $M$,
the hypothesis implies that restriction maps $Der(G;\mathbb{Z}[G])$ onto 
$Der(B;\mathbb{Z}[G])$.
If $b\in{B}$ generates a free factor of $B$ then there is a derivation 
$\delta:B\to\mathbb{Z}[B]$ such that $\delta(b)=1$ \cite[Corollary IV.5.3]{DD}.
This may be viewed as a derivation with values in $\mathbb{Z}[G]$,
and so is the restriction of a derivation $\delta_G:G\to\mathbb{Z}[G]$.
A second application of \cite[Corollary IV.5.3]{DD} now shows that $b$
generates a free factor of $G$, since $\delta_G(b)=\delta(b)=1$.
\end{proof}

An immediate consequence of these two lemmas is that if $(P,\partial{P})$
is a peripherally torsion free $PD_3$-pair such that $\pi=\pi_1(P)$ has 
no free factors then the image of $\pi_1(Y_j)$ in $\pi$ is a free product 
of $PD_2$-groups, for each boundary component $Y_j$.

\section{the decomposition theorems}

There are two distinct notions of connected sum for $PD_3$-pairs, 
corresponding to the interior and boundary connected sums of manifolds
\cite{Bl}.
The peripheral system of an interior connected sum is the free product of
the peripheral systems of the factors, 
while that of the boundary connected is slightly more complicated.
When we use the term ``connected sum" without further qualification, 
we shall allow both possibilities.
In each case, the fundamental group of the ambient space is a free product.

The sum of two geometric group systems $(G_1,\mathcal{K}_1)$
and $(G_2,\mathcal{K}_2)$ is the geometric group system
$(G_1,\mathcal{K}_1)\sharp(G_2,\mathcal{K}_2)=
(G_1*G_2,\mathcal{K}_1\sqcup\mathcal{K}_2)$.
A geometric group system {\it splits sharply\/} if it is isomorphic to 
such a sum.

Suppose that for some $\kappa:S\to{G}$ in $\mathcal{K}$ 
there is a non-trivial element $\gamma\in\mathrm{Ker}(\kappa)$
which is represented by a separating simple closed curve $C$ on $Y=K(S,1)$.
Then $Y/C\simeq{Y_1}\vee{Y_2}$, 
where $Y_1$ and $Y_2$ are closed surfaces,
and $\kappa$ factors through 
$S/\langle\langle\gamma\rangle\rangle\cong{S_1*S_2}$,
where $S_i=\pi_1(Y_i)$, for $i=1,2$.
Let $\kappa(i):S_i\to{G}$ be the induced homomorphisms,
and let $\mathcal{K}^\natural$ be the geometric system 
obtained by replacing $\kappa$ by the pair $\kappa(1), \kappa(2)$.
If $(G,\mathcal{K}^\natural)$ splits sharply 
(as $(G_1,\mathcal{K}_1)\sharp(G_2,\mathcal{K}_2)$, say)
we shall say that
$(G,\mathcal{K})$ {\it splits as a boundary sum}.
We may refer to $(G_1,\mathcal{K}_1)$ and 
$(G_2,\mathcal{K}_2)$ as components of $(G,\mathcal{K})$,
for either notion of splitting.

Let $(G,\mathcal{K})$ be a geometric group system, 
$w:G\to\mathbb{Z}^\times$  an orientation character and
$\mu$ a class in $H_3(G,\mathcal{K};\mathbb{Z}^w)$.
If $\gamma\in {S}$ is represented by a separating simple closed curve 
then $w(\gamma)=1$.
Hence if $\theta:G_1*G_2\to{G}$ is an isomorphism underlying a splitting 
(of either kind) then the restrictions of $w\circ\theta$ to the factors define orientation characters for the component geometric systems.
Similarly,  $\theta$ induces a direct sum splitting 
\[
H_3(G,\mathcal{K};\mathbb{Z}^w)\cong
{H_3(G_1,\mathcal{K}_1;\mathbb{Z}^w)}\oplus
{H_3(G_2,\mathcal{K}_2;\mathbb{Z}^w)},
\]
and so $\mu$ determines classes $\mu_1$ and $\mu_2$ in the summands.

The Decomposition Theorems of \cite{Bl} are formulated
in terms of pairs with $\pi_1$-injective aspherical boundaries.
The  hypothesis of $\pi_1$-injectivity is only used (via the Realization Theorem)
to show that the factors of the peripheral system are 
the peripheral systems of $PD_3$-pairs, 
and her arguments establish the following assertion

 {\it If $\mu$ satisfies the boundary compatibility and Turaev conditions 
then so do $\mu_1$ and $\mu_2$.}\\
We shall invoke this as the ``essence of the Decomposition Theorems",
since the enunciations of these theorems in \cite{Bl} assumes $\pi_1$-injectivity.

There is another construction which adds a free factor to the
the fundamental group.
(We assume here that the ambient space $P$ is connected.)
If $\partial{P}$ is non-empty then we may add a 1-handle 
by identifying a pair of discs in components of $\partial{P}$ to get a new 
$PD_3$-pair $(Q,\partial{Q})$, with $\pi_1(Q)\cong\pi_1(P)*\mathbb{Z}$.
(The construction involves choices, but we shall not need to be more precise.)

If the discs are in the same component of $\partial{P}$
then $(Q,\partial{Q})$ is the boundary connected sum of $(P,\partial{P})$ 
with $(D^2\times{S^1},T)$ or $(D^2\tilde\times{S^1},Kb)$ 
(depending on the relative orientations of the discs).
The following lemma reflects this construction.

\begin{lemma}
\label{trivial handle}
Let $(G,\mathcal{K})$ be a geometric group system.
If $\gamma\in\mathrm{Ker}(\kappa)$ is represented 
by a non-separating simple closed curve on a surface $Y$,  
for some $\kappa\in\mathcal{K}$, then 
$(G,\mathcal{K})$ splits as a boundary sum with one component
$(\mathbb{Z}, \mathbb{Z}^2)$ or $(\mathbb{Z}, \pi_1(Kb))$.
\end{lemma}

\begin{proof}
We may assume that $\kappa:S=\pi_1(Y)\to{G}$.
If $C$ is a simple closed curve of $Y$ representing $\gamma$ 
then $Y\cong{Y_1}\sharp{U}$, where $U\cong{T}$ or $Kb$
(depending on $w(\gamma)$,
and $C$ is a meridian of $U$.
Hence $S/\langle\langle\gamma\rangle\rangle\cong{S_1}*\mathbb{Z}$,
where $S_1=\pi_1(Y_1)$ and the second factor is the image of $\pi_1(U)$.
This free factor represents a free factor of $G$, 
by Lemmas \ref{restriction} and \ref{free factor},
and so $G\cong{G_1}*\mathbb{Z}$.
It is easy to see that $(G,\mathcal{K})$ splits as the boundary sum of a geometric system $(G_1,\mathcal{K}_1)$ and one of
$(\mathbb{Z}, \mathbb{Z}^2)$ or $(\mathbb{Z}, \pi_1(Kb))$.
\end{proof}

There remains one further case,  
exemplified by 3-manifolds with handles connecting
distinct boundary components.
These are neither a connected sum nor a boundary connected sum.
This situation is among those covered by the next lemma,
which extends the arguments of \cite{Bl}.

\begin{lemma}
\label{1-handle loop}
Let $(G,\mathcal{K})$ be a geometric group system,
$w$ an orientation character and 
$\mu\in{H_3(G,\mathcal{K};\mathbb{Z}^w)}$ a class which satisfies 
the boundary compatibility and Turaev conditions. 
Suppose that $G\cong{H}*F(r)$, 
where $H$ has one end and $F(r)$ is free of rank $r\geq1$,
and that the image of each $\kappa\in\mathcal{K}$ is a free product 
of $PD_2$-groups.
The inclusions of these factors determine a geometric group system 
$(H,\mathcal{K}_H)$, and the image of $\mu$ in $H_3(H,\mathcal{K}_H;\mathbb{Z}^w)$ satisfies the boundary compatibility and Turaev conditions. 
\end{lemma}

\begin{proof}
We may assume that $\mathcal{K}=\{\kappa_j|j\in{J}\}$,
where $\kappa_j:S_j\to{G}$ for all $j\in{J}$.
Let $Y_j$ be an aspherical closed surface with $\pi_1(Y_j)\cong{S_j}$
and let $\Phi_j$ be a geometric basis for $\mathrm{Ker}(\kappa_j)$, 
for each $j\in{J}$.
Let $V_j$ be the 2-complex obtained by adjoining one 2-cell to $Y_j$ 
along each curve in $\Phi_j$.
Then $V_j$ is homotopy equivalent to a wedge of aspherical
closed surfaces $Y_{jk}$.
(This uses the assumption that $\mathrm{Im}(\kappa_j)$ is a free product 
of $PD_2$-groups.)
We may choose disjoint discs on each $Y_{jk}$, 
so that after identifying discs in pairs appropriately we recover $V_j$.
Let $\kappa_{jk}:S_{jk}=\pi_1(Y_{jk})\to{G}$
be the induced monomorphism, for each index pair $jk$.
The images of these monomorphisms are each conjugate into the factor $H$, by the Kurosh subgroup theorem,
and so after replacing each $\kappa_{jk}$ by a conjugate homomorphism
we obtain a geometric system $(H,\mathcal{K}_H)$ in which all the homomorphisms in $\mathcal{K}_H$ are injective.

For each index pair $jk$ choose a map from $Y_{jk}$ to $K(H,1)$ which 
induces $\kappa_{jk}$ (up to conjugacy) and sends the chosen discs 
to the base-point.
Let $K_H\simeq{K(H,1)}$ be the mapping cylinder of the disjoint union 
of these maps,
and let $U$ be the quotient of $K_H$ obtained by identifying 
the chosen pairs of discs in $\amalg{Y_{jk}}$.
Then $U\simeq{K(H,1)}\vee{K(F(s),1)}$, for some $s\geq0$.
Let $K=(U\cup{s}.e^2)\vee{F(r)}$, 
where the 2-cells are attached along a basis 
for the free factor $F(s)$.
Then $K\simeq{K(G,1)}$.
Let $W$ be the image of $\amalg_{j\in{J}}{Y_{jk}}$ in $K$. 
Then $W=Y\cup{n}e^2$, where $Y=\amalg{Y_j}$ and 
$n=|\cup_{j\in{J}}\Phi_j|$.
Note also that $W$ may be obtained (up to homotopy) 
by attaching 1-cells to the disjoint union $\amalg{Y_{jk}}$.

We shall compare the pairs $(K,Y)$ and $(K_H,\amalg{Y_{jk}})$
by means of two intermediary pairs.
Since $Y=\amalg_{j\in{J}}Y_j$ is a subcomplex of $W$, 
there is an inclusion of pairs $(K,Y)\to(K,W)$.
Since $W$ may be obtained from $Y$ by attaching 2-cells, 
which represent relative 2-cycles for $(K,Y)$, 
we see that 
$H_3(K,Y;\mathbb{Z}^w)\cong{H_3(K,W;\mathbb{Z}^w)}$.
Since $C_q(K,Y;\mathbb{Z}[G])\cong{C_q(K,W;\mathbb{Z}[G])}$ for all 
$q\not=2$,
while $C_2(K,Y;\mathbb{Z}[G])\cong
{C_2(K,W;\mathbb{Z}[G])}\oplus\mathbb{Z}[G]^n$, we have
\[
F^2(C_*(K,Y;\mathbb{Z}[G]))\cong
{F^2(C_*(K,W;\mathbb{Z}[G]))}\oplus\mathbb{Z}[G]^n.
\]
The natural map from $K^*=K_H\vee{F(r,1)}$ to $K$ defines a homotopy equivalence $(K^*,W^*)\simeq(K,W)$, 
where $W^*$ is the union of $\amalg{Y_{jk}}$ with suitable paths in  $K_H$ 
to the wedge point. 
Since $W^*$ may be obtained from 
$\amalg{Y_{jk}}$ by attaching 1-cells, 
\[
H_3(K_H,\amalg{Y_{jk}};\mathbb{Z}^w)\cong
{H_3(K^*,W^*;\mathbb{Z}^w)}\cong
{H_3(K,Y;\mathbb{Z}^w)}.
\]
Hence
$H_3(H;\mathcal{K}_H;\mathbb{Z}^w)=
H_3(K_H,\amalg{Y_{jk}};\mathbb{Z}^w)\cong
{H_3(G,\mathcal{K};\mathbb{Z}^w)}$,
and so $\mu$ determines a class 
$\mu_H\in{H_3(H,\mathcal{K}_H;\mathbb{Z}^w)}$.
The projections of each surface $Y_j$ onto the surfaces $Y_{jk}$ 
are all degree 1 maps. 
On comparing the long exact sequences of homology for 
$(K_H,\amalg{Y_{jk}})$ and $(K,W)$,
we see that since $\mu$ satisfies the boundary compatibility condition then
so does $\mu_H$.

Every relative 1-cochain for $(W^*,\amalg{Y_{jk}})$ is a 1-cocycle,
and extends to a 1-cocycle for  $(K^*,\amalg{Y_{jk}})$.
Hence $F^2(C_*(K^*,\amalg{Y_{jk}};\mathbb{Z}[G]))$
is isomorphic to $F^2(C_*(K^*,W^*;\mathbb{Z}[G]))$.
This is in turn stably isomorphic to $F^2(C_*(K,W;\mathbb{Z}[G]))$,
and thus to $F^2(C_*(K,Y;\mathbb{Z}[G]))$.
Hence
\[
F^2(C_*(K,Y;\mathbb{Z}[G]))\oplus\mathbb{Z}[G]^a\cong
F^2(C_*(K^*,\amalg{Y_{jk}};\mathbb{Z}[G]))\oplus\mathbb{Z}[G]^b,
\]
for some $a,b\geq0$.

Let $D_*=C_*(K_H,\amalg{Y_{jk}};\mathbb{Z}[H])$.
Let $\alpha$ be the change of coefficients functor 
$\mathbb{Z}[G]\otimes_{\mathbb{Z}[H]}-$, and let $\beta$
be the left inverse induced by the retraction of $G$ onto $H$.
Then  $I(G)=\alpha{I(H)}$ and 
$F^2(C_*(K^*,\amalg{Y_{jk}};\mathbb{Z}[G]))\cong
\alpha{F^2(D_*)}$.
Hence
\[
F^2(C_*(K,Y;\mathbb{Z}[G]))\oplus\mathbb{Z}[G]^a\cong
\alpha{F^2(D_*)}\oplus\mathbb{Z}[G]^b.
\]
Let $f:F^2(D_*)\to{I(H)}$ be a representative of $\nu_{D_*,2}(\mu_H)$.
Then $\nu_{C_*,2}(\mu)$ is represented by the homomorphism 
$
\alpha{f}:\alpha{F^2(D_*)}\to\alpha{I(H)}.
$
Since $\nu_{C_*,2}(\mu)$ is a projective homotopy equivalence
there are finitely generated projective modules $L$ and $M$ 
and a homomorphism $h$ such that the following diagram commutes
\begin{equation*}
\begin{CD}
\alpha{F^2(D_*)}@>\alpha{f}>>\alpha{I(H)}\\
@VVV  @VVV\\
\alpha{F^2(D_*)}\oplus{L}@>h>>\alpha{I(H)}\oplus{M}.
\end{CD}
\end{equation*}
We apply the functor $\beta$.
Clearly $\beta{L}$ and $\beta{M}$ are
finitely generated projective $\mathbb{Z}[H]$-modules.
We obtain a commutative diagram
\begin{equation*}
\begin{CD}
F^2(D_*)@>f>>I(H)\\
@VVV  @VVV\\
F^2(D_*)\oplus\beta{L}
@>\beta{h}>>I(H)\oplus\beta{M},
\end{CD}
\end{equation*}
since $\beta\circ\alpha=id$.
Hence $f$ is the composite
\[
F^2(D_*)\to{F^2(D_*)}\oplus\beta{L}\cong
{I(H)}\oplus\beta{M}\to{I(H)},
\]
where the left- and right-hand maps are the obvious inclusion and projection,
respectively.
Hence $f$ is a projective homotopy equivalence,
and so $\mu_H$ satisfies the Turaev condition.
\end{proof}

We shall use Lemmas \ref{trivial handle} and \ref{1-handle loop}
and the essence of the Decomposition Theorems
inductively in Theorem \ref{asph boundary} below.

We may also add 2-handles.
 
\begin{lemma}
\label{2-handle}
Let $(P,\partial{P})$ be an aspherical $PD_3$-pair 
and let $(P_\gamma,\partial{P_\gamma})$ be the pair obtained by adding a 
$2$-handle along a $2$-sided simple closed curve $\gamma$ in a component 
$Y_j$ of $\partial{P}$.
If $(P_\gamma,\partial{P_\gamma})$ has aspherical boundary and the image of 
$\gamma$ generates a free factor of $\mathrm{Im}(\kappa_j)$ in $\pi=\pi_1(P)$
then $(P,\partial{P})\simeq(P_\gamma,\partial{P_\gamma})\natural(E,\partial{E})$,
where $E=D^2\times{S^1}$.
\end{lemma}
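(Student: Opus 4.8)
The plan is to promote the free-factor hypothesis from $\mathrm{Im}(\kappa_j)$ to $\pi$ itself, observe that all the spaces involved are aspherical, and then identify $(P_\gamma,\partial P_\gamma)\natural(E,\partial E)$ with $(P,\partial P)$ by comparing peripheral systems. First, since $Y_j$ is aspherical, Lemma \ref{restriction} gives that restriction maps $H^1(\pi;\mathbb{Z}[\pi])$ onto $H^1(B_j;\mathbb{Z}[\pi])$, where $B_j=\mathrm{Im}(\kappa_j)$; as $\kappa_j[\gamma]$ generates a free factor of $B_j$, Lemma \ref{free factor} shows it generates a free factor of $\pi$. A short argument (a curve bounding a disc in $Y_j$ would give an $S^2$ component of $\partial P_\gamma$, contrary to hypothesis, and $(P,\partial P)$ is aspherical) shows this factor is infinite cyclic, so $\pi\cong\mathbb{Z}_\gamma\ast N$ with $\mathbb{Z}_\gamma=\langle\kappa_j[\gamma]\rangle$. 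By van Kampen $\pi_1(P_\gamma)=\pi/\langle\langle\kappa_j[\gamma]\rangle\rangle\cong N$, so $\pi\cong\mathbb{Z}_\gamma\ast\pi_1(P_\gamma)$; and $w(\kappa_j[\gamma])=w_1(Y_j)([\gamma])=1$ since $\gamma$ is $2$-sided, so $\mathbb{Z}_\gamma\leq\mathrm{Ker}(w)$. Up to homotopy $P_\gamma\simeq P\cup_\gamma e^2$ is $K(\pi,1)$ with a $2$-cell attached along a generator of the free factor $\mathbb{Z}_\gamma$, hence $P_\gamma\simeq K(\pi_1(P_\gamma),1)$ is aspherical, and $(P_\gamma,\partial P_\gamma)\natural(E,\partial E)$ has ambient space $\simeq K(\pi_1(P_\gamma),1)\vee S^1\simeq K(\pi,1)\simeq P$.

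Next I would match the boundary data. The components $Y_k$ with $k\neq j$ are disjoint from the $2$-handle, so their peripheral maps are unchanged and land in the free factor $\pi_1(P_\gamma)\leq\pi$; and $\partial P_\gamma$ is obtained from $\partial P$ by surgery on $\gamma$, so (when $\gamma$ is non-separating in $Y_j$) the component of $\partial P_\gamma\natural T^2$ meeting the handle is again a closed surface of the genus of $Y_j$, whose peripheral map, reassembled from the surgered surface and the new $S^1$-generator, is conjugate to $\kappa_j$ precisely because $\kappa_j[\gamma]$ generates $\mathbb{Z}_\gamma$. Thus $(P_\gamma,\partial P_\gamma)\natural(E,\partial E)$ and $(P,\partial P)$ have isomorphic peripheral systems and equal orientation characters, with the $\mathbb{Z}_\gamma$-summand realized by $E=D^2\times S^1$ rather than $D^2\tilde\times S^1$ exactly because $w(\kappa_j[\gamma])=1$; and, for $PD_3$-pairs with aspherical ambient space, a fundamental class is pinned down by the boundary compatibility condition once $\pi$, $w$ and $\partial$ are fixed, so the fundamental classes agree too. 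Finally, two $PD_3$-pairs with aspherical ambient spaces and aspherical boundary, isomorphic peripheral systems, equal orientation characters and equal fundamental classes are homotopy equivalent as pairs: a $\pi_1$-isomorphism between the $K(\pi,1)$'s is automatically a homotopy equivalence of ambient spaces, and it may be chosen so as to carry one boundary realization of the common peripheral system to the other. Crucially this last step needs no $\pi_1$-injectivity, because the ambient spaces are aspherical; it is the pair analogue of the classification of aspherical $PD_3$-complexes.

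The main obstacle is the bookkeeping in the previous paragraph, and in particular the case in which $\gamma$ \emph{separates} $Y_j$: then surgery on $\gamma$ disconnects $Y_j$ into two components of $\partial P_\gamma$, and the naive handle picture attaches $E$ so as to rejoin two distinct boundary components --- the non-standard $1$-handle construction, not a boundary connected sum. Here one must exploit the free-factor hypothesis on $\kappa_j[\gamma]$ through a primitivity argument: the boundary curve of a surface piece to one side of $\gamma$ is a product of commutators (orientable side) or of squares (non-orientable side), and such an element can generate a $\mathbb{Z}$ free factor of $B_j$ only in degenerate situations, so that one side of $\gamma$ in fact cuts off a $D^2\times S^1$ boundary-summand of $(P,\partial P)$ and the asserted decomposition holds. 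A secondary, routine, issue is to make the handle moves precise in the category of $PD_3$-pairs, where handles are defined only up to homotopy and ``cancellation of a dual $1$-handle and $2$-handle'' is realized by an elementary expansion and collapse of CW pairs; this also supplies the geometric reason why the extra summand is $(E,\partial E)=(D^2\times S^1,T^2)$.
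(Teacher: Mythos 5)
Your proof is essentially sound, and its first half coincides with the paper's: both arguments promote the free factor from $B_j=\mathrm{Im}(\kappa_j)$ to $\pi$ via Lemmas \ref{restriction} and \ref{free factor}, so that $\pi\cong\rho*\mathbb{Z}$ with the $\mathbb{Z}$ generated by the image of $\gamma$. Where you diverge is in how you conclude. The paper simply chooses an isomorphism $\pi\cong\pi_1(P_\gamma)*\mathbb{Z}$ under which the peripheral systems of $(P,\partial{P})$ and $(P_\gamma,\partial{P_\gamma})\natural(E,\partial{E})$ correspond, and then invokes Bleile's \emph{second Decomposition Theorem} (for boundary connected sums); as remarked in \S4, the $\pi_1$-injectivity in its hypotheses is only used to know that the factors are $PD_3$-pairs, and here both factors are given as such. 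You instead build the candidate pair explicitly, verify that both ambient spaces are aspherical with the same fundamental group, match peripheral systems and fundamental classes (the latter pinned down by boundary compatibility since $c.d.\pi\leq2$ for an aspherical pair with non-empty boundary), and appeal to the \emph{Classification Theorem} for pairs with aspherical boundary --- which, like the paper's use of it in Theorem \ref{sap+vf}, does not require $\pi_1$-injectivity. Both routes are legitimate and both ultimately rest on \cite{Bl}; the Decomposition Theorem absorbs all of your surgery bookkeeping, which is why the paper's proof is three sentences, while your route has the merit of making the asphericity of $P_\gamma$ and the reconstruction of $Y_j$ as $\widehat{Y}_j\#T$ explicit. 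One simplification for your final paragraph: the separating case you worry about cannot occur at all. A $2$-sided simple closed curve on $Y_j$ that separates (or that bounds a M\"obius band) represents a product of commutators or of squares in $\pi_1(Y_j)$, so its image vanishes in $H_1(B_j;\mathbb{F}_2)$, whereas a generator of an infinite cyclic free factor of $B_j$ survives there; hence $\gamma$ is automatically non-separating and only your main case is needed.
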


\begin{proof}
If the image of $\gamma$ generates a free factor of $B_j=\mathrm{Im}(\kappa_j)$
then $\pi\cong\rho*\mathbb{Z}$, by Lemmas \ref{restriction} and
\ref{free factor}.
We then may choose an isomorphism $\pi\cong\pi_1(P_\gamma\sharp{E})$ so that 
the peripheral systems of $(P,\partial{P})$ and
$(P_\gamma,\partial{P_\gamma})\natural(E,\partial{E})$ correspond.
If $(P_\gamma,\partial{P_\gamma})$ has aspherical boundary then
we may apply the second Decomposition Theorem of Bleile \cite{Bl}
to conclude that $(P,\partial{P})\simeq(P_\gamma,\partial{P_\gamma})\natural(E,\partial{E})$.
\end{proof}

When $P=D^2\times{S^1}$ and $\gamma$ is a longitude on $T=\partial{P}$ then $P_\gamma=D^3$. 
Thus even if $\partial{P}$ is aspherical $\partial{P_\gamma}$ may have an $S^2$ component.

Finally, we may add 3-handles, to cap off $S^2$ components of the boundary.
If $(P,\partial{P})$ is a $PD_3$-pair let $(\widehat{P},\partial\widehat{P})$ 
be the pair obtained by capping off each $S^2$ component of $\partial{P}$ 
with a copy of $D^3$.

\section{extending the realization theorem}

The Loop Theorem for 3-manifolds asserts that if the inclusion of 
a boundary component $F$ into a 3-manifold $M$ is not $\pi_1$-injective
then there is an essential simple loop on $F$ which bounds a disc in $M$.
Hence a 3-manifold with compressible boundary 
is either a boundary connected sum or has a 1-handle.
We shall say that {\it the Topological Loop Theorem holds\/} for all  
$PD_3$-pairs in some class if every such pair with compressible boundary
may be obtained from other pairs in that class by boundary connected sum 
or by adding a 1-handle.
Every orientable $PD_3$-pair may be obtained from
a boundary connected sum of $PD_3$-pairs with $\pi_1$-injective boundary by adding $1$-handles if and only if the Topological Loop Theorem holds
for all orientable $PD_3$-pairs.

Unfortunately, the Algebraic Loop Theorem is not yet strong enough 
to establish the Topological Loop Theorem for $PD_3$-pairs.
There remain difficulties related to free factors of the ambient group.

A $PD_3$-pair $(P,\partial{P})$ is {\it well-built\/} if it can be obtained 
by adding 1-handles to a connected sum of $PD_3$-complexes, 
aspherical $PD_3$-pairs $\{(P_i,\partial{P_i})|i\in{I}\}$ with $\pi_1$-injective boundaries and copies of $(D^2\times{S^1},T)$ or 
$(D^2\tilde\times{S^1},Kb)$.
If $\kappa:S\to{G}$ is a geometric homomorphism let $r(\kappa)$ be the number of non-separating curves in a geometric basis for $\kappa$.

\begin{theorem}
\label{asph boundary}
Let $G$ be a finitely presentable group and let $\{\kappa_j:S_j\to{G}|j\in{J}\}$ 
be a finite family of homomorphisms with domains $PD_2$-groups $S_j$. 
Let $w:G\to\mathbb{Z}^\times$ be a homomorphism which does not split
and let $\mu\in{H_3(G,\{\kappa_j\};\mathbb{Z}^w)}$.
If $[(G,\{\kappa_j\}),w,\mu]$ is the fundamental triple of a $PD_3$-pair
then
\begin{enumerate}
\item{}each indecomposable factor of $G$
with more than one end is virtually free;
\item$\kappa_j$ is torsion free geometric and $w\circ\kappa_j=w_1(S_j)$, 
for $j\in{J}$; 
\item{}the images of the free factors of the $\kappa_j(S_j)$s in $G$ 
generate a free factor of rank $r=\Sigma_{j\in{J}}r(\kappa_j)$;
and
\item$\mu$ satisfies the boundary compatibility and Turaev conditions.
\end{enumerate}
Conversely, if $[(G,\{\kappa_j\}),w,\mu]$ satisfies these conditions
and $G$ has a free factor of rank $r+s$,
where $s=\beta_1(\Gamma(G,\{\kappa_j\}))$,
then $[(G,\{\kappa_j\}),w,\mu]$ is the fundamental triple of a 
well-built $PD_3$-pair.
\end{theorem}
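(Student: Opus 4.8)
The plan is to treat the two directions separately, with the forward direction being mostly a collation of results already assembled, and the converse requiring a genuine construction.

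For the forward direction, suppose $[(G,\{\kappa_j\}),w,\mu]$ is the fundamental triple of a $PD_3$-pair $(P,\partial P)$ with $w$ non-splitting. Claim (2) is immediate: since $w$ does not split, the pair is peripherally torsion free (as noted in \S1), and $w\circ\kappa_j=w_1(S_j)$ is part of the boundary compatibility built into the definition of the fundamental triple; that $\kappa_j$ is geometric is the Algebraic Loop Theorem. Claim (1) is Lemma \ref{indec nonor}: if $w$ does not split, an indecomposable factor of $G$ is a $PD_3$-group, has one end with $c.d.=2$, or is virtually free — and the first two cases have one end, so any factor with more than one end is virtually free. Claim (4) (boundary compatibility and Turaev condition) is the content of the Bleile/Turaev necessity statement recalled at the end of \S1. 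Claim (3) is the delicate one: by Lemma \ref{restriction}, restriction maps $H^1(G;\mathbb{Z}[G])$ onto $H^1(B_j;\mathbb{Z}[G])$ for each aspherical component, so by Lemma \ref{free factor} each element generating a free factor of $\kappa_j(S_j)$ maps to an element generating a free factor of $G$. One then must check that the images of the free factors from \emph{all} the $\kappa_j$'s together generate a free factor of the asserted rank $r=\sum_j r(\kappa_j)$; here I would argue by iterating Lemma \ref{free factor} (splitting off one $\mathbb{Z}$ at a time) and using that, after splitting off a free factor, the restriction-surjectivity hypothesis persists for the complementary group — or, more cleanly, invoke the Dunwoody criterion \cite[Corollary IV.5.3]{DD} directly for the family of derivations obtained by combining the individual ones, checking they remain independent because the corresponding curves lie in distinct boundary components or are disjoint non-separating curves on a single component.

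For the converse, suppose the triple satisfies (1)--(4) and $G$ has a free factor of rank $r+s$, where $s=\beta_1(\Gamma(G,\{\kappa_j\}))$. Write $G\cong(\ast_{i=1}^m G_i)\ast F(n)$ with each $G_i$ having one end. The strategy is to reduce to Theorem \ref{turaev condition}. First, split off from the ambient group the rank-$r$ free factor carrying the images of the free parts of the peripheral groups: for each non-separating curve in $\Phi_j$ the corresponding $PD_2$-group $\kappa_j(S_j)$ loses a $\mathbb{Z}$ free factor, and geometrically this corresponds to peeling off a $(D^2\times S^1, T)$ summand via (the $PD$-analogue of) adding/removing a $2$-handle — this is exactly the mechanism of Lemma \ref{2-handle} read in reverse. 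After this reduction, the peripheral groups $B_j=\mathrm{Im}(\kappa_j)$ are free products of $PD_2$-groups with no free factors, so we are in the situation of Theorem \ref{turaev condition}, except possibly with $\beta_1(\Gamma)\neq 0$. To handle the remaining $s=\beta_1(\Gamma)$ loops in the graph, I would use the remaining part of the free factor of $G$: each independent cycle in $\Gamma(G,\{\kappa_j\})$ is killed by adding a $1$-handle joining two boundary components (the construction described just before Lemma \ref{2-handle}), which contributes a $\mathbb{Z}$ free factor to $\pi_1$. Thus after realizing the ``tree part'' via Theorem \ref{turaev condition}(2) as a $PD_3$-pair, and then adding $s$ one-handles and $r$ copies of $(D^2\times S^1,T)$ in the appropriate pattern, one obtains a $PD_3$-pair whose peripheral system is $(G,\{\kappa_j\})$ and whose fundamental class is $\mu$ (uniqueness of the extension of the fundamental class under these elementary moves follows from the boundary compatibility condition together with the relative version of Turaev's analysis). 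Finally, $w$ does not split on $G$ by hypothesis, and none of the elementary moves ($1$-handles, $D^2\times S^1$ summands) introduces $2$-torsion in the orientation-reversing part, so $w$ does not split on the constructed pair either.

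The main obstacle, as the surrounding text itself flags, is bookkeeping the free-factor rank precisely — i.e.\ verifying that exactly $r+s$ worth of free factor is consumed, that the $1$-handles can be attached along the correct pairs of boundary components so that the resulting graph of peripheral incidences is $\Gamma(G,\{\kappa_j\})$ rather than some other graph with the same first Betti number, and that at each stage the hypotheses of Bleile's Decomposition Theorems and of Theorem \ref{turaev condition} genuinely apply (in particular that the intermediate objects really are $PD_3$-pairs, which is where the Realization Theorem for $\pi_1$-injective boundary does the essential work). The delicate point within the forward direction is the passage from ``each $\kappa_j$ contributes a free factor'' to ``the combined images form a single free factor of rank $r$'': this is where the Dunwoody criterion and the disjointness/distinctness of the relevant curves must be used carefully, and it is the one place where a careless argument would only give that the rank of the maximal free factor is \emph{at least} $r$ rather than that a free factor of exactly this rank is generated by the specified elements.
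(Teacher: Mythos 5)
Your overall architecture matches the paper's: the forward direction is assembled from Lemma \ref{indec nonor}, the Algebraic Loop Theorem and Lemmas \ref{restriction} and \ref{free factor}, and the converse reduces to Theorem \ref{turaev condition} by peeling off a $(D^2\times{S^1},T)$ or $(D^2\tilde\times{S^1},Kb)$ boundary-connected summand for each non-separating curve and adding $1$-handles for the cycles of $\Gamma(G,\{\kappa_j\})$. There is, however, a concrete gap in your converse: you write $G\cong(\ast_{i=1}^mG_i)\ast{F(n)}$ with each $G_i$ one-ended, but condition (1) does not force this. Since only an orientation-\emph{reversing} involution is excluded by the non-splitting of $w$, the group $G$ may have indecomposable factors which are finite or virtually free with infinitely many ends (e.g.\ a quaternionic factor). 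The paper writes $G\cong(\ast_{i\in{I}}G_i)\ast{V}\ast{F(t)}$ with $V$ virtually free and with no $\mathbb{Z}$ free factor, checks that no free factor of any $B_j$ can lie in $V$ (it would be infinite cyclic, hence a free factor of $V$ by Lemmas \ref{restriction} and \ref{free factor}, contrary to the choice of $V$), and realizes $[V,w|_V,\mu_V]$ by a $PD_3$-\emph{complex} $X_V$ via a variation on part (1) of Theorem \ref{turaev condition}, which is then incorporated by an interior connected sum. Your proof as written does not apply when $G$ has torsion. Relatedly, your final count of ``$s$ one-handles and $r$ solid tori'' only produces a free factor of rank exactly $r+s$, whereas the hypothesis is that $G$ \emph{has} a free factor of rank $r+s$; if its actual rank $t$ exceeds $r+s$ the surplus must be absorbed by further $1$-handles (the paper notes that $t$ handles are needed to complete the construction).

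Two smaller points. For condition (3) in the forward direction you rightly flag the passage from ``each free factor of each $B_j$ gives a free factor of $G$'' to ``together they generate a free factor of rank $r$'', but your proposed fixes are sketches, and the assertion that restriction-surjectivity persists for the complementary group after splitting off one $\mathbb{Z}$ is precisely what would need proof. The paper's device is different: add $1$-handles to connect the components of $\partial{P}$, which replaces $\pi$ by $\pi\ast{F(n)}$ without changing the subgroup generated by the images of the boundary components, and so reduces (3) to the case of connected boundary, where Lemmas \ref{restriction} and \ref{free factor} apply to the single image $B\cong(\ast{S_p})\ast{F(r)}$. Finally, in the inductive step of the converse the paper works algebraically with the group system (replacing $\kappa_j$ by a map factoring through $\widehat{S}_j\ast{T}$ with $\chi(\widehat{S}_j)=\chi(S_j)+2$, the image of $T$ being $\langle\delta\rangle$) rather than literally inverting Lemma \ref{2-handle}, since that lemma carries asphericity hypotheses that need not hold here; only in the orientable case does the paper invoke Lemma \ref{2-handle} directly.
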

 
\begin{proof}
Let $(P,\partial{P})$ be a $PD_3$-pair such that $w=w_1(P)$ does not split.
Then the indecomposable factors of $\pi=\pi_1(P)$ are either one-ended
or virtually free, by Lemma \ref{indec nonor}.
Condition (2) holds by the Algebraic Loop Theorem.
(The peripheral homomorphisms are torsion free since $w$ does not split.)
Condition (3) holds as a consequence of Lemmas \ref{restriction} 
and \ref{free factor}.
The boundary compatibility and Turaev conditions are satisfied, 
as we saw in \S1.
Thus (1)--(4) are necessary conditions.

Suppose that these conditions hold.
We shall use Lemmas \ref{trivial handle} and \ref{1-handle loop} 
and the essence of the Decomposition Theorems
inductively, to replace the geometric system by simpler components
with fundamental classes which still satisfy the boundary  compatibility and Turaev conditions.
Reassembling the resulting pieces is then straightforward.

The first step is to use Lemma \ref{trivial handle}
repeatedly to arrange that $r=0$,
i.e., that $\mathrm{Im}(\kappa_j)$ is a free product of $PD_2$-groups
$S_{jk}$, for $j\in{J}$.
The ambient group $G$ then has a factorization 
$G\cong(*_{i\in{I}}G_i)*G_\omega*F(t)$, 
where $G_i$ has one end, for all $i\in{I}$, 
$G_\omega$ is virtually free,  
but has no $\mathbb{Z}$ free factor,  and $t\geq{s}$.
The indecomposable factors of $\mathrm{Im}(\kappa_j)$ 
are then $PD_2$-groups, 
and so are each conjugate into one of the factors $G_i$, for some $i\in{I}$,
by the Kurosh Subgroup Theorem.
Let $\mathcal{K}_i$ be the family of inclusions of the factors of
$\amalg_{j\in{J}}\mathrm{Im}(\kappa_j)$ which are conjugate to 
subgroups of $G_i$,  for $i\in{I}$.
(The analogous set $\mathcal{K}_\omega$ is empty, of course.)

The second step is to use the essence of the First Decomposition Theorem 
(sharp splitting) to split off the factor $G_\omega$ and to
reduce to the case when the graph $\Gamma(G,\{\kappa_j\})$ is connected.
We may also retain the condition $t\geq{s}$, 
since $\beta_1(\Gamma(G,\{\kappa_j\}))$ is the sum of the $\beta_1$s of the 
components of $\Gamma$.

The third step is to use the essence of the second Decomposition Theorem
(boundary sum splitting)
to reduce further to the case when $|I|=1$.
Finally we use Lemma \ref{1-handle loop} to reduce to the case when
all the homomorphisms in the geometric system are injective,
and so $s=0$.
At each stage, $\mu$ determines homology classes for the component systems which satisfy the  boundary compatability and Turaev conditions.

At the final stage,
each triple $[(G_i,\mathcal{K}_i),w|_{G_i},\mu_i]$ determines 
a $PD_3$-pair $(X_i,\partial{X_i})$ with aspherical boundary and 
$\pi_1$-injective peripheral system, by Bleile's theorem 
\cite[Theorem 1.2]{Bl}.
Note also that $\Gamma(G_i,\mathcal{K}_i)$ is a tree.
Hence if $B\in\mathcal{K}_k$ and $C\in\mathcal{K}_\ell$ are distinct factors 
of the image of $\kappa_j(S_j)$ then $\ell\not=k$.
We form the boundary connected sum of 
$(X_k,\partial{X_k})$ with $(X_\ell,\partial{X_\ell})$ 
along the corresponding boundary components, 
and repeat this process until we have a family of connected $PD_3$-pairs
(one for each component of $\Gamma(G,\{\kappa_j\})$).
We then add further 1-handles as needed,
to obtain the correct graph $\Gamma(G,\{\kappa_j\})$.
Similarly, we may realize $[G_\omega,w|_{G_\omega}, \mu_\omega]$ by a $PD_3$-complex $X_\omega$.
The (interior) connected sum of all such pairs 
with a closed 3-manifold $M$ with  fundamental group $F(t-s)$
gives a $PD_3$-pair with fundamental triple $[(G,\{\kappa_j\}),w,\mu]$.
(The 1-handles and the 3-manifold summand should be chosen compatibly with $w$.) It is clear that the construction gives a well-built pair.
\end{proof}

\begin{cor}
If $w$ does not split then $[(G,\{\kappa_j\}),w,\mu]$ is 
the fundamental triple of a well-built $PD_3$-pair if and only if
$G$ has a free factor of rank $r+s$ and conditions (1)--(4) hold.
\qed
\end{cor}

If $(P,\partial{P})$ is a $PD_3$-pair such that virtually free factors of 
$\pi_1(P)$ are free then one might expect to apply Lemma \ref{min gen} 
after the first step, 
in order to show that having a free factor of rank $r+s$ is always necessary.
Unfortunately,  we have been unable to show without this hypothesis
that the reductive steps above preserve the property of 
being realizable by a $PD_3$-pair.

Theorem \ref{asph boundary} could be reformulated as giving criteria
for a triple to be ``stably realizable", i.e.,  
realizable after replacing $G$ by $G*F(m)$ for some $m\geq0$, 
and each $\kappa_j$ by its composite with the inclusion of $G$ into $G*F(m)$.
Let {\it stable equivalence\/} of $PD_3$-pairs be the equivalence relation generated by taking connected sums with copies of $S^2\times{S^1}$.

\begin{cor} 
Let  $(P,\partial{P})$ be a $PD_3$-pair with aspherical boundary
and such that $w_1(P)$ does not split.
Then $(P,\partial{P})$ is stably equivalent to the boundary connected 
sum of $PD_3$-pairs with $\pi_1$-injective peripheral systems and 
copies of $(D^2\times{S^1},T)$ or $(D^2\tilde\times{S^1}, Kb)$.
\qed
\end{cor}

We may extend the Decomposition Theorems in a similar way,
after allowing stabilization.

An aspherical $PD_3$-pair is well-built if it can be obtained  by
boundary connected sums and adding 1-handles from a set of aspherical $PD_3$-pairs with $\pi_1$-injective aspherical boundaries and copies of
$(D^2\times{S^1},T)$ or $(D^2\tilde\times{S^1},Kb)$.
Every such pair is aspherical.
If $(\pi,\{\kappa_j\})$ is the peripheral system of such a
$PD_3$-pair then $\Gamma(\pi,\{\kappa_j\})$ is connected and 
$\pi\cong\pi\mathcal{G}$, 
where $\mathcal{G}$ is the graph of groups with underling graph 
$\Gamma(\pi,\{\kappa_j\})$, 
vertex groups $G_i$ and $B_j$ for $i\in{I}$ and $j\in{J}$, 
and an edge group $B_{ik}$ whenever $B_{ik}$ is a factor of $B_j$.

\begin{cor}
\label{sap+vf}
Let $(P,\partial{P})$ be a $PD_3$-pair with aspherical boundary 
and peripheral system $(\pi,\{\kappa_j\})$.
Then $(P,\partial{P})\simeq(Q,\partial{Q})\sharp(R,\partial{R})$,
where $(Q,\partial{Q})$ is a well-built aspherical pair with no
summand having free fundamental group
and $\pi_1(R)\cong{F(r)}$,  if and only if $\pi$ is torsion free,
$\Gamma(\pi,\{\kappa_j\})$ is connected and $\pi$ has a free factor of rank 
$r+\beta_1(\Gamma(\pi,\{\kappa_j\}))$.
\end{cor}

\begin{proof}
The first two conditions are clearly necessary, 
and the third follows from Lemma \ref{min gen}.
The converse follows from Theorem \ref{asph boundary}
and the Classification Theorem for pairs with aspherical boundary \cite{Bl}.
\end{proof}

It follows from Lemma \ref{null boundary}  that the pair constructed 
here is aspherical if and only if there is no such summand $R$ with $\pi_1(R)\not=1$.

Here are some more specialized corollaries. 

\begin{cor}
\label{btfree}
If $G$ is a free group then $[\{\kappa_j\},w,\mu]$ is the fundamental triple 
of a $PD_3$-pair if and only if 
$w\circ\kappa_j=w_1(S_j)$ for $j\in{J}$ and condition (3) holds.
\end{cor}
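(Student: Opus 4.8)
The plan is to deduce this from Theorem~\ref{asph boundary} by specialization. The forward implication is immediate: if $[(G,\{\kappa_j\}),w,\mu]$ is a fundamental triple then, since a free group is torsion free, $w$ does not split, so Theorem~\ref{asph boundary} applies and conditions (1)--(4) all hold; in particular $w\circ\kappa_j=w_1(S_j)$ together with conditions (3) and (4) hold.

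For the converse I would show that, when $G$ is free (necessarily finitely generated, hence finitely presentable, since $\pi_1$ of a $PD_3$-pair is $FP_2$), the remaining hypotheses of the converse half of Theorem~\ref{asph boundary} come for free. First, $w$ does not split because $G$ has no element of order $2$. Second, condition~(1) is vacuous, since every indecomposable free factor of $G$ is infinite cyclic and hence virtually free. Third, in condition~(2) the requirement that each $\kappa_j$ be torsion free geometric reduces to geometricity alone, because $\mathrm{Im}(\kappa_j)$ is a subgroup of the free group $G$ and so is torsion free; and geometricity of the $\kappa_j$ is implicit in the hypothesis, the statement of condition~(3) presupposing it through the invariants $r(\kappa_j)$.

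It remains to produce the free factor of rank $r+s$ required by Theorem~\ref{asph boundary}, where $r=\Sigma_{j\in J}r(\kappa_j)$ and $s=\beta_1(\Gamma(G,\{\kappa_j\}))$. Here the key observation is that a Grushko decomposition of $G$ has no one-ended factors, so the index set $I$ in the definition of $\Gamma(G,\{\kappa_j\})$ is empty (equivalently, $\mathrm{Im}(\kappa_j)$ is free and so has no indecomposable $PD_2$-factor), whence $\Gamma(G,\{\kappa_j\})$ consists of the $|J|$ isolated vertices indexed by $J$ and $s=0$. Condition~(3) then asserts exactly that the images of the $\kappa_j(S_j)$ generate a free factor of $G$ of rank $r=r+s$, which is the hypothesis needed. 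The converse half of Theorem~\ref{asph boundary} now realizes $[(G,\{\kappa_j\}),w,\mu]$ by a $PD_3$-pair.

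Since every step is a routine specialization, no genuine obstacle arises; the one point deserving care is the identity $s=0$, that is, the collapse of the bipartite graph $\Gamma(G,\{\kappa_j\})$ to a discrete set when $G$ is free, which is what lets the free-factor condition of Theorem~\ref{asph boundary} reduce to condition~(3) alone.
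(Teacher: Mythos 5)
Your proposal is correct and is exactly the specialization the paper intends: the corollary carries a \qed because it is meant to follow immediately from Theorem~\ref{asph boundary}, and your verification that $w$ cannot split, that conditions (1) and the rest of (2) are automatic for a free group, and that $\Gamma(G,\{\kappa_j\})$ has no edges so $s=0$ and condition (3) supplies the required free factor of rank $r+s=r$, is precisely the intended chain of observations.
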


\begin{proof}
Since $G$ is free $H_i(G;\mathbb{Z}^w)=0$ for $i>1$,
and so 
$H_3(G;\{\kappa_j\};\mathbb{Z}^w\cong\oplus{H_2(S_j;\mathbb{Z}^w)}$.
Thus we may choose $\mu$ to satisfy the boundary compatibility condition.
The augmentation ideal $I(G)$ is free as a $\mathbb{Z}[G]$-module,
and so all homomorphisms into $I(G)$ are projectively homotopy equivalent.
Hence $\mu$ satisfies the Turaev conditions for trivial reasons.
\end{proof}

\begin{cor}
If $\chi(S_j)=0$ for all $j\in{J}$ then $[\{\kappa_j\},w,\mu]$ 
is the fundamental triple of a peripherally torsion free $PD_3$-pair 
if and only if conditions (1)--(4) hold.
\end{cor}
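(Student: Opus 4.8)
The plan is to obtain this as a corollary of Theorem~\ref{asph boundary}; the only substantive new point is that the hypothesis $\chi(S_j)=0$ for all $j$ forces the correction term $s=\beta_1(\Gamma(G,\{\kappa_j\}))$ appearing there to vanish, so that the ``free factor of rank $r+s$'' condition collapses to condition~(3).

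For the necessity of (1)--(4) I would argue that every peripherally torsion free $PD_3$-pair $(P,\partial P)$ realising the triple already satisfies them, without using $\chi(S_j)=0$ at all. Conditions (2) and (4) are immediate: each $\kappa_j$ is geometric by the Algebraic Loop Theorem, has torsion free image by peripheral torsion-freeness, and satisfies $w\circ\kappa_j=w_1(S_j)$ because every boundary component of a $PD_3$-pair is a $PD_2$-complex with orientation character the restriction of $w$; and the Turaev and boundary compatibility conditions are necessary for any $PD_3$-pair, as recalled in \S1. Condition~(3) follows from Lemmas~\ref{restriction} and~\ref{free factor} after adding $1$-handles to connect up the components of $\partial P$, exactly as in the proof of Theorem~\ref{asph boundary}. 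For condition~(1) I would invoke Lemma~\ref{indec nonor} when $w$ does not split; when $w$ splits one reduces to the double $DP$ and appeals to \cite[Theorem 7.10]{Hi} as in the remark following that lemma, which must be checked to give the same conclusion, namely that every indecomposable factor of $\pi_1(P)$ is a $PD_3$-group, has one end and $c.d.=2$, or is virtually free.

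For the converse I would suppose (1)--(4) hold and first pin down the peripheral subgroups. By~(2) each $\kappa_j$ is torsion free geometric, and since $\chi(S_j)=0$ the surface $Y_j$ is a torus or a Klein bottle; any two disjoint two-sided essential simple closed curves on such a surface are isotopic, while the lone separating two-sided essential curve on a Klein bottle bounds a M\"obius band and so is barred from a geometric basis. Hence a geometric basis $\Phi_j$ for $\mathrm{Ker}(\kappa_j)$ has $|\Phi_j|\le1$ and $B_j=\mathrm{Im}(\kappa_j)$ is $\mathbb{Z}^2$, a Klein bottle group, or infinite cyclic. In particular $B_j$ has at most one indecomposable factor which is a $PD_2$-group, so in the bipartite graph $\Gamma(G,\{\kappa_j\})$ every vertex in $J$ has degree at most $1$; as all edges join $J$ to $I$, the graph is a forest and $s=\beta_1(\Gamma(G,\{\kappa_j\}))=0$. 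Condition~(3) then says that $G$ has a free factor of rank $r=r+s$, so the converse half of Theorem~\ref{asph boundary} produces a $PD_3$-pair realising the triple, whose boundary components are tori and Klein bottles and which is therefore peripherally torsion free. When $w$ splits one cannot cite Theorem~\ref{asph boundary} verbatim, but the construction in its proof is unaffected: the non-splitting of $w$ is used there only to realise the virtually free summand by an \emph{orientable} $PD_3$-complex, and Turaev's Realization Theorem supplies a (possibly non-orientable) one in general.

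The step I expect to cause the most trouble is the bookkeeping around the gap between ``peripherally torsion free'' here and ``$w$ does not split'' in Theorem~\ref{asph boundary}: I must check that the two places where the non-splitting of $w$ enters that theorem --- Lemma~\ref{indec nonor} in the necessity half, and the realization of the virtually free summand in the sufficiency half --- both go through under the weaker hypothesis, and in particular that condition~(1) really does follow from the remark after Lemma~\ref{indec nonor} in the split case. The remaining work --- identifying $B_j$ when $\chi(S_j)=0$ and deducing $s=0$ --- is routine.
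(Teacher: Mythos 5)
Your proposal is correct and follows essentially the same route as the paper: the paper's entire proof is the observation that a torsion free geometric basis on a torus or Klein bottle contains no separating curves (the only separating essential curves on a Klein bottle bound M\"obius bands), whence each $\mathrm{Im}(\kappa_j)$ has at most one $PD_2$-factor, $\Gamma(G,\{\kappa_j\})$ is a forest, $s=\beta_1(\Gamma(G,\{\kappa_j\}))=0$, and Theorem~\ref{asph boundary} applies verbatim. The additional bookkeeping you supply (re-deriving necessity, and the discussion of the case where $w$ splits) is not needed, since the corollary is read under the standing hypotheses of Theorem~\ref{asph boundary}.
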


\begin{proof}
There are no separating essential simple closed curves on a torus,
and the only such curves on the Klein bottle bound M\"obius bands.
Thus $\beta_1(\Gamma(G,\{\kappa_j\}))=0$.
\end{proof}

\begin{cor}
If $w$ is an orientation character that does not split and
$\beta_1(\Gamma(G,\{\kappa_j\}))=0$ 
 then $[(G,\{\kappa_j\}),w,\mu]$ 
is the fundamental triple of a $PD_3$-pair 
if and only if conditions (1)--(4) hold.
\qed
\end{cor}

A handlebody is a boundary connected sum of copies of 
$D^2\times{S^1}$ and $D^2\tilde\times{S^1}$. 
Thus it is a 3-manifold with connected boundary and which has a handle decomposition with only 0- and 1-handles.

\begin{cor}
Let $(P,\partial{P})$ be  a $PD_3$-pair with $\pi_1(P)$ virtually free
and aspherical boundary.
If $w$ does not split then $(P,\partial{P})$ is the interior connected sum 
of a $PD_3$-complex and handlebodies.
\qed
\end{cor}

\section{independence of the conditions}

Theorem \ref{asph boundary} has three principal hypotheses: 
(a) boundary compatibility,  (b) the Turaev condition and (c) the ALT
condition, imposed by the Algebraic Loop Theorem.
It is natural to ask whether any of these are implied by the others.
The following simple examples (with trivial orientation character)
suggest that they are independent, 
in at least two of the three cases.

1) Let $G=\mathbb{Z}$ and $\kappa:T=\mathbb{Z}^2\to{G}$ be the trivial homomorphism.
The connecting homomorphism from $H_3(G,\kappa;\mathbb{Z})$ 
to $H_2(T;\mathbb{Z})$ is an isomorphism, 
and so the boundary compatibility condition holds for 
either generator of $H_3(G,\kappa;\mathbb{Z})$,
while the Turaev condition holds vacuously (see Corollary \ref{btfree}).
However this pair does not satisfy the ALT condition,
since any two disjoint simple closed curves on the torus are parallel.
Thus (a) and (b) together do not imply (c).

2) Let $H$ be the Higman superperfect group,  with presentation
\[
\langle{a,b,c,d}\mid{a^2=bab^{-1}},~b^2=cbc^{-1},~c^2=dcd^{-1},~d^2=ada^{-1}\rangle
\]
and let $G=H\times\mathbb{Z}$.
Let $\kappa$ be the inclusion of $T$ as the subgroup generated by 
$a$ and the second factor.
Then $H_i(G;\mathbb{Z})=0$ for $i>1$, since 
$c.d.H=2$ and $H_i(H;\mathbb{Z})=0$ for $i>0$.
Hence the connecting homomorphism from 
$H_3(G,\kappa;\mathbb{Z})$ to $H_2(T;\mathbb{Z})$ is an isomorphism, 
and so the boundary compatibility condition holds.
The ALT condition also holds, since $\kappa$ is injective.
If the Turaev condition also held then $(G,\kappa)$ would be 
the peripheral system of a $PD_3$-pair, 
by Bleile's Realization Theorem.
However, this is not possible \cite[Lemma 3.1]{Hi}.
Thus (a) and (c) together do not imply (b).

Another such example is given by the pair $(G,\kappa)$, 
where $G=\pi{K}$ is the group of a satellite knot $K$ and $\kappa$ 
is the inclusion of $T$ corresponding to an essential,  
non-boundary parallel torus.
Then $(G,\kappa)$ satisfies the boundary and ALT conditions,
but the Bieri-Eckmann Splitting Theorem \cite[Theorem 8.4]{BE}
may be applied to the double $D(G,T)=G*_TG$ to show that 
$(G,\kappa)$ is not the peripheral system of a $PD_3$-pair.

3) Whether the boundary condition is indispensable is less clear.
Let $G$ be a finitely presentable group with $c.d.G=2$.
Then $I(G)$ is not projective,  and so no stable isomorphism from a 
$\mathbb{Z}[G]$-module $M$ to $I(G)$ is projectively null-homotopic.
Hence if there is a $PD_2$-group $J$ and a monomorphism
$\kappa:J\to{G}$ such that $(G,\kappa)$ satisfies the Turaev condition for some $\mu$ then $H_3(G,\kappa;\mathbb{Z})\not=0$.
The condition $c.d.G=2$ also implies that $H_2(G;\mathbb{Z})$
is torsion free and $H_3(G;\mathbb{Z})=0$.
Hence in this case $H_3(G,\kappa;\mathbb{Z})\cong{H_2(J;\mathbb{Z})}$,
and so the boundary condition is implied by the other conditions.
To go further we need a better understanding of when the Turaev condition holds.

\section{pairs with $\pi$ indecomposable and virtually free}

The condition that $\pi$ is indecomposable as a free product is more restrictive
than the pair being indecomposable as a connected sum,
although these notions are equivalent in the absolute case \cite{Tu}.
The pair obtained from $RP^2\times([0,1],\{0,1\})$ 
by adding a 1-handle to connect the two boundary components
is perhaps the simplest counter-example.

\begin{lemma}
\label{vfree wnont}
Let $(P,\partial{P})$ be a $PD_3$-pair, 
and let $\pi=\pi_1(P)$ and $w=w_1(P)$.
If $\pi$ is virtually free and $V$ is an indecomposable factor of
$\pi$ such that $w|_V$ is nontrivial then 
$V\cong\mathbb{Z}\oplus\mathbb{Z}/2\mathbb{Z}$,
$\mathbb{Z}$ or $\mathbb{Z}/2\mathbb{Z}$.
\end{lemma}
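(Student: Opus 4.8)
The statement concerns an indecomposable free factor $V$ of a virtually free $\pi=\pi_1(P)$ on which the orientation character $w$ is nontrivial. The plan is to reduce the analysis of $V$ to that of a boundary $PD_2$-group or a finite group, using the structural facts already assembled in the paper. First I would recall from Lemma \ref{indec nonor} and the discussion following it that when $w$ splits, $\pi$ is $vFP$ with $v.c.d.\pi\le 3$ and that $\pi^+=\mathrm{Ker}(w)$ is torsion free; so $\pi^+$ is virtually free and torsion free, hence free, and $V\cap\pi^+$ is a free group. Since $w|_V$ is nontrivial, $V$ has an index-$2$ subgroup $V^+=V\cap\pi^+$ which is free; thus $V$ is a virtually free group containing a free subgroup of index $2$, and being indecomposable as a free product it is accessible with a single vertex, i.e. $V$ is either finite (hence $\mathbb{Z}/2\mathbb{Z}$, the only indecomposable finite group admitting a nontrivial $w$ with torsion-free orientable cover being forced by $w$ splitting), or $V$ has two ends, or $V$ is free of rank $1$. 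If $V$ is free it must be $\mathbb{Z}$ (rank $\ge 2$ is decomposable); if $V$ has two ends it is $\mathbb{Z}$, $\mathbb{Z}\rtimes\mathbb{Z}$, $\mathbb{Z}/2\mathbb{Z}*\mathbb{Z}/2\mathbb{Z}$, or $\mathbb{Z}\times\mathbb{Z}/2\mathbb{Z}$, of which the first and last two are free-product-decomposable except $\mathbb{Z}$ and $\mathbb{Z}\times\mathbb{Z}/2\mathbb{Z}$ — so among two-ended groups only $\mathbb{Z}$ and $\mathbb{Z}\oplus\mathbb{Z}/2\mathbb{Z}$ survive indecomposability.

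Next I would pin down which of these cases actually occur and eliminate the spurious possibility $\mathbb{Z}$ with $w$ trivial (excluded by hypothesis) and check that $w|_V$ nontrivial is consistent with each surviving case: on $\mathbb{Z}/2\mathbb{Z}$ the nontrivial character; on $\mathbb{Z}$ the sign character $n\mapsto(-1)^n$; on $\mathbb{Z}\oplus\mathbb{Z}/2\mathbb{Z}$ the character killing the torsion and sending the generator of $\mathbb{Z}$ to $-1$, or the one detecting the torsion. All of these have torsion-free (in fact free, namely trivial or $\mathbb{Z}$) orientable subgroup of index $2$, consistent with $\pi^+$ being torsion free. The indecomposable virtually free groups with an orientable index-$2$ subgroup that is \emph{not} on the list — e.g. $\mathbb{Z}/2\mathbb{Z}*\mathbb{Z}/2\mathbb{Z}$ — are decomposable as free products (or their orientation-orientable cover is not torsion free in the relevant way), so they are excluded by indecomposability.

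The main obstacle, and the step that needs genuine care, is ruling out two-ended groups other than $\mathbb{Z}$ and $\mathbb{Z}\oplus\mathbb{Z}/2\mathbb{Z}$, and more generally confirming that an infinite, one-vertex, virtually free $V$ with a free index-$2$ subgroup and nontrivial $w|_V$ must have \emph{two} ends rather than infinitely many. Here I would argue that if $V$ had infinitely many ends it would split nontrivially as a free product (by Stallings' theorem, since $V$ is finitely generated and torsion in $V$ can be absorbed into vertex groups), contradicting indecomposability — unless the splitting is an HNN or amalgam forced by torsion, but a one-ended-vertex-free decomposition of a torsion-by-free group with finite edge groups that is free-product-indecomposable must have a single vertex group, which is then finite or two-ended. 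This is essentially the accessibility/Stallings structure theory for virtually free groups (Karrass–Pietrowski–Solitar), and the delicate point is that "$w|_V$ nontrivial" forces $|V|>1$ or $V$ infinite with a specific $2$-torsion-at-most structure, which combined with $\pi^+$ torsion free (so $V^+$ free) leaves exactly the three groups in the statement. I would close by tabulating the three possibilities and noting each is realized, e.g. by the peripheral/ambient data of $RP^2\times S^1$-type pairs and $(D^2\tilde\times S^1,Kb)$.
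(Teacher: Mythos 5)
Your group-theoretic skeleton (an indecomposable virtually free group with a \emph{free} index-two orientable subgroup must be $\mathbb{Z}$, $\mathbb{Z}/2\mathbb{Z}$ or $\mathbb{Z}\oplus\mathbb{Z}/2\mathbb{Z}$) is sound, but the entire argument hangs on the claim that $V^+=V\cap\mathrm{Ker}(w)$ is torsion free, and that is exactly where Poincar\'e duality has to enter --- and where your proposal has a genuine gap. The paper's remark that $\pi^+$ is torsion free is made only under the hypothesis that $w$ splits, and is itself obtained from the reduction to the absolute case (doubling) plus the classification of non-orientable $PD_3$-complexes, not from anything more elementary; you cannot invoke it as a free-standing fact. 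When $w$ does not split, nothing you have written excludes, say, $V\cong\mathbb{Z}/4\mathbb{Z}$, $V\cong\mathbb{Z}/4\mathbb{Z}\times\mathbb{Z}$, or a generalized quaternion factor with $w|_V$ nontrivial: these are indecomposable, virtually free, admit nontrivial characters to $\mathbb{Z}^\times$, and do \emph{not} split $w$, because their unique involution is a square and hence orientation-preserving. Your parenthetical claim that a nontrivial $w$ on a finite indecomposable $V$ forces $w$ to split is false for precisely these groups. Relatedly, your enumeration of two-ended groups omits $F\times\mathbb{Z}$ for finite $F$ of order greater than $2$ (and lists the one-ended Klein bottle group $\mathbb{Z}\rtimes\mathbb{Z}$ as two-ended); the omitted groups are exactly the ones that only $PD_3$ input can eliminate.

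The paper closes this gap by actually using the pair structure: after making the boundary $\pi_1$-injective \cite{Cr07} and discarding $S^2$ components, virtual freeness of $\pi$ forces $\partial{P}$ to be a union of copies of $RP^2$, so the double $DP$ is a $PD_3$-complex with virtually free fundamental group; its non-orientable indecomposable summands are only $S^2\tilde\times{S^1}$ and $RP^2\times{S^1}$ \cite[Theorems 7.1 and 7.4]{Hi}, and since $V$ is a retract of $\pi_1(DP)$ with $w|_V$ nontrivial, the Kurosh theorem places $V$ inside $\mathbb{Z}$ or $\mathbb{Z}\oplus\mathbb{Z}/2\mathbb{Z}$. To salvage your route you would still need this (or an equivalent) classification input to kill the torsion in $V^+$; accessibility and Stallings' theorem alone cannot do it, since $\mathbb{Z}/4\mathbb{Z}$ with its nontrivial character satisfies every purely group-theoretic hypothesis you impose.
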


\begin{proof}
We may assume that the boundary is $\pi_1$-injective \cite[Theorem 2]{Cr07}
and that $\partial{P}$ has no $S^2$ boundary components.
Then $\partial{P}$ is a union of copies of $RP^2$,
and so $\pi_1(DP)$ is virtually free.
The indecomposable summands of $DP$ are either orientable or copies of
$S^2\tilde\times{S^1}$ or $RP^2\times{S^1}$ \cite[Theorems 7.1 and 7.4]{Hi}.
The lemma follows, since $V$ is a retract of $\pi_1(DP)$ and $w|_V$ is nontrivial.
\end{proof}

\begin{theorem}
\label{vf wsplit}
Let $(P,\partial{P})$ be a $PD_3$-pair with no $S^2$ boundary components
and such that $\pi=\pi_1(P)$ is indecomposable and virtually free. Then
\begin{enumerate}
\item{}If $w=w_1(P)$ splits and $\pi$ is infinite then $\partial{P}=\emptyset$ 
and $P\simeq{RP^2}\times{S^1}$;
\item{}If $w$ does not split then either $\partial{P}=\emptyset$ or
$(P,\partial{P})\simeq(D^2\times{S^1},T)$ 
or $(D^2\tilde\times{S^1},Kb)$.  
\end{enumerate}
\end{theorem}

\begin{proof}
Suppose first that $w$ splits and $\pi$ is infinite.
Then $\pi\cong\mathbb{Z}\oplus\mathbb{Z}/2\mathbb{Z}$,
by Lemma \ref{vfree wnont}.

The element of order 2 in $\pi$ is orientation-reversing,
since it has infinite centralizer  \cite[Theorem 17]{Cr00},
and so $\pi^+=\pi_1(P^+)\cong\mathbb{Z}$.
A simple argument applying Schanuel's Lemma to the cellular chain complex 
$C_*(P^+;\mathbb{Z}[\mathbb{Z}])$ shows that $\pi_2(P^+)$ is stably free of 
rank $\chi(P^+)$ as a $\mathbb{Z}[\mathbb{Z}]$-module.
Moreover,  stably free $\mathbb{Z}[\mathbb{Z}]$-modules are in fact free.

On considering the exact sequence of homology for $(P,\partial{P})$ with coefficients 
$\mathbb{F}_2$, we see that $\beta_1(\partial{P})\leq4$.
The covering pair associated to the subgroup 
$n\mathbb{Z}\oplus\mathbb{Z}/2\mathbb{Z}$ satisfies the same bounds, 
for any $n\geq1$.
Since any $RP^2$ in $\partial{P}$ would have $n$ preimages in such a cover,
$\partial{P}$ can have no $RP^2$ boundary components.
Since $\chi(\partial{P})=2\chi(P)$, it follows that $\chi(P)\leq0$.
Hence $\chi(P^+)\leq0$.
Suppose that $\partial{P}$ is not empty.
Then $\partial{P}^+\not=\emptyset$, so $H_3(P^+;\mathbb{Z})=0$.
Hence $\chi(P^+)=0$, and so $\pi_2(P^+)=0$, since it is a free
$\mathbb{Z}[\pi^+]$-module of rank 0.
Therefore $P^+$ is aspherical.
Since $p$ is finite-dimensional and $\pi$ has torsion, this is a contradiction.
Therefore $\partial{P}$ is empty.
Hence $P\simeq{RP^2}\times{S^1}$ \cite{Wa}.

If $w$ does not split then $P$ has aspherical boundary.
It follows from condition (3) of Theorem \ref{asph boundary}
that $\beta_1(\partial{P})\leq1$, and if $\beta_1(\partial{P})=1$ then
$\pi\cong\mathbb{Z}$. 
Thus either $\partial{P}=\emptyset$ or $\pi\cong\mathbb{Z}$.
\end{proof}

We shall show that $RP^2\times([0,1],\{0,1\})$ is the only indecomposable,
non-orientable pair with no $S^2$ boundary components and finite
fundamental group in Theorem \ref{nonor finite} below.

\section{spherical boundary components}

In this section we shall show that $PD_3$-pairs with boundary having some
$S^2$ components but no $RP^2$ components may be classified  in terms of 
slightly different invariants.
Instead of using the image  of the fundamental class in the group homology,
we use the first $k$-invariant.
In the absolute case, the fundamental triple $[\pi,w,\mu]$ of
a $PD_3$-complex $P$ determines $P$ among other $PD_3$-complexes,
whereas (when $\pi=\pi_1(M)$ is infinite) the triple $[\pi,w,k_1]$ determines 
$P$ among 3-dimensional complexes with $H_3(\widetilde{P};\mathbb{Z})=0$.
There is not yet a useful characterization of the $k$-invariants
which are realized by $PD_3$-complexes (or $PD_3$-pairs)
with infinite fundamental group.

We shall assume throughout this section that $\pi$ is infinite.
(This assumption shall be repeated in the statements of results, for clarity.)
Then $H_3(P;\mathbb{Z}[\pi])=H_3(\widetilde{P};\mathbb{Z})=0$, 
and so the homotopy type of $P$ is determined by $\pi$, 
$\Pi=\pi_2(P)=H_2(P;\mathbb{Z}[\pi])$ and the orbit of the first $k$-invariant 
$k_1(P)\in{H^3(\pi;\Pi)}$
under the actions of $Aut(\pi)$ and $Aut_\pi(\Pi)$.
The homotopy type of the pair involves the peripheral system and the 
inclusions of the spherical components (meaning copies of $S^2$ and/or $RP^2$).
If $\pi$ is torsion free then it is of type $FP$ and $c.d.\pi\leq3$,
as observed in \S2 above.
Hence $\Pi$ is a finitely generated projective $\mathbb{Z}[\pi]$-module,
by a Schanuel's Lemma  argument, 
and $\mathbb{Z}\otimes_{\mathbb{Z}[\pi]}\Pi\cong\mathbb{Z}^{\chi(P)-\chi(\pi)}$.
(By the same argument, if $\pi$ is of type $FF$ then $\Pi$ is stably free.)

Let $\alpha_P:\Pi\to\overline{E(\pi)}$ 
be the composite of the Poincar\'e duality isomorphism 
$H_2(P,\partial{P};\mathbb{Z}[\pi])\cong
\overline{E(\pi)}=\overline{H^1(P;\mathbb{Z}[\pi])}$
with the natural homomorphism from $\Pi=H_2(P;\mathbb{Z}[\pi])$ to 
$H_2(P,\partial{P};\mathbb{Z}[\pi])$.
Let $m(P)$ be the number of $S^2$ components of $\partial{P}$.

\begin{theorem}
\label{S^2 components}
Let $(P,\partial{P})$ and $(Q,\partial{Q})$ be $PD_3$-pairs with 
peripheral systems $\{\kappa_j^P|j\in{J}\}$ and $\{\kappa_j^Q|j\in{J}\}$, 
respectively, and such that $\pi=\pi_1(P)\not=1$.
If $c.d.\pi\leq2$ then $(P,\partial{P})\simeq(Q,\partial{Q})$ if and only if
\begin{enumerate}
\item$m(P)=m(Q)$;
\item{}there are isomorphisms $\theta:\pi_1(P)\cong\pi_1(Q)$
and $\theta_j:S_j^P\to{S_j^Q}$ such that  $w_1(P)=w_1(Q)\circ\theta$ 
and $\theta\kappa_j^P$ is conjugate to $\kappa_j^Q\theta_j$, 
for all $j\in{J}$.
\end{enumerate}
In general, these conditions determine a $\mathbb{Z}[\pi]$-linear 
isomorphism $g:\pi_2(P)\to\theta^*\pi_2(Q)$
such that $\alpha_P=E(\theta)\alpha_Qg$.
Hence if $\partial{P}$ and $\partial{Q}$ have no $RP^2$ boundary components
then $(P,\partial{P})\simeq(Q,\partial{Q})$ if and only if (1) and (2) hold and
$\theta^*k_1(Q)=g_\#k_1(P)$,
up to the actions of $Aut(\pi)$ and $Aut(\pi_2(Q))$.
\end{theorem}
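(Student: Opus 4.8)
The plan is to establish the three assertions in turn: first the ``if and only if'' for homotopy equivalence of pairs under (1) and (2) alone; second the existence and compatibility of the module isomorphism $g$; third the refinement using $k$-invariants when there are no $RP^2$ boundary components. For the first part, the forward direction is immediate: a homotopy equivalence of pairs induces $\theta$ and the $\theta_j$ (the latter because each boundary component is a closed surface, and an $S^2$ component must go to an $S^2$ component, giving $m(P)=m(Q)$). For the converse I would argue that since $\pi$ is infinite, $H_3(\widetilde P;\mathbb Z)=H_3(\widetilde Q;\mathbb Z)=0$ (as noted just before the statement), so each of $P$, $Q$ is determined up to homotopy by $\pi$, $\pi_2$, and the first $k$-invariant, while the pair is determined by this data together with the peripheral system and the inclusions of the spherical components. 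The key point is that when $c.d.\pi\le2$ the $k$-invariant lives in $H^3(\pi;\Pi)$, and I want to show this group is controlled — indeed, since $c.d.\pi\le2$, one expects $H^3(\pi;\Pi)$ to vanish or at least to be rigid under the actions of $Aut(\pi)$ and $Aut_\pi(\Pi)$, so that $\theta^*k_1(Q)=g_\#k_1(P)$ is automatic once $g$ is produced. This is why hypothesis $c.d.\pi\le2$ suffices for the unconditional equivalence in the first part but only a conditional statement is available in general.

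For the second part, the module isomorphism $g\colon\pi_2(P)\to\theta^*\pi_2(Q)$ should come from the exact sequence of \S2,
\[
0\to{H_2(\partial{P};\mathbb{Z}[\pi])}\to\Pi\to\overline{E(\pi)}\to{H_1(\partial{P};\mathbb{Z}[\pi])}\to0,
\]
together with the observation that $H_2(\partial P;\mathbb Z[\pi])$ and $H_1(\partial P;\mathbb Z[\pi])$ depend only on the peripheral system and on $m(P)$ (the $S^2$ components contribute to $H_2(\partial P;\mathbb Z[\pi])$). Since $\theta$ and the $\theta_j$ identify the peripheral systems, and $m(P)=m(Q)$, the corresponding sequences for $P$ and for $\theta^*(\text{that of }Q)$ have identified outer terms; the map $\alpha_P$ is precisely the composite $\Pi\to\overline{E(\pi)}$ appearing there. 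A diagram chase (using that $\overline{E(\pi)}$ is intrinsic to $\pi$, and invoking projectivity of $\Pi$ from \S3 when $\pi$ is torsion free, or stable freeness in the $FF$ case, to lift isomorphisms of the flanking terms) produces $g$ with $\alpha_P=E(\theta)\alpha_Q g$. The main obstacle here is keeping track of the $RP^2$ versus $S^2$ contributions to $H_*(\partial P;\mathbb Z[\pi])$ and verifying that the identification of peripheral systems really does identify these submodules compatibly; this is the technical heart of the argument.

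For the third part, suppose $\partial P$ and $\partial Q$ have no $RP^2$ components. Then the only spherical boundary components are the $m(P)=m(Q)$ copies of $S^2$, whose inclusions carry no $\pi_1$-information, so the homotopy type of the pair is determined by $(\pi,\{\kappa_j\},w,\Pi,k_1)$ exactly as in the absolute case modified by the peripheral data. Given (1), (2), the map $g$, and $\theta^*k_1(Q)=g_\#k_1(P)$ up to the actions of $Aut(\pi)$ and $Aut(\pi_2(Q))$, one builds a homotopy equivalence of the Postnikov-type models realizing $\theta$ on $\pi_1$, $g$ on $\pi_2$, and respecting the $k$-invariants; the compatibility $\alpha_P=E(\theta)\alpha_Q g$ ensures this equivalence carries the fundamental class of $(P,\partial P)$ to that of $(Q,\partial Q)$ and restricts correctly over the boundary (using the boundary compatibility condition of \S1 and degree-$1$ identification of the surface components via the $\theta_j$). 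Conversely a homotopy equivalence of pairs clearly induces such $\theta$, $\theta_j$, $g$ and matches $k$-invariants. I expect the delicate point to be the passage from ``$\pi_2$ and $k_1$ agree'' to an actual map of pairs: one must check that the mapping-cylinder model $K$ built from the peripheral data, augmented by $2$-cells realizing the remaining free factors as in the proof of Theorem~\ref{turaev condition}, has its boundary inclusion controlled well enough that the cellular map constructed degree by degree can be taken to respect $\partial$, and that the ambiguity by $Aut(\pi_2(Q))$ is exactly what is needed to absorb the choice of $g$.
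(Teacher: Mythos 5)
Your outline matches the paper's architecture: necessity is immediate, the map $P\to Q$ is built through the Postnikov $2$-stage using $\theta$, $g$ and the $k$-invariants (a homotopy equivalence because $H_q(\widetilde P;\mathbb{Z})=0$ for $q>2$ when $\pi$ is infinite), the whole problem reduces to choosing $g$ compatible with the inclusions of the boundary spheres, and $c.d.\pi\leq2$ disposes of the $k$-invariant since $H^3(\pi;\Pi)=0$. But there is a genuine gap exactly where you say the ``technical heart'' lies, and the device you propose for it would fail. You want to produce $g$ by a diagram chase on the four-term sequence, ``invoking projectivity of $\Pi$ \dots to lift isomorphisms of the flanking terms.'' Projectivity of $\Pi$ does not let you do this: the object you must control is the extension $0\to M\to\Pi\to A\to0$, where $M=\mathrm{Ker}(\alpha_P)\cong\mathbb{Z}[\pi]^{m(P)}$ carries the \emph{basis} given by the $S^2$ components, and the issue is whether an isomorphism $\Pi\to\theta^*\pi_2(Q)$ can be chosen to carry one distinguished basis to the other. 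When $\pi$ is free of positive rank, $A$ is stably isomorphic to $\overline{E(\pi)}$, which is \emph{not} projective, so the extension does not split even though $\Pi$ itself is free; no lifting argument from projectivity is available. The paper instead computes $Ext^1_{\mathbb{Z}[\pi]}(A,M)\cong\mathbb{Z}^{m}$ and identifies the extension class as the diagonal element $(1,\dots,1)$ up to sign, using the boundary compatibility fact that $[\partial P]$ maps to $0$ in $H_2(P;\mathbb{Z}^w)$; matching extension classes for $P$ and $Q$ is what yields a $g$ respecting the sphere bases. This step is absent from your proposal.

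In the complementary case ($\pi$ torsion free and not free) the correct argument is also more specific than a diagram chase: one needs that $\overline{E(\pi)}$ is finitely generated free (Lemma \ref{end module}), that $H_1(\partial P;\mathbb{Z}[\pi])$ has a short free resolution (Lemma \ref{short free}), hence by Schanuel that $A$ is projective, so $\Pi\cong M\oplus A$ and the automorphisms of $\Pi$ preserving the projection to $A$ act transitively on the bases of $M$. Finally, your proposal does not address how to handle $\pi$ with torsion (e.g.\ virtually free), which the paper treats by restricting scalars to a torsion-free subgroup of finite index to compute the relevant $Ext$ groups. Without these three ingredients the central claim --- that (1) and (2) determine $g$ with $\alpha_P=E(\theta)\alpha_Q g$ respecting the spherical boundary data --- is not established.
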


\begin{proof}
The conditions are necessary,
for if $F:(P,\partial{P})\to(Q,\partial{Q})$ is a homotopy equivalence of pairs 
then we may take $\theta=\pi_1(F)$, $\theta_j=\pi_1(F|_{Y_j})$ and $g=\pi_2(F)$.

Suppose that they hold. 
The Postnikov 2-stage $P_2(Q)$ may be constructed by adjoining cells 
of dimension $\geq4$ to $Q$, and isomorphisms $\theta$ and $g$ such that
$\theta^*k_1(Q)=g_\#k_1(P)$ determine a map from $P$ to $P_2(Q)$.
Since $P$ has dimension $\leq3$, 
we may assume that such a map factors through $Q$, 
and so we get a map $F:P\to{Q}$ such that $\pi_1(F)=\theta$ and $\pi_2(F)=g$.
Since $\pi$ is infinite,
$H_q(\widetilde{P};\mathbb{Z})=H_q(\widetilde{Q};\mathbb{Z})=0$ for all $q>2$.
Therefore $F$ is a homotopy equivalence, by the Hurewicz and Whitehead theorems.
We shall show that we may choose $g$ to be compatible with the inclusions 
of the boundary components.
It respects the aspherical boundary components, by hypothesis.
If this is also the case for the spherical components then 
we may assume that $F$ maps $\partial{P}$ into $\partial{Q}$, 
and so is a homotopy equivalence of pairs.

Let $A=\mathrm{Im}(\alpha_P)$ and 
$M=\mathrm{Ker}(\alpha_P)=H_2(\partial{P};\mathbb{Z}[\pi])$. 
Since $(P,\partial{P})$ is peripherally torsion free,
$\partial{P}$ has no $RP^2$ boundary components,
and so $M\cong\mathbb{Z}[\pi]^{m(P)}$,
with basis determined by the $S^2$ boundary components.

Suppose first that $\pi$ is torsion free.
Let $Y_j$ be an aspherical component of $\partial{P}$,
and let $B_j=\mathrm{Im}(\kappa_j)$.
Since $H_1(Y_j;\mathbb{Z}[B_j])$ has a short free resolution
as a left $\mathbb{Z}[B_j]$-module, by Lemma \ref{short free},
$H_1(Y_j;\mathbb{Z}[\pi])$ also has such a resolution
as a left $\mathbb{Z}[\pi]$-module.
Summing over all such components of $\partial{P}$,
we get a short exact sequence
\[
0\to\mathbb{Z}[\pi]^p\to\mathbb{Z}[\pi]^q\to{H_1(\partial{P};\mathbb{Z}[\pi])}\to0.
\]
Since $A$ is the kernel of the epimorphism from $\overline{E(\pi)}$ to 
$H_1(\partial{P};\mathbb{Z}[\pi])$, we have
$A\oplus\mathbb{Z}[\pi]^q\cong\overline{E(\pi)}\oplus\mathbb{Z}[\pi]^p$,
by Schanuel's Lemma.

If $\pi$ is not free  then $\overline{E(\pi)}$ is a finitely generated 
free module, by Lemma \ref{end module}.
Hence $A$ is projective, and so $\pi_2(P)\cong{M}\oplus{A}$.
We may choose an isomorphism $g:\pi_2(P)\to\pi_2(Q)$ which respects 
the inclusions of the boundary spheres,
since the automorphisms of $\pi_2(P)$ that preserve the projection to $A$
act transitively on the bases for $\mathrm{Ker}(\alpha_P)$.

Now suppose that $\pi\cong{F(r)}$ is free of rank $r$, for some $r>0$.
The projective $\mathbb{Z}[\pi]$-module $\pi_2(P)$ is then free,
since all projective $\mathbb{Z}[F(r)]$-modules are free.
(In fact it has rank $\chi(P)+r-1$,
and so $P\simeq\vee^rS^1\vee^{\chi(P)+r-1}S^2$,
but we shall not need this.)
In this case $\overline{E(\pi)}$ is not projective; it has a short free
presentation with $r$ generators and one relator.
Since $A$ is projectively stably isomorphic to $\overline{E(\pi)}$,
and thus is not projective,
$\pi_2(\partial{P})$ is not a direct summand of $\pi_2(P)$.
However, 
\[
Ext^1_{\mathbb{Z}[\pi]}(A,M)\cong{Ext^1_{\mathbb{Z}[\pi]}(\overline{E(\pi)},M)}
\cong{M/\mathcal{I}M}\cong\mathbb{Z}^m.
\]
The extension class is (up to sign) the diagonal element $(1,\dots,1)$,
since the image of $[\partial{P}]$ in $H_2(P;\mathbb{Z}^w)$ is 0.
Since the extension classes for $\pi_2(P)$ and $\pi_2(Q)$ correspond,
there is an isomorphism $g:\pi_2(P)\to\pi_2(Q)$ which carries the given basis for 
the image of $\pi_2(\partial{P})$ to the given basis for $\pi_2(\partial{Q})$,
and which induces the isomorphism of the quotients determined by duality 
and the isomorphism of the peripheral data.

We may extend these arguments to all $PD_3$-pairs having no $RP^2$ 
boundary components, 
as follows.
Let $\nu$ be a torsion free subgroup of finite index in $\pi$.
If $L$ is a $\mathbb{Z}[\pi]$-module let $L|_\nu$ be the $\mathbb{Z}[\nu]$-module
obtained by restriction of scalars.
Then there are natural isomorphisms
$Hom_{\mathbb{Z}[\pi]}(L,\mathbb{Z}[\pi])\cong
{Hom_{\mathbb{Z}[\nu]}(L|_\nu,\mathbb{Z}[\nu])}$,
for all $\mathbb{Z}[\pi]$-modules $L$.
Since restriction preserves exact sequences and carries projectives to projectives,
it follows that
$Ext^i_{\mathbb{Z}[\pi]}(L,\mathbb{Z}[\pi])\cong
{Ext^i_{\mathbb{Z}[\nu]}(L|_\nu,\mathbb{Z}[\nu])}$,
for all such modules $L$ and for all $i\geq0$.
Hence if $v.c.d.\pi=2$ or 3 then $Ext^1_{\mathbb{Z}[\pi]}(A,\mathbb{Z}[\pi])=0$,
while if $\pi$ is virtually free of rank $\geq1$ then 
$Ext^1_{\mathbb{Z}[\pi]}(A,\mathbb{Z}[\pi])\cong\mathbb{Z}$, 
and we may argue as before.

In each case, the homotopy type of $P$ is determined by $\pi$, 
$m(P)$ and $k_1(P)$,
while the homotopy type of the pair $(P,\partial{P})$ is determined by
the peripheral system, $m(P)$ and $k_1(P)$.
Finally, if $c.d.\pi\leq2$ then $k_1(P)=k_1(Q)=0$.
\end{proof}

When there is only one boundary $S^2$ 
then $\mathrm{Ker}(\alpha_P)$ is cyclic,
and the generator is well-defined up to multiplication by a unit.
If $\pi$ is free such units lie in $\pm\pi$,
corresponding to choices of orientation and path to a base-point.
There is then no difficulty in finding $g$. 
(This follows also from the ``uniqueness of top cells" argument of 
\cite[Corollary 2.4.1]{Wa}.)

If $\partial{P}\not=\emptyset$ and $Q$ is aspherical then we may argue instead 
that $c.d.\pi\leq2$ and that $\theta$ may be realized by a map $f:P\to{Q}$.
Since $C_*(P;\mathbb{Z}[\pi])$ is chain homotopy equivalent to 
a finite projective complex of length 2 and $c.d.\pi\leq2$,
the $\mathbb{Z}[\pi]$-module $\pi_2(P)$ is finitely generated and projective.
Conditions (1) and (2) imply that $\chi(P)=\chi(Q)$, 
and so $\mathbb{Z}\otimes_\pi\pi_2(P)=0$.
Hence $\pi_2(P)=0$, since $c.d.\pi\leq2$, and so $f$ is a homotopy equivalence.

Let $(D_k,\partial{D_k})=\sharp^k(D^3,S^2)$ be the 3-sphere with $k$ holes.

\begin{cor}
\label{punct}
Let $(P,\partial{P})$ be a $PD_3$-pair with no $RP^2$ boundary components and such that either $\pi=\pi_1(P)$ is infinite and virtually free or $c.d.\pi=2$.
Then $(P,\partial{P})\simeq
(\widehat{P},\partial\widehat{P})\sharp(D_{m(P)},\partial{D_{m(P)}})$.
\end{cor}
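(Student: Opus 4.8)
The plan is to apply Theorem~\ref{S^2 components} to $(P,\partial P)$ and to $X:=(\widehat P,\partial\widehat P)\sharp(D_{m(P)},\partial D_{m(P)})$. First I would note that $X$ is a $PD_3$-pair: capping off $S^2$ boundary components of a $PD_3$-pair yields a $PD_3$-pair (or $PD_3$-complex), and interior connected sum of $PD_3$-pairs is again a $PD_3$-pair \cite{Bl,Wa}; neither operation creates $RP^2$ boundary components or changes the fundamental group, so $\pi_1(X)=\pi$ and $X$ meets the hypotheses of Theorem~\ref{S^2 components}. Conditions (1) and (2) there are immediate: $\widehat P$ has no $S^2$ boundary components and $D_{m(P)}$ has exactly $m(P)$ of them, while interior connected sum creates none, so $m(X)=m(P)$; and the aspherical boundary components of $X$ are exactly the aspherical components $Y_j$ of $\partial P$, with the same inclusion homomorphisms $\kappa_j$, so one takes $\theta=\mathrm{id}_\pi$ and $\theta_j=\mathrm{id}$. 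When $c.d.\pi\leq2$ both first $k$-invariants vanish and we are done, so the remaining work is the case $\pi$ infinite virtually free with torsion, where the $k$-invariants must be matched.

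For this, observe that $\widehat P$ is obtained by capping off $S^2$ components from each of $P$ and $X$. Writing $\widehat P=P\cup\bigsqcup_{i=1}^{m(P)}D^3_i$ with $D^3_i$ glued along the sphere component $Z_i\subseteq\partial P$, the inclusion $j\colon P\hookrightarrow\widehat P$ is a $\pi_1$-isomorphism and $\pi_2(j)\colon\pi_2(P)\to\pi_2(\widehat P)$ is the quotient by the free rank-$m(P)$ submodule $H_2(\partial P;\mathbb{Z}[\pi])\cong\mathbb{Z}[\pi]^{m(P)}$ spanned by the classes $[Z_i]$ (the aspherical components contribute nothing to $H_2(\partial P;\mathbb{Z}[\pi])$, by \cite[Lemma 3.1]{Hi}). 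The same holds for the capping-off inclusion $j'\colon X\hookrightarrow\widehat P$, since capping off the $m(P)$ sphere components of $X$ returns $\widehat P$. By naturality of the first $k$-invariant, $\pi_2(j)_\#k_1(P)=k_1(\widehat P)=\pi_2(j')_\#k_1(X)$ in $H^3(\pi;\pi_2(\widehat P))$. The isomorphism $g\colon\pi_2(P)\to\pi_2(X)$ produced in the proof of Theorem~\ref{S^2 components} is compatible with these quotients — it carries $[Z_i]$ to the class of the $i$-th sphere component of $X$ and induces the duality-compatible identification of the quotients — so $\pi_2(j')\circ g=\pi_2(j)$, and hence $g_\#k_1(P)$ and $k_1(X)$ have the same image under $H^3(\pi;\pi_2(X))\to H^3(\pi;\pi_2(\widehat P))$. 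Since $\ker\pi_2(j')\cong\mathbb{Z}[\pi]^{m(P)}$ is free, the kernel of this map is a quotient of $H^3(\pi;\mathbb{Z}[\pi])^{m(P)}$, and $H^3(\pi;\mathbb{Z}[\pi])\cong H^3(\nu;\mathbb{Z}[\nu])=0$ for a torsion-free (hence free) finite-index subgroup $\nu$, exactly as in the proof of Theorem~\ref{S^2 components}. Therefore $g_\#k_1(P)=k_1(X)$, and Theorem~\ref{S^2 components} gives $(P,\partial P)\simeq X$.

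This is precisely where the hypothesis on $\pi$ enters, through the vanishing of $H^3(\pi;\mathbb{Z}[\pi])$; I expect the main technical point to be the compatibility $\pi_2(j')\circ g=\pi_2(j)$, i.e.\ checking that the isomorphism supplied by Theorem~\ref{S^2 components} respects the boundary-sphere decomposition on the nose. One can bypass this by a direct geometric argument: carrying out the $i$-th of the interior connected sums defining $X$ inside the genuine $3$-ball $D^3_i\subseteq\widehat P$, the operation there is the ordinary manifold connected sum — deleting an open ball $B_i\subset\mathrm{int}\,D^3_i$ and gluing in $S^2\times[0,1]$ — and collapsing the resulting spherical collars $(D^3_i\setminus\mathrm{int}\,B_i)\cup(S^2\times[0,1])$ onto $P$ exhibits a homotopy equivalence of pairs $X\simeq(P,\partial P)$ carrying the $i$-th boundary sphere of $X$ homeomorphically onto $Z_i$. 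This is the analogue for pairs of the ``uniqueness of top cells'' argument of \cite[Corollary 2.4.1]{Wa}, and its only delicate point is verifying that the connected-sum construction of \cite{Bl} may indeed be performed inside such a ball.
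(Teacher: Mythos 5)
Your proposal is correct and follows essentially the same route as the paper: reduce to Theorem~\ref{S^2 components} (with the $c.d.\pi\leq2$ case disposed of by the vanishing of $H^3(\pi;\Pi)$) and, in the virtually free case, transport the comparison of first $k$-invariants through an auxiliary space using $H^i(\pi;\mathbb{Z}[\pi]^n)=0$ for $i\geq2$. The only difference is cosmetic: the paper compares via the punctured space $P_o$ (obtained by deleting a small open disc from a collar of $\partial{P}$), which includes into both $P$ and $P'=\widehat{P}\sharp{D_{m(P)}}$, whereas you compare via the capped space $\widehat{P}$, which receives the two capping-off maps; both versions hinge on the same vanishing and on the same (lightly justified) compatibility of the isomorphism $g$ with these maps.
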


\begin{proof}
We shall compare the $k$-invariants of $P$ and 
$P'=\widehat{P}\sharp{D_m}$ via the space $P_o$ obtained by deleting 
a small open disc from the interior of a collar neighbourhood of $\partial{P}$. 
Clearly $P=P_o\cup{D^3}$, while $P'=P_o\cup{mD^3}\cup{D_{m+1}}$.
Let $\theta:\pi_1(P)\to\pi_1(P')$ be the isomorphism determined by 
the inclusions of $P_o$ into each of $P$ and $P'$, 
and let $g:\pi_2(P)\cong\pi_2(P')$ be the isomorphism constructed 
as in the theorem.

We see from the exact sequence of homology for the pair $(P,P_o)$ 
with coefficients $\mathbb{Z}[\pi]$ that $\pi_2(P_o)=H_2(P_o;\mathbb{Z})$
is an extension of $\pi_2(P)$ by $H_3(D,S^2;\mathbb{Z}[\pi])\cong\mathbb{Z}[\pi]$.
Similarly,
there is an exact sequence 
\[
0\to\mathbb{Z}[\pi]^m\to\pi_2(P_o)\to\pi_2(P')\to\mathbb{Z}[\pi]^{m-1}\to0.
\]
If $\pi$ is virtually free then $H^i(\pi;\mathbb{Z}[\pi]^n)=0$ for all $i\geq2$
and all $n\geq0$, and so the inclusions of $P_o$ into each of $P$ and $P'$
induce isomorphisms $H^3(\pi;\pi_2(P_o))\cong{H^3(\pi;\pi_2(P))}$ and
$H^3(\pi;\pi_2(P_o))\cong{H^3(\pi;\pi_2(P'))}$. 
The $k$-invariants restrict to $k_1(P_o)$, and 
$\theta^*k_1(P')=g_\#k_1(P)$, up to automorphisms.
Hence $(P',\partial{P'})\simeq(P,\partial{P})$, by the theorem.

Essentially the same argument applies if  $c.d.\pi\leq2$, 
but in this it is simpler to observe that 
the $k$-invariants are trivial since $H^3(\pi;\Pi)=0$,
and the inclusion of $P$ into $\widehat{P}$ induces an isomorphism
$\pi\cong\pi_1(\widehat{P}\sharp{D_{m(P)}})$ which respects the 
nontrivial peripheral data.
\end{proof}

With Theorem \ref{asph boundary}  this leads to 
a complement to Theorem \ref{vf wsplit}.

\begin{cor}
\label{vf notsplit cor}
Let $(P,\partial{P})$ be a $PD_3$-pair such that $\pi=\pi_1(P)$
 is virtually free and $w=w_1(P)$ does not split.
Then $(P,\partial{P})$ is a connected sum
${Q\sharp(R,\partial{R}})\sharp(D_m,\partial{D_m})$,
where $Q$ is a $PD_3$-complex and $R$ is a connected sum of copies of
$(D^2\times{S^1},T)$ or $(D^2\tilde\times{S^1}, Kb)$.
\end{cor}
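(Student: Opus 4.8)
The plan is to strip off the spherical boundary components using Corollary~\ref{punct}, and then to recognise the resulting pair with aspherical boundary by passing it through the Realization Theorem~\ref{asph boundary}.

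First I would record the consequences of the hypothesis that $w$ does not split. Then $(P,\partial P)$ is peripherally torsion free and has no $RP^2$ boundary components: an $RP^2$ component would give a homomorphism $\kappa_j:\mathbb{Z}/2\mathbb{Z}\to\pi$ with $w\circ\kappa_j=w_1(RP^2)\ne1$, hence nontrivial, hence injective, whose image is an order-two element carrying $w=-1$, a contradiction. Since $\pi$ is virtually free (and infinite, by the standing assumption of this section), each peripheral subgroup $B_j=\mathrm{Im}(\kappa_j)$ is a finitely generated torsion-free virtually free group, hence free, say $B_j\cong F(r_j)$; in particular no $PD_2$-group is a factor of any $B_j$, and $r_j=r(\kappa_j)=\frac12\beta_1(S_j)$. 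Since $\pi$ is infinite and virtually free and there are no $RP^2$ components, Corollary~\ref{punct} applies and gives $(P,\partial P)\simeq(\widehat P,\partial\widehat P)\sharp(D_{m(P)},\partial D_{m(P)})$, where $(\widehat P,\partial\widehat P)$ is obtained by capping off the $m(P)$ two-sphere components of $\partial P$ with copies of $D^3$; this pair has aspherical boundary, the same fundamental group $\pi$ and the same non-splitting orientation character $w$.

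Next I would apply Theorem~\ref{asph boundary} to $(\widehat P,\partial\widehat P)$. As $\pi$ is virtually free it has no one-ended indecomposable free factor, so in the notation of that theorem $I=\emptyset$ and $\Gamma(\pi,\{\kappa_j\})$ has no edges; hence $s:=\beta_1(\Gamma(\pi,\{\kappa_j\}))=0$. By the necessity half of the theorem the fundamental triple of $(\widehat P,\partial\widehat P)$ satisfies conditions (1)--(4), and condition (3) says the images of the free factors of the $\kappa_j(S_j)$ generate a free factor of $\pi$ of rank $r:=\Sigma_{j\in J}r_j$. Since each $\kappa_j(S_j)=F(r_j)$ is itself the free product of its $r_j$ infinite cyclic free factors, this means $\pi\cong V*F(r+t_0)$ for some virtually free $V$ with no infinite cyclic free factor and some $t_0\ge0$; in particular $\pi$ has a free factor of rank $r=r+s$, which is exactly the hypothesis needed for the converse half of Theorem~\ref{asph boundary}.

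I would then invoke that converse: it produces a $PD_3$-pair $(N,\partial N)$ with the same fundamental triple as $(\widehat P,\partial\widehat P)$, so $(\widehat P,\partial\widehat P)\simeq(N,\partial N)$ by the classification of $PD_3$-pairs with aspherical boundary \cite{Bl}; and the point is to trace the construction of $(N,\partial N)$ in the present situation. With $I=\emptyset$ the $r=0$ stage of the construction realizes $[V,w|_V,\mu_V]$ by a $PD_3$-complex $Q$, the induction on $r$ attaches, one at a time, $r$ boundary summands each a copy of $(D^2\times S^1,T)$ or $(D^2\tilde\times S^1,Kb)$, and the $t_0$ surplus $\mathbb Z$-summands are realized either by the same kind of $1$-handle summand (attached within a single boundary component) or, when $\partial\widehat P$ is empty, by $\sharp(S^2\times S^1)$ or $\sharp(S^2\tilde\times S^1)$ summands of $Q$. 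Collecting the (twisted) solid-torus summands into one connected pair $(R,\partial R)$ with $\partial R=\partial\widehat P$ (there being no $R$-term when there is no aspherical boundary) gives $(\widehat P,\partial\widehat P)\simeq Q\sharp(R,\partial R)$, and combining with the first step gives $(P,\partial P)\simeq Q\sharp(R,\partial R)\sharp(D_{m(P)},\partial D_{m(P)})$. The hard part will be exactly this bookkeeping: checking that, with no one-ended factors and every peripheral image free, the construction of Theorem~\ref{asph boundary} really collapses to ``a $PD_3$-complex summed with a connected sum of (twisted) solid tori'', that the $r$ boundary connected sums can be distributed among the several components of $\partial\widehat P$ so that the assembled pair has boundary precisely $\partial\widehat P$, and that the degenerate cases ($\partial\widehat P=\emptyset$, or $V=1$) fit the same pattern. (The auxiliary observation that capping or compressing can create new $S^2$ components --- the remark after Lemma~\ref{2-handle} --- is harmless, as Corollary~\ref{punct} is designed to sweep such spheres into the $(D_{m(P)},\partial D_{m(P)})$ factor.)
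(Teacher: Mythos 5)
Your proposal is correct and follows exactly the route the paper intends: the paper's entire ``proof'' is the one-line remark that Theorem~\ref{asph boundary} and Corollary~\ref{punct} together give the result, and you have simply supplied the details (no $RP^2$ components and free peripheral images since $w$ does not split and $\pi$ is virtually free, $\Gamma(\pi,\{\kappa_j\})$ edgeless so $s=0$, condition~(3) of the necessity half furnishing the free factor of rank $r+s$ needed for the converse, and the collapse of the realization construction to $Q\sharp(R,\partial R)$). The bookkeeping you flag as ``the hard part'' is left equally implicit in the paper, so there is nothing further to reconcile.
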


\begin{proof}
It is enough to show that $(\widehat{P},\partial\widehat{P})\simeq
{Q\sharp(R,\partial{R}})$ for some such $Q$ and $(R,\partial{R})$,
by Corollary \ref{punct}.
Since  $(\widehat{P},\partial\widehat{P})$ is peripherally torsion free
and $\pi$ is virtually free, 
the image $F_j$ of $\kappa_j$ is free of rank 
$r_j=\frac12\beta_1(\partial\widehat{P}_j)$, 
for all $j$, and so determines a geometric system $(F_j,\kappa_j')$,
where $\kappa_j':S_j\to{F_j}$ is the composite of $\kappa_j$ 
with projection onto a free factor of $\pi$.
These geometric systems may be realized by pairs $R_j$
which are boundary connected sums 
of pairs with fundamental group $\mathbb{Z}$.
Let $(R,\partial{R})$ be the interior connected sum of such pairs.

The subgroups $F_j$ are independent free factors of $\pi$, 
by Theorem \ref{asph boundary}, 
and so $\pi\cong\sigma*(*_j(F(r_j))$, where $\sigma$ is virtually free. 
Moreover, $\sigma$ is a free factor of $\pi_1(DP)$, 
where $DP$ is the double of $P$ along its boundary,
and so $\sigma\cong\pi_1(Q)$, for some $PD_3$-complex $Q$, by Tura'ev's Splitting Theorem. 
We may assume that $w_1(Q)=w|_\sigma$.
Putting all of the ingredients together gives our result, by Theorem
\ref{S^2 components}.
\end{proof}

See \cite[Chapters 6 and 7]{Hi} for the possibilities for $\pi_1(Q)$ in this case.
If $\pi$ is free then $(P,\partial{P})$ is homotopy equivalent to a 3-manifold pair.
In the light of Lemma \ref{vfree wnont}, 
``$w$ does not split" is probably needed only to exclude summands 
of the form $RP^2\times([0,1],\{0,1\})$ or $RP^2\times{S^1}$.

In general, we must expect that the $k$-invariant may be non-trivial,
and it is not clear how it should relate to the homological invariant $\mu$.
If $\pi$ is torsion free and $c.d.\pi>2$ then $c.d.\pi=3$, 
and $\pi_2(P)$ and $\pi_2(P')$ are projective, 
by the argument of the theorem.
However, $H^3(\pi;\mathbb{Z}[\pi])$ is non-trivial,
and we cannot conclude that the $k$-invariants correspond.

Theorem \ref{asph boundary} extends easily to peripheral systems corresponding to pairs for which $w$ does not split.
Since these have no $RP^2$ boundary components,
it suffices to first realize a pair with aspherical boundary components and then take the connected sum with $(D_m,\partial{D_m})$.

\section{$RP^2$ boundary components}

The strategy of Theorem \ref{S^2 components} should apply also 
when there are boundary components which are copies of $RP^2$, 
but we have not yet been able to identify the extension relating 
$\pi_2(P)$ to the peripheral data via duality.
If  $X$ is a cell complex and $f:RP^2\to{X}$ then $H^2(RP^2;f^*\pi_2(X))$ acts 
simply transitively on the set $[RP^2,X]_\theta$ of based homotopy
classes of based maps such that $\pi_1(f)=\theta$ \cite{Wh49}.
(Note that self-maps of $RP^2$ which induce the identity on $\pi_1$ 
lift to self maps of $S^2$ of {\it odd\/} degree,
and so the map $h\mapsto{h_*[RP^2]}$ from $[RP^2,RP^2]_{id}$
to $H_2(RP^2;\mathbb{Z}^w)$ is injective, but not onto.)
The corresponding summands of $H_2(\partial{P};\mathbb{Z}[\pi])$ have
the form $L_w=\mathbb{Z}[\pi]/\mathbb{Z}[\pi](w+1)$,
where $w$ is the image of the generator of the $RP^2$ in question,
and are not free $\mathbb{Z}[\pi]$-modules.
It is not clear how to determine the extension of $A$ by $\pi_2(\partial{P})$
giving $\Pi$.

Let $v\in\pi$ be such that $v^2=-1$ and $w(v)=-1$.
Then $\pi\cong\pi^+\rtimes\langle{v}\rangle$.
Let $\Gamma=\mathbb{Z}[\pi]$ and $\Gamma^\pm=\Gamma.(v\pm1)$.
Then $\Gamma^\pm\cong\Gamma/\Gamma^\mp$, and 
$f(\gamma)=(\gamma(v+1),\gamma(v-1))$ and
$g(\gamma(v+1),\delta(v-1))=\gamma(v+1)-\delta(v-1)$
define homomorphisms $f:\Gamma\to\Gamma^+\oplus\Gamma^-$
and $g:\Gamma^+\oplus\Gamma^-\to\Gamma$ such that $fg$ and $gf$ 
are multiplication by 2.

Using this near splitting of the group ring,
we can show that if $1<v.c.d.\pi<\infty$ then $Ext(A,L_w)$ has exponent 2,
while if $\pi$ is virtually free then $Ext(A,L_w)\cong\mathbb{Z}$
up to torsion of exponent 2.
We do not yet have a clear result.

\section{finite fundamental group}

A $PD_3$-complex $X$ with finite fundamental group is orientable,
and is determined by $\pi=\pi_1(X)$ and $k_2(X)\in{H^4(\pi;\mathbb{Z})}$,
which is now the first non-trivial $k$-invariant, since $\pi_2(X)=0$.
Here $\pi$ may be any finite group with cohomological period dividing 4,
and $k_2(P)$ may be any generator of $H^4(\pi;\mathbb{Z})\cong\mathbb{Z}/|G|\mathbb{Z}$ \cite{Wa}.
We shall show that every orientable $PD_3$-pair with finite 
fundamental group may be obtained by puncturing the top cell of 
a $PD_3$-complex with the same group.

\begin{lemma}
Let $(P,\partial{P})$ be a $PD_3$-pair such that $\pi=\pi_1(P)$ is finite.
Then the components of $\partial{P}$ are copies of $S^2$ or $RP^2$.
\end{lemma}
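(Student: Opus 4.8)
The statement is that a $PD_3$-pair $(P,\partial P)$ with $\pi=\pi_1(P)$ finite has every boundary component a copy of $S^2$ or $RP^2$. The plan is to show that each component $Y_j$ of $\partial P$ has finite fundamental group: since $Y_j$ is a closed surface (a $PD_2$-complex, which we have already arranged to be a genuine surface), and the only closed surfaces with finite $\pi_1$ are $S^2$ and $RP^2$, this suffices. So the whole content is: $\pi_1(Y_j)$ is finite for every $j$.

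First I would recall from \S1 that the peripheral homomorphism $\kappa_j:\pi_1(Y_j)\to\pi$ is geometric: its kernel $\mathrm{Ker}(\kappa_j)$ is normally generated by the image of a finite family of disjoint $2$-sided simple closed curves, and $\mathrm{Im}(\kappa_j)$ is a free product of $PD_2$-groups, copies of $\mathbb Z/2\mathbb Z$, and free groups. But $\mathrm{Im}(\kappa_j)$ is a subgroup of the finite group $\pi$, hence finite. A finite free product of nontrivial groups is infinite unless it has a single factor and that factor is finite; likewise $F(n)$ is infinite for $n>0$ and a $PD_2$-group is infinite. Therefore $\mathrm{Im}(\kappa_j)$ is either trivial or $\mathbb Z/2\mathbb Z$. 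Consequently $\mathrm{Ker}(\kappa_j)$ has index $1$ or $2$ in $\pi_1(Y_j)$, and it only remains to show $\mathrm{Ker}(\kappa_j)$ itself is finite — equivalently that $\pi_1(Y_j)$ is finite.

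For that, the key step is to use the geometric basis $\Phi_j$ together with the structure of surface groups recalled in \S3: if $\Phi_j$ is a geometric basis for $\mathrm{Ker}(\kappa_j)$ with $a$ components of the complement contributing $PD_2$-group factors and $r=r(\kappa_j)$ non-separating curves, then $\mathrm{Im}(\kappa_j)\cong(*_{p=1}^a S_p)*F(r)$ with $\sum_p\beta_1(S_p)=\beta_1(S_j)-2r$. Since $\mathrm{Im}(\kappa_j)$ is finite, we must have $a\le 1$, $r=0$, and any $S_p$ present must be finite — but finite $PD_2$-groups do not exist, so in fact $a=0$. Hence $\beta_1(S_j)=2r=0$, so $\pi_1(Y_j)$ has trivial abelianization rank; a closed surface with $\beta_1=0$ is $S^2$ or $RP^2$, and we are done.

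I expect the main (and only real) obstacle is bookkeeping around the degenerate cases: making sure the argument covers the $S^2$ boundary components (where $\kappa_j$ is the trivial homomorphism, $\Phi_j$ empty or a single inessential curve, and everything is vacuously fine) and the case where $Y_j$ is non-orientable, so that $\mathrm{Im}(\kappa_j)=\mathbb Z/2\mathbb Z$ arises legitimately from $RP^2$ with $\Phi_j$ a curve bounding a M\"obius band. These are exactly the two allowed outcomes, so no contradiction is sought there; one just has to phrase the trichotomy "$a=0,r=0$" cleanly. An alternative, slightly slicker route would be to pass to the orientable double cover pair or to note directly that $\chi(\partial P)=2\chi(P)$ forces each $\chi(Y_j)>0$ once one knows $\pi$ is finite and hence $\widetilde P$ has the homology of a finite complex with $H_3\neq0$, but the free-product argument above is the most self-contained given what has been set up.
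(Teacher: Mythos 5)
Your argument is correct, but it takes a genuinely different route from the paper's. The paper's proof is a two-line homological argument: since $\pi$ is finite, $H_1(P;\mathbb{Z})$ and $H^1(P;\mathbb{Z}^w)\cong H_2(P,\partial P;\mathbb{Z})$ (Poincar\'e duality) are both finite, so the long exact sequence of the pair forces $H_1(\partial P;\mathbb{Z})$ to be finite, and a closed surface with finite $H_1$ is $S^2$ or $RP^2$. You instead invoke the Algebraic Loop Theorem: each $\mathrm{Im}(\kappa_j)$ is a free product of $PD_2$-groups, copies of $\mathbb{Z}/2\mathbb{Z}$ and free groups, hence, being a subgroup of a finite group, is trivial or $\mathbb{Z}/2\mathbb{Z}$, and then the formula $\sum_p\beta_1(S_p)=\beta_1(S_j)-2r$ forces $\beta_1(S_j)\leq 1$, which no aspherical closed surface satisfies. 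The paper's route is more elementary (it needs nothing beyond duality and the exact sequence of the pair, and in particular does not need Crisp's theorem), while yours has the merit of identifying exactly which of the two outcomes occurs from the peripheral image ($1$ gives $S^2$, $\mathbb{Z}/2\mathbb{Z}$ gives $RP^2$). One caution on your bookkeeping: in the middle paragraph you write ``finite $PD_2$-groups do not exist, so in fact $a=0$, hence $\beta_1(S_j)=0$,'' which overlooks the allowed factor $S_1\cong\mathbb{Z}/2\mathbb{Z}$ (giving $a=1$, $\beta_1(S_j)=1$, i.e.\ $RP^2$ --- recall the paper's $\beta_1$ is the $\mathbb{F}_2$-Betti number, so $\beta_1(RP^2)=1$, not $0$); your closing paragraph shows you are aware of this case, but the trichotomy should be stated cleanly as $\mathrm{Im}(\kappa_j)\in\{1,\mathbb{Z}/2\mathbb{Z}\}$ giving $\beta_1(S_j)\in\{0,1\}$ respectively, versus $\beta_1\geq 2$ for any aspherical closed surface.
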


\begin{proof}
Since $\pi$ is finite,
$H_1(P;\mathbb{Z})$ and $H^1(P;\mathbb{Z}^w)$ are both finite,
and so $H_1(\partial{P};\mathbb{Z})$ is also finite.
\end{proof}

\begin{theorem}
Let $(P,\partial{P})$ be an orientable $PD_3$-pair with $\pi=\pi_1(P)$ finite.
Then $(P,\partial{P})\simeq\widehat{P}\sharp(D_{m(P)},\partial{D_{m(P)}})$,
where $\widehat{P}$ is a $PD_3$-complex.
\end{theorem}

\begin{proof}
Since $\pi$ is finite and $(P,\partial{P})$ is orientable, 
$\partial{P}=m(P)S^2$, and so $\widehat{P}=P\cup{m(P)D^3}$
is a $PD_3$-complex.
We may assume that $\partial{P}$ is non-empty, 
and so $\pi_2(P)\cong\Pi=\mathbb{Z}[\pi]^{m(P)}/\Delta(\mathbb{Z})$,
where $\Delta:\mathbb{Z}\to\mathbb{Z}[\pi]^{m(P)}$ is the ``diagonal" monomorphism.

If $\pi=1$ then $(P,\partial{P})\simeq(D_{m(P)},\partial{D_{m(P)}})$ \cite[\S3.5]{Hi}.
In general, $P$ is determined by $\pi$, $m(P)$ and $k_1(P)$.
These data also determine the inclusion of the boundary, 
and hence the homotopy type of the pair.

The $k$-invariant $k_1(P)$ is the extension class of the sequence
\[
0\to\mathbb{Z}[\pi]^{m(P)}/\Delta(\mathbb{Z})
\to{C_2}\to{C_1}\to\mathbb{Z}[\pi]\to\mathbb{Z}\to0
\]
in $H^3(\pi;\Pi)=Ext_{\mathbb{Z}[\pi]}^3(\mathbb{Z},\Pi)$.
The connecting homomorphism in the long exact sequence
associated to the coefficient sequence
\[
0\to\mathbb{Z}\to\mathbb{Z}[\pi]^{m(P)}\to\Pi\to0
\]
gives an isomorphism
$H^3(\pi;\Pi)\cong{H^4(\pi;\mathbb{Z})}=Ext_{\mathbb{Z}[\pi]}^4(\mathbb{Z},\mathbb{Z})$
which sends $k_1(P)$ to the extension class of the sequence
\[
0\to\mathbb{Z}\to\mathbb{Z}[\pi]^{m(P)}\to{C_2}\to{C_1}\to\mathbb{Z}[\pi]\to\mathbb{Z}\to0.
\]
This is $k_2(\widehat{P})$,
and so $k_1(P)=k_1(\widehat{P}\sharp(D_{m(P)})$
(up to the actions of $Aut(\pi)$ and $Aut_\pi(\Pi))$).
Hence $(P,\partial{P})\simeq(\widehat{P}\sharp(D_{m(P)},\partial{D_{m(P)}})$,
since they have isomorphic fundamental groups, the same number of boundary components
and equivalent first $k$-invariants.
\end{proof}

It is easy to show that a finite group $G$ with cohomological period
dividing 4 and having a {\it free\/} periodic resolution satisfies Turaev's condition 
for some choice of homology class $\mu$ \cite{Tu}.
Let 
\begin{equation*}
\begin{CD}
0\to\mathbb{Z}\to{C_3}\to{C_2}@>\partial_2>>{C_1}\to{C_0}\to\mathbb{Z}\to0
\end{CD}
\end{equation*}
be an exact sequence of $\mathbb{Z}[G]$-modules in which the $C_i$ are 
finitely generated free modules.
Then the $\mathbb{Z}$-linear dual of this sequence is also exact.
Composition with the additive function $c:\mathbb{Z}[G]\to\mathbb{Z}$
given by $c(\Sigma{n_g}g)=n_1$ defines natural isomorphisms
$M^\dagger=\overline{Hom_{\mathbb{Z}[G]}(M,\mathbb{Z}[G])}\cong
{Hom_\mathbb{Z}(M,\mathbb{Z})}$,
and so the $\mathbb{Z}$-linear dual of the complex $C_*$ is also the 
$\mathbb{Z}[G]$-linear dual of $C_*$.
A Schanuel's Lemma argument then shows that
$\mathrm{Cok}(\partial_2)$ and $\mathrm{Cok}(\partial_2^\dagger)$
are stably isomorphic.
However the standard construction of a $PD_3$-complex realizing $G$ 
(as in \cite{Wa}) is more direct than one involving an appeal to that theorem.

There is only one non-orientable example (up to punctures).

\begin{theorem}
\label{nonor finite}
Let $(P,\partial{P})$ be a $PD_3$-pair with $\pi=\pi_1(P)$ finite and 
which is not orientable.
Then
$\pi\cong\mathbb{Z}/2\mathbb{Z}$ and 

$(P,\partial{P})\simeq
(RP^2\times([0,1],\{0,1\}))\sharp(D_{m(P)},\partial{D_{m(P)}})$.
\end{theorem}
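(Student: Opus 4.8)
The plan is to show first that the only non-orientable finite group that can occur as $\pi_1$ of a $PD_3$-pair is $\mathbb{Z}/2\mathbb{Z}$, and then to identify the pair. By the preceding lemma, $\partial{P}$ consists of copies of $S^2$ and $RP^2$, and since the pair is not orientable, $w=w_1(P)$ is non-trivial; as $RP^2$ is the only non-orientable $PD_2$-complex, the non-spherical part of $\partial{P}$ is a disjoint union of copies of $RP^2$. Because $w$ is non-trivial on $\pi$ and $w$ restricts to the orientation character of each boundary component, at least one such $RP^2$ must be present (otherwise $\partial{P}=m(P)S^2$ forces the image of $[P,\partial{P}]$ in $H_2(\partial{P};\mathbb{Z}^w)$ to see only trivial-$w$ components, contradicting non-orientability), so the inclusion of an $RP^2$ splits $w$. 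Thus $\pi$ has an element $v$ with $v^2=1$ and $w(v)=-1$. I would then invoke the standard fact that a finite group which is the fundamental group of a $PD_3$-complex has cohomological period dividing $4$; passing to the double $DP$ (or the orientation cover and a Mayer--Vietoris argument), one sees $\pi$ is a retract of a finite group with periodic cohomology, hence $\pi$ itself has periodic cohomology. A finite group with periodic cohomology containing an element of order $2$ has a unique subgroup of order $2$ (the $2$-Sylow subgroup is cyclic or quaternionic), and the element $v$ above generates it. The key extra constraint is the \emph{Algebraic Loop Theorem} applied to the $RP^2$ component: an $RP^2$ boundary component has no essential simple closed curves that are orientation-preserving, so $\kappa$ is automatically injective on $\pi_1(RP^2)\cong\mathbb{Z}/2\mathbb{Z}$, giving $\langle v\rangle\cong\mathbb{Z}/2\mathbb{Z}$ as the peripheral subgroup.

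Next I would pin down $\pi$ exactly. The orientation cover $(P^+,\partial{P}^+)$ has $\pi^+=\mathrm{Ker}(w)$ of index $2$, is orientable, and has finite fundamental group; each $RP^2$ in $\partial{P}$ lifts to a single $S^2$ in $\partial{P}^+$ (since $w$ is non-trivial on its $\pi_1$), and each $S^2$ lifts to two $S^2$s. An Euler characteristic computation using $\chi(\partial{P})=2\chi(P)$, together with $\chi(P^+)=2\chi(P)$ and $\chi(\partial{P}^+)=2\chi(P^+)$, forces $\chi(P^+)=0$ once the boundary is non-empty, so $\chi(P)=0$ and the orientable case just proved shows $P^+\simeq\widehat{P^+}\sharp(D_{m},\partial{D_m})$ with $\widehat{P^+}$ a closed orientable $PD_3$-complex of finite fundamental group; but $\chi$ of a closed $PD_3$-complex is $0$, consistent. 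The real input is: the unique involution $v$ is orientation-reversing, and for a finite group with periodic cohomology whose $2$-part is cyclic of order $2$, if in addition the pair is to carry a boundary $RP^2$ with the correct fundamental class, a direct homological argument (comparing $H_*(P;\mathbb{Z}[\pi])$ with the peripheral data, as in the proof of Theorem~\ref{S^2 components} and \S8) forces $\pi^+=1$, hence $\pi\cong\mathbb{Z}/2\mathbb{Z}$. I would give this the same way the $RP^2\times S^1$ uniqueness is argued in \cite{Wa}: a finite-dimensional $K(\pi,1)$ does not exist when $\pi$ has torsion, so asphericity of any potential aspherical summand is excluded, and the only possibility left with $w$ split and $\pi$ finite with cyclic $2$-part is $\pi=\mathbb{Z}/2\mathbb{Z}$.

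Finally, with $\pi\cong\mathbb{Z}/2\mathbb{Z}$, I would identify the pair up to punctures. Let $\widehat{P}$ be the result of capping off all $S^2$ boundary components of $P$; then $(P,\partial{P})\simeq(\widehat{P},\partial\widehat{P})\sharp(D_{m(P)},\partial{D_{m(P)}})$, by the argument of Corollary~\ref{punct} (the case $c.d.\pi\leq 2$ does not apply, but the periodic/virtually-free bookkeeping of $H^i(\pi;\mathbb{Z}[\pi])$ still lets the $k$-invariants be compared, since $\pi=\mathbb{Z}/2\mathbb{Z}$ is finite and $\pi_2$ of the punctured model is the explicit module $\mathbb{Z}[\pi]^{m}/\Delta(\mathbb{Z})$ as in the previous theorem). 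So it suffices to treat $(\widehat{P},\partial\widehat{P})$, which has boundary a non-empty disjoint union of copies of $RP^2$ only. Now $\chi(\partial\widehat{P})=2\chi(\widehat{P})$ gives $|\partial\widehat{P}|=2\chi(\widehat{P})$, and since $\widehat{P}$ retracts onto a point times $RP^2$-type data and $\pi=\mathbb{Z}/2\mathbb{Z}$ acts, a count of $H_*(\widehat P;\mathbb{F}_2)$ against the boundary forces exactly two $RP^2$ components. With two $RP^2$ boundary components, $\pi=\mathbb{Z}/2\mathbb{Z}$, $w$ non-trivial, and $\widehat{P}$ necessarily aspherical-rel-the-$2$-torsion in the appropriate sense, the homotopy type is rigid: $RP^2\times([0,1],\{0,1\})$ realizes this peripheral system and $k$-invariant data, and by the classification input (Theorem~\ref{S^2 components} and \cite{Wh49} for the $RP^2$ mapping set) any other pair with the same invariants is homotopy equivalent to it. The main obstacle I anticipate is the step forcing $\pi^+=1$: the potential non-triviality of the $RP^2$-extension relating $\pi_2(\widehat P)$ to the peripheral data (exactly the gap flagged in \S8) means one cannot simply quote Theorem~\ref{S^2 components}, and one must instead rule out larger periodic $\pi$ by a Euler-characteristic-plus-covering-space argument showing any such pair would force an impossible finite-dimensional $K(\pi,1)$ or violate $\chi(\partial P)=2\chi(P)$.
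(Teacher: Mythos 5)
Your reduction of the boundary to copies of $S^2$ and $RP^2$, and the observation that a boundary $RP^2$ splits $w$, agree with the paper, but the crucial step --- showing $\pi^+=\mathrm{Ker}(w)=1$, i.e.\ $\pi\cong\mathbb{Z}/2\mathbb{Z}$ --- has a genuine gap, and the tools you propose for it do not work. The relation $\chi(\partial P)=2\chi(P)$ does not force $\chi(P^+)=0$: for $\partial P=rRP^2$ it gives only that $r$ is even, and it is consistent with every even $r$ and every order of $\pi^+$. There is also no aspherical summand in sight whose nonexistence could be invoked. Moreover, the claim that a finite group with periodic cohomology and an element of order $2$ has a unique subgroup of order $2$ is false: dihedral groups of order $2n$ with $n$ odd (e.g.\ $S_3$ with $w$ the sign character) have cohomological period $4$ and several involutions on which $w=-1$, and nothing in your argument excludes such $\pi$. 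The paper's mechanism is different and essential: after reducing to $\partial P=rRP^2$, pass to the (irregular) cover $(Q,\partial Q)$ corresponding to the order-two image of $\pi_1(RP^2)$ in $\pi$; then $\partial Q$ consists of $r|\pi^+|$ copies of $RP^2$, the double $DQ$ is a non-orientable $PD_3$-\emph{complex} with $\pi_1(DQ)\cong F(s)\times\mathbb{Z}/2\mathbb{Z}$ where $s=r|\pi^+|-1$, and Crisp's Centralizer Theorem forces $s\leq1$, whence $r=2$ and $\pi^+=1$. This doubling-plus-Centralizer-Theorem step is the missing idea.

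The final identification is also left incomplete. You rightly note that Theorem \ref{S^2 components} does not apply because of the $RP^2$ boundary components, but ``the homotopy type is rigid by the classification input'' is not an argument. The paper instead computes $\pi_2(P)\cong\mathbb{Z}[\pi]^{m(P)}\oplus\mathbb{Z}^-$, observes that the inclusions of the $S^2$ components together with one $RP^2$ already give a homotopy equivalence $(\vee^{m(P)}S^2)\vee RP^2\simeq P$, and then pins down the remaining boundary inclusion $h$ by showing that the action of $H^2(RP^2;\theta^*\pi_2(P))$ on $[RP^2,P]_\theta$ changes $b_*[RP^2]$ by $2\rho(x)$, so that $b\mapsto b_*[RP^2]$ is injective and $h$ is determined by the boundary compatibility condition. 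Some such explicit computation is required; it is exactly the difficulty your \S8-style worry identifies but does not resolve.
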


\begin{proof}
Since $\pi$ is finite, the boundary components must be either $S^2$ or $RP^2$.
Suppose first that $m(P)=0$.
Then $\partial{P}=rRP^2$ for some $r$, 
which must be even since $\chi(\partial{P})=2\chi(P)$
and strictly positive since $P$ is non-orientable.
The inclusion $\iota$ of a boundary component splits the orientation character,
and so $\pi\cong\pi^+\rtimes\mathbb{Z}/2\mathbb{Z}$.
Let $Q$ be the (irregular) covering space with fundamental group 
$\mathrm{Im}(\pi_1(\iota))$, 
and let $\partial{Q}$ be the preimage of $\partial{P}$ in $Q$.
Then $\partial{Q}=r|\pi^+|RP^2$ and $(Q,\partial{Q})$ is a $PD_3$-pair.
Let $DQ=Q\cup_{\partial{Q}}Q$ be the double of $Q$ along its boundary.
Then $DQ$ is a non-orientable $PD_3$-complex and 
$\pi_1(DQ)\cong{F(s)}\times\mathbb{Z}/2\mathbb{Z}$,
where $s=r|\pi^+|-1$.
Since $r\geq2$ and $s\leq1$, by the Centralizer Theorem of Crisp \cite{Cr00},
we must have $r=2$ and $\pi^+=1$.

We now allow $\partial{P}$ to have $S^2$ components.
On applying the paragraph above to the pair obtained by capping these off, 
we see that $\partial{P}$ has two $RP^2$ components and 
$\pi\cong\mathbb{Z}/2\mathbb{Z}$.
Hence $P^+\simeq\vee^{2m(P)+1}S^2$ and 
$\pi_2(P)\cong\mathbb{Z}[\pi]^{m(P)}\oplus\mathbb{Z}^-$.
The inclusion of the $S^2$ boundary components and one of the two $RP^2$
boundary components induces a homomotopy equivalence
$(\vee^{m(P)}S^2)\vee{RP^2}\simeq{P}$.
The inclusion of the other boundary component then corresponds to a class in 
$H^2(RP^2;\pi_2(P))\cong\mathbb{Z}^{m(P)+1}$.

Let $h:RP^2\to{P}$ be the inclusion of the other boundary component.
Then $\theta=\pi_1(h)$ is an isomorphism,
and composition of Poincar\'e duality for $RP^2$ with the Hurewicz homomorphism 
for $P$ gives an isomorphism $\rho:H^2(RP^2;\theta^*\pi_2(P))\to{H_2}(P;\mathbb{Z}^w)$.
The group $H^2(RP^2;\theta^*\pi_2(P))\cong\mathbb{Z}^{m(P)+1}$ 
acts simply transitively on $[RP^2,P]_\theta$.
If $b:RP^2\to{P}$ is a $\pi_1$-injective map and $x.b$ is the map 
obtained by the action of $x\in{H^2(RP^2;\theta^*\pi_2(P))}$ then
$(x.b)_*[RP^2]=b_*[RP^2]+2\rho(x)$.
Hence $b\mapsto{b_*[RP^2]}$ is injective,
and so $h$ is determined by the boundary compatibility condition,
that $h_*[RP^2]$ be the negative of the sum of the images of 
the fundamental classes of the other boundary components.
\end{proof}

\section{when does the group determine the pair?}

If $G$ is a finitely generated group then there are at most finitely many 
homeomorphism types of bounded, compact, irreducible orientable 3-manifolds $M$ 
such that $\pi_1(M)\cong{G}$, and there only finitely many pairs 
$(G,\{\kappa_j\})$ 
which are peripheral systems of 3-manifold pairs.
These results are consequences of the Johannson Deformation Theorem
\cite{Sw80}.
Are there analogues for $PD_3$-pairs?
If $(P,\partial{P})$ is a $PD_3$-pair are there only finitely many homotopy types 
of $PD_3$-pairs $(Q,\partial{Q})$ with aspherical boundary
such that $\pi_1(Q)\cong\pi_1(P)$?

This appears to be not known even when $P$ is aspherical and the boundary
is $\pi_1$-injective, although an analogue of Johannson's Deformation Theorem
for $PD_3$-group pairs has recently been proven \cite{RSS20}.
We can answer this question in one rather special case.
The peripheral system $(\pi,\{\kappa_j\})$ of a $PD_3$-pair is {\it atoroidal\/}
if every $\mathbb{Z}^2$ subgroup of $\pi$ is conjugate into
$\mathrm{Im}(\kappa_j)$, for some $j$, 
and $\pi$ is not virtually $\mathbb{Z}^2$.

\begin{theorem}
Let $(P,\partial{P})$ be an orientable $PD_3$-pair such that $P$ is aspherical,
$\pi=\pi_1(P)$ has one end and $\chi(P)=0$.
Assume that the peripheral system of the pair is atoroidal.
Then any $PD_3$-pair $(Q,\partial{Q})$ with aspherical boundary and
such that $\pi_1(Q)\cong\pi_1(P)$ is homotopy equivalent to $(P,\partial{P})$.
\end{theorem}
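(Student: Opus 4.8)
The goal is to show that the homotopy type of $(P,\partial P)$ is forced by $\pi$ alone, under the hypotheses $P$ aspherical, $\pi$ one-ended, $\chi(P)=0$, and the peripheral system atoroidal. The strategy is to recover the peripheral system $(\pi,\{\kappa_j\})$ from $\pi$ and then invoke the Classification Theorem of Bleile \cite{Bl} for pairs with aspherical boundary. First I would observe that since $P$ is aspherical with non-empty aspherical boundary, $c.d.\pi=2$ and the peripheral system is $\pi_1$-injective; moreover $\pi=\pi_1(P)$ is a $PD_3$-group pair in the sense of \cite{RSS20}, i.e.\ an orientable $PD_3$-pair group. The condition $\chi(P)=0$ together with $\chi(\partial P)=2\chi(P)=0$ forces every boundary component to be a torus (an aspherical closed orientable surface of Euler characteristic $0$), so each $\kappa_j$ embeds a copy of $\mathbb{Z}^2$.

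**Key steps.** The heart of the argument is that the collection of peripheral subgroups $\{B_j\}$ is intrinsic to $\pi$. By the atoroidal hypothesis, every $\mathbb{Z}^2$ subgroup of $\pi$ is conjugate into some $B_j$, and each $B_j\cong\mathbb{Z}^2$; so the $B_j$ are exactly the maximal $\mathbb{Z}^2$ subgroups of $\pi$, up to conjugacy. I would argue that there are only finitely many conjugacy classes of maximal $\mathbb{Z}^2$ subgroups — this follows because $\pi$ is $FP_2$ of cohomological dimension $2$ and (invoking the $PD_3$-pair structure) the peripheral subgroups are precisely the conjugacy classes detected by $H^2(\pi;\mathbb{Z}[\pi])$ being non-finitely-generated along those ends, or more directly from the accessibility/JSJ machinery for $PD_3$-pair groups in \cite{RSS20}. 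Thus from $\pi$ one reads off the set $\{[\kappa_j]\}$ of conjugacy classes of inclusions $\mathbb{Z}^2\hookrightarrow\pi$, and these must agree (as an unordered set, up to conjugacy and automorphisms of the $\mathbb{Z}^2$'s) for both $(P,\partial P)$ and any $(Q,\partial Q)$ with $\pi_1(Q)\cong\pi_1(P)$.

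**Finishing.** Given $(Q,\partial Q)$ with aspherical boundary and $\pi_1(Q)\cong\pi$: since $\pi$ has one end, $Q$ is aspherical, $c.d.\pi=2$, and the peripheral system of $Q$ is $\pi_1$-injective. Its boundary components are aspherical surfaces whose fundamental groups inject into $\pi$; every such surface group contains a $\mathbb{Z}^2$ unless it is free, but $\pi_1$-injective boundary components of a $PD_3$-pair are $PD_2$-groups, hence surface groups, and a non-toroidal one would contain a $\mathbb{Z}^2$ not conjugate into any peripheral subgroup of $(P,\partial P)$ — contradicting that $Q$ and $P$ have the same maximal-$\mathbb{Z}^2$ data. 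So $\partial Q$ is a union of tori and the peripheral system of $Q$ is isomorphic to that of $P$ as a group system. Finally, the fundamental triple of an aspherical pair with $\pi_1$-injective boundary is determined by the peripheral group system alone (the class $\mu$ is the unique fundamental class compatible with the boundary, by asphericity and the boundary compatibility condition, arguing on $H_3(\pi,\{\kappa_j\};\mathbb{Z})\cong\mathbb{Z}$), so Bleile's Classification Theorem \cite{Bl} gives $(Q,\partial Q)\simeq(P,\partial P)$.

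**Main obstacle.** The delicate point is Step 2: showing that the atoroidal peripheral system can actually be recovered purely group-theoretically from $\pi$ — in particular that the maximal $\mathbb{Z}^2$ subgroups are finite in number up to conjugacy and are \emph{exactly} the peripheral subgroups (not some larger collection of ``hidden'' tori). This is where I expect to lean hardest on the $PD_3$-pair group JSJ theory of \cite{RSS20}, or alternatively on an Eichler-type/Thurston-type rigidity input; a naive cohomological argument may not immediately rule out extra conjugacy classes of $\mathbb{Z}^2$'s arising from the peripheral tori under reparametrization, so care is needed to phrase ``recovering $\{\kappa_j\}$'' as recovering an unordered set of conjugacy classes up to the automorphisms $GL_2(\mathbb{Z})$ of each $\mathbb{Z}^2$, which is exactly the data Bleile's theorem consumes.
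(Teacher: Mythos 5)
Your overall strategy is the same as the paper's (torus boundary from $\chi(P)=0$, atoroidality to conjugate the peripheral subgroups of $Q$ into those of $P$, then match the peripheral systems and conclude), but three steps have genuine problems. First, the claim that ``every such surface group contains a $\mathbb{Z}^2$ unless it is free'' is false: a closed aspherical surface group of genus $\geq2$ is hyperbolic and contains no $\mathbb{Z}^2$, so this cannot be how higher-genus components of $\partial{Q}$ are excluded. The correct route is the one you used for $\partial{P}$: since $\pi$ has one end and $\partial{Q}$ is aspherical, $Q$ is aspherical, hence $Q\simeq{P}$ and $\chi(\partial{Q})=2\chi(Q)=0$, so the components of $\partial{Q}$ are tori \emph{or Klein bottles}. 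Second, this brings up the point you miss entirely: $Q$ is not assumed orientable, so Klein bottle components must be ruled out. The paper does this by first setting aside the $I$-bundle case (so $\pi$ is not a $PD_2$-group) and then invoking \cite[Lemma 2.2]{KR88}: the peripheral subgroups of $P$ are their own commensurators in $\pi$ and are pairwise non-conjugate; a Klein bottle group would properly commensurate the $\mathbb{Z}^2$ it contains, which by atoroidality is conjugate to a peripheral subgroup, a contradiction. Third, you assert that the peripheral system of $Q$ is isomorphic to that of $P$ ``as a group system'' without addressing multiplicities: a priori several components of $\partial{Q}$ could map to the same conjugacy class of peripheral subgroup of $P$, or some peripheral subgroup of $P$ could fail to be hit. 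The paper closes this gap using the boundary compatibility condition: the images of the fundamental classes of the boundary components in $H_2(\pi;\mathbb{F}_2)$ have sum $0$ and span a subspace of dimension $\beta_0(\partial{P})-1$, and the same holds for $Q$, forcing a bijective correspondence.

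On the other hand, the obstacle you identify as the ``delicate point'' is not actually needed. You do not have to recover the peripheral system abstractly from $\pi$ via the JSJ machinery of \cite{RSS20} or prove finiteness of conjugacy classes of maximal $\mathbb{Z}^2$ subgroups: you are handed the peripheral system of $Q$ as part of the data of the pair, and you only need to compare it with that of $P$. Atoroidality conjugates each boundary subgroup of $Q$ into some peripheral subgroup of $P$, and maximality together with the commensurator lemma upgrades this to equality of conjugacy classes; no accessibility or deformation theorem is required.
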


\begin{proof}
Since $P$ is aspherical and orientable, $\pi$ has one end and $\chi(P)=0$,
the components of $\partial{P}$ are tori.
Since $\pi$ has one end and the boundary of $Q$ is aspherical,
$Q$ is aspherical, and its peripheral system is $\pi_1$-injective. 
(These assertions follow immediately on considering the exact sequence
relating $\Pi$ and $\overline{E)(\pi)}$ in \S3 above.)
In particular, $Q\simeq{P}$, and so $\chi(Q)=\chi(P)=0$.
Since $\chi(\partial{Q})=2\chi(Q)=0$,
the components of $\partial{Q}$ are tori or Klein bottles.
Every $\mathbb{Z}^2$ subgroup of $\pi$ is conjugate into 
the image of a boundary component of $P$,
since the peripheral system of $(P,\partial{P})$ is atoroidal.

The pair $(P,\partial{P})$ is not of $I$-bundle type,  
since $\chi(\pi)=0$ and $\pi$ is not virtually $\mathbb{Z}^2$,
and so the images of the boundary components are 
their own commensurators in $\pi$,
and are pairwise non-conjugate \cite[Lemma 2.2]{KR88}.
Hence $Q$ is not of $I$-bundle type either,
its boundary has no Klein bottle components,
and the images of the boundary components of $P$ and of $Q$ are 
maximal free abelian subgroups of $\pi$.
Hence the images of the components of $\partial{Q}$ in $\pi$ are conjugate to 
images of boundary components of $P$.
The images of the fundamental classes of the boundary components of $P$
in $H_2(\pi;\mathbb{F}_2)$ have sum 0 and generate a subspace of dimension 
$\beta_0(\partial{P})-1$,  
by the boundary compatibility condition.
Since the same holds for the boundary components of $Q$, 
the boundaries must correspond bijectively,
and so a homotopy equivalence $P\simeq{Q}$ induces a homotopy equivalence of pairs.
\end{proof}

The exteriors of the reef knot $3_1\#-3_1$
and the granny knot $3_1\#3_1$ provide a well-known example
of a pair of aspherical orientable 3-manifolds with isomorphic fundamental groups,  but which are not homotopy equivalent {\it rel\/} boundary.
There are simpler examples if we allow non-orientable pairs.
If $F$ is an aspherical closed orientable surface and $M$ is 
the total space of a non-trivial $I$-bundle over $F$ then $M\simeq{F}$,
but $F\times(I,\partial{I})$ and $(M,\partial{M})$ are not homotopy
equivalent as pairs.
(In fact $\partial{M}$ is connected and $(M,\partial{M})$ is non-orientable.)

\newpage

\end{document}